\documentclass[english]{article} 
\usepackage[absolute,overlay]{textpos} 
\usepackage{amsthm}
\usepackage{amsmath}
\usepackage{amssymb}
\usepackage[utf8]{inputenc}
\usepackage[english]{babel}
\usepackage{graphicx}
\usepackage{geometry}
\usepackage{caption}
\usepackage{subcaption}
\usepackage{pdfpages}
\usepackage{comment}
\usepackage{bm}
\usepackage{etoolbox}
\usepackage{mathtools}
\usepackage{hyperref}
\usepackage{tikz}
\usepackage{pgfplots}
\usepackage{pgfplotstable}
\usepackage{stmaryrd}

\usepackage{cleveref}

\usetikzlibrary{fillbetween}
\usepgfplotslibrary{fillbetween}

\usetikzlibrary{shapes.misc}
\usetikzlibrary{math} 

\usetikzlibrary{arrows}

\definecolor{darkspringgreen}{rgb}{0., 0.55, 0.3}
\definecolor{dartmouthgreen}{rgb}{0.05, 0.5, 0.06}
\definecolor{etonblue}{rgb}{0.59, 0.78, 0.64}
\definecolor{airforceblue}{rgb}{0., 0.4, 0.66}
\definecolor{arylideyellow}{rgb}{0.91, 0.84, 0.42}
\definecolor{emerald}{rgb}{0.31, 0.78, 0.47}
\definecolor{uclagold}{rgb}{1.0, 0.7, 0.0}
\definecolor{cadmiumorange}{rgb}{0.93, 0.53, 0.18}

\newtheorem{theorem}{Theorem}
\numberwithin{theorem}{section}

\numberwithin{definition}{section}
\newtheorem{observation}{Remark}
\numberwithin{observation}{section}
\newtheorem{proposition}[theorem]{Proposition}
\newtheorem{lemma}[theorem]{Lemma}
\newtheorem{assumption}[theorem]{Assumption}

\newcommand{\lopd}[0]{\mathcal{L}_\Delta}
\newcommand{\lopdt}[0]{\mathcal{L}_{\Delta}}
\newcommand{\usol}[0]{\underline{\uvec{u}}_\Delta}
\newcommand{\usoldt}[0]{\underline{\uvec{u}}_\Delta}

\newcommand{\uapp}[0]{\uvec{u}_h}

\newcommand{\tess}[0]{\mathcal{T}_h}

\newcommand{\uvec}[2][3]{\boldsymbol{#2\mkern-#1mu}\mkern#1mu}

\newcommand\norm[1]{\left\lVert#1\right\rVert}

\newcommand{\CIP}{\text{CIP}}

\newcommand{\CFL}{\text{CFL}}

\newcommand{\spacestuff}[0]{\bm{\phi}_i}

\newcommand{\cund}[0]{\underline{\uvec{c}}}

\newcommand{\lopdi}[0]{\mathcal{L}_{\Delta,i}}

\newcommand{\R}{\mathbb{R}}
\newcommand{\dt}{\Delta t}

\renewcommand{\div}{\textrm{div}}
\newcommand{\x}{\uvec{x}}

\DeclarePairedDelimiter\abs{\lvert}{\rvert}

\begin{document}
\author{L. Micalizzi\footnote{Corresponding author. Affiliation: Institute of Mathematics, University of Zurich, Winterthurerstrasse 190, Zurich, 8057, Switzerland. Email: lorenzo.micalizzi@math.uzh.ch.} and D. Torlo\footnote{Affiliation: SISSA mathLab, SISSA, via Bonomea 265, Trieste, 34136, Italy. Email: davide.torlo@sissa.it.}}
\title{A new efficient explicit Deferred Correction framework: \\
analysis and applications to hyperbolic PDEs and adaptivity\\ Supplementary Material\footnote{Main document submitted to Communications on Applied Mathematics and Computation.}}

\maketitle

\section*{Introduction}
In this supplementary material, we show the proofs and the details that were too lengthy to be put in the principal manuscript. We show the proof of the Deferred Correction procedure in a general framework in \cref{chapDECAbstract}. In \cref{chapDECODE}, we provide the proofs of the accuracy and of the properties of the operators $\lopdt^2$ and $\lopdt^1$ of the bDeC method in the context of ODEs, and we show how the sDeC method can be seen as a perturbation of the bDeC.  In \cref{sec:CG}, we prove the properties of the operators $\lopdt^2$ and $\lopdt^1$ of the bDeC formulation for the continuous Galerkin (CG) finite element framework and we investigate the issues experienced in many works with such formulation. Finally, in \cref{sec:vibrating}, we show how to find the analytical solution to the ODE modeling a monodimensional vibrating system.

For each section, we recall the basic notions of the main document needed for the discussion, in order to make this document as much self-contained as possible, and sometimes deepened, in order to increase the understandability. 

\section{Abstract DeC formulation}\label{chapDECAbstract}
Assume that we have two operators, depending on the same parameter $\Delta$, between two normed vector spaces $\left(X, \norm{\cdot}_X \right)$ and $\left(Y, \norm{\cdot}_Y \right)$
\begin{equation}
\lopd^1,\lopd^2:X \longrightarrow Y,
\end{equation}
associated to two discretizations of the same problem.
Then, the following theorem holds.
\begin{theorem}[Deferred Correction accuracy]\label{th:DeC}
Let the following hypotheses hold
\begin{enumerate}
\item \textbf{Existence of a unique solution to $\lopd^2$} \\
$\exists ! \,\usol \in X$ solution of $\lopd^2$ such that $\lopd^2(\usol)=\uvec{0}_Y$;
\item \textbf{Coercivity-like property of $\lopd^1$} \\
$\exists \,\alpha_1 \geq 0$ independent of $\Delta$ s.t. 
\begin{equation}
	\norm{\lopd^1(\underline{\uvec{v}})-\lopd^1(\underline{\uvec{w}})}_Y\geq \alpha_1\norm{\underline{\uvec{v}}-\underline{\uvec{w}}}_X, \quad \forall \underline{\uvec{v}},\underline{\uvec{w}}\in X;
\end{equation}
\item \textbf{Lipschitz-continuity-like property of $\lopd^1-\lopd^2$} \\
$\exists\, \alpha_2 \geq 0$ independent of $\Delta$ s.t. 
\begin{equation}
	\norm{\left(\lopd^1(\underline{\uvec{v}})-\lopd^2(\underline{\uvec{v}})\right)-\left(\lopd^1(\underline{\uvec{w}})-\lopd^2(\underline{\uvec{w}})\right)}_Y\leq \alpha_2 \Delta \norm{\underline{\uvec{v}}-\underline{\uvec{w}}}_X, \quad\forall \underline{\uvec{v}},\underline{\uvec{w}}\in X.
\end{equation}
\end{enumerate}
Then, if we iteratively define $\underline{\uvec{u}}^{(p)}$ as the solution of
\begin{equation}
\label{DeC}
\lopd^1(\underline{\uvec{u}}^{(p)})=\lopd^1(\underline{\uvec{u}}^{(p-1)})-\lopd^2(\underline{\uvec{u}}^{(p-1)}), \quad p=1,\dots,P,
\end{equation}
we have that
\begin{equation}
\label{accuracyestimate}
\norm{\underline{\uvec{u}}^{(P)}-\usol}_X \leq \left( \Delta \frac{\alpha_2}{\alpha_1} \right)^P\norm{\underline{\uvec{u}}^{(0)}-\usol}_X.
\end{equation}
\end{theorem}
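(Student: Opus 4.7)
The plan is a one-line induction on $p$, built from three ingredients: the coercivity-like lower bound on $\lopd^1$, the update rule \eqref{DeC}, and the Lipschitz-like property of $\lopd^1-\lopd^2$. The trick that makes everything go through is the identity $\lopd^2(\usol)=\uvec{0}_Y$, which lets us subtract $\lopd^1(\usol)-\lopd^2(\usol)$ for free inside the $Y$-norm so as to produce a difference to which hypothesis (3) applies.

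First I would fix $p\in\{1,\dots,P\}$ and start from the coercivity hypothesis applied to $\underline{\uvec{v}}=\underline{\uvec{u}}^{(p)}$ and $\underline{\uvec{w}}=\usol$, obtaining
\begin{equation*}
\alpha_1\,\bigl\|\underline{\uvec{u}}^{(p)}-\usol\bigr\|_X \;\leq\; \bigl\|\lopd^1(\underline{\uvec{u}}^{(p)})-\lopd^1(\usol)\bigr\|_Y.
\end{equation*}
Then I would rewrite $\lopd^1(\underline{\uvec{u}}^{(p)})$ via the DeC update \eqref{DeC} and insert the zero term $\lopd^2(\usol)$ coming from hypothesis (1), to recognize the right-hand side as exactly
\begin{equation*}
\bigl(\lopd^1(\underline{\uvec{u}}^{(p-1)})-\lopd^2(\underline{\uvec{u}}^{(p-1)})\bigr)-\bigl(\lopd^1(\usol)-\lopd^2(\usol)\bigr).
\end{equation*}
Applying hypothesis (3) to this difference yields $\alpha_1\|\underline{\uvec{u}}^{(p)}-\usol\|_X\leq \alpha_2\Delta\,\|\underline{\uvec{u}}^{(p-1)}-\usol\|_X$, i.e.\ a contraction by the factor $\Delta\alpha_2/\alpha_1$ at each correction step.

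Finally, a straightforward induction on $p$ from $0$ to $P$ gives the stated estimate \eqref{accuracyestimate}. No real obstacle arises: the only subtlety worth flagging is that the argument silently requires $\alpha_1>0$ to divide (the case $\alpha_1=0$ is either vacuous or forces $\alpha_2\Delta=0$ for the bound to make sense), and that the zero-insertion trick relies critically on the existence part of hypothesis (1) rather than on uniqueness; uniqueness is used only to interpret $\usol$ as a well-defined target.
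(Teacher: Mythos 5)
Your proposal is correct and follows essentially the same route as the paper's proof: coercivity applied to $\underline{\uvec{u}}^{(p)}$ and $\usol$, substitution of the update rule, insertion of the zero term $\lopd^2(\usol)$, application of the Lipschitz-like property, and recursion. Your remark that the argument implicitly needs $\alpha_1>0$ (despite the hypothesis only stating $\alpha_1\geq 0$) is a valid observation that the paper glosses over.
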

\begin{proof}
By using the coercivity-like property of $\lopd^1$ and the definition of $\lopd^1(\underline{\uvec{u}}^{(p)})$ in \eqref{DeC}, we have
\begin{equation}
\label{eq:2}
\norm{\underline{\uvec{u}}^{(P)}-\usol}_X\leq \frac{1}{\alpha_1}  \norm{\lopd^1(\underline{\uvec{u}}^{(P)})-\lopd^1(\usol)}_Y= \frac{1}{\alpha_1}  \norm{\lopd^1(\underline{\uvec{u}}^{(P-1)})-\lopd^2(\underline{\uvec{u}}^{(P-1)})-\lopd^1(\usol)}_Y.
\end{equation}
Since $\usol$ is the solution of $\lopd^2$, we have that $\lopd^2(\usol)=\uvec{0}_Y$ and we can add it inside the norm on the right hand side of the equality in \eqref{eq:2} and we get
\begin{equation}
\label{eq:3}
\norm{\underline{\uvec{u}}^{(P)}-\usol}_X\leq \frac{1}{\alpha_1}  \norm{\left[\lopd^1(\underline{\uvec{u}}^{(P-1)})-\lopd^2(\underline{\uvec{u}}^{(P-1)})\right]-\left[\lopd^1(\usol)-\lopd^2(\usol)\right]}_Y.
\end{equation}
Now, by applying the Lipschitz-continuity-like property we get
\begin{equation}
\label{eq:4}
\norm{\underline{\uvec{u}}^{(P)}-\usol}_X\leq \Delta \frac{\alpha_2}{\alpha_1}\norm{\underline{\uvec{u}}^{(P-1)}-\usol}_X.
\end{equation}
By repeating these calculations recursively we get the thesis.
\end{proof}

\section[DeC for ODEs]{The Deferred Correction for systems of ODEs}\label{chapDECODE}

We will focus on the numerical solution of the general Cauchy problem
\begin{equation}
\label{ODE}
\begin{cases}
\frac{d}{dt}\uvec{u}(t) = \uvec{G}(t,\uvec{u}(t)),\quad t\in[0,T], \\
\uvec{u}(0)=\uvec{z},
\end{cases}
\end{equation}
with $\uvec{u}(t) \in \mathbb{R}^Q$, $\uvec{z} \in \R^Q$ and $\uvec{G}: \R^+_0 \times \R^Q \to \R^Q$ a continuous map Lipschitz continuous with respect to $\uvec{u}$ uniformly with respect to $t$ with a Lipschitz constant $L$. This ensures the existence of a unique solution for the system of ODEs \eqref{ODE}.

We will assume here a classical one-step method setting: we discretize the time domain $[0,T]$ by introducing $N+1$ time nodes $t_n$, which are such that $0=t_0<t_1<\dots<t_N=T$ and therefore inducing $N$ intervals $[t_n,t_{n+1}]$, we denote by $\uvec{u}_{n}$ an approximation of the exact solution $\uvec{u}(t_n)$ at the time $t_n$ and we look for a recipe to compute $\uvec{u}_{n+1}$ by knowing $\uvec{u}_{n}$ for each $n=0,1,\dots,N-1$. We will focus on the generic time interval $[t_n,t_{n+1}]$ with $\Delta t=t_{n+1}-t_{n}$ and, as in the context of a general consistency analysis, we will assume $\uvec{u}_{n}=\uvec{u}(t_{n})$.

\subsection{bDeC}
In the general time step $[t_n,t_n+\Delta t]$ we introduce $M+1$ subtimenodes $t^0, \dots , t^M$ such that $t_n=t^0<t^1<\dots<t^M=t_n+\Delta t$, which are assumed here to be equispaced. We will refer to $\uvec{u}(t^m)$ as the exact solution in the node $t^m$ and to $\uvec{u}^m$ as the approximation of the solution in the same node.
Just for the first node, we set $\uvec{u}^0:=\uvec{u}_n$ and, in the accuracy study, we will consider it to be exact, i.e., $\uvec{u}^0=\uvec{u}(t^0)=\uvec{u}(t_n)=\uvec{u}_n$.

\subsubsection{Definition of $\lopdt^2$}

An exact integration of the system of ODEs over $[t^0,t^m]$ would result in
\begin{equation}
\label{exint}
\uvec{u}(t^m)-\uvec{u}^0-\int_{t^0}^{t^m}\uvec{G}(t,\uvec{u}(t))dt=\uvec{0}, \quad \forall m=1,\dots,M,
\end{equation}
from which we would have the exact solution $\uvec{u}(t^m)$.

Unfortunately, we cannot perform in general the exact integration 
and we need to make some approximations. We replace $\uvec{G}(t,\uvec{u}(t))$ by the Lagrange interpolating polynomial of degree $M$ associated to the $M+1$ nodes $t^m$ with $m=0,1,\dots,M,$ getting
\begin{equation}
\label{almosttruebefore}
\uvec{u}^m-\uvec{u}^0-\int_{t^0}^{t^m}\sum_{\ell=0}^{M} \uvec{G}(t^\ell,\uvec{u}(t^\ell))\psi^\ell(t)dt=\uvec{0}, \quad \forall m=1,\dots,M.
\end{equation}
Moving the finite sum and the vectors $\uvec{G}(t^\ell,\uvec{u}(t^\ell))$ outside of the integral, \eqref{almosttruebefore} can be recast as 
\begin{equation}
	\uvec{u}^m-\uvec{u}^0-\Delta t \sum_{\ell=0}^{M} \theta^m_\ell \uvec{G}(t^\ell,\uvec{u}(t^\ell))=\uvec{0} ,\quad \forall m=1,\dots,M,
	\label{almosttrue}
\end{equation}
where the coefficients $\theta^m_\ell$ are the normalized integrals of the Lagrange basis functions
and do not depend on $\dt$.

\begin{proposition}
$\uvec{u}^m$ satisfying \eqref{almosttrue} is an $(M+1)$-th order accurate approximation of $\uvec{u}(t^m)$.
\end{proposition}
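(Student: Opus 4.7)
The plan is to compare the defining relation \eqref{almosttrue} with the exact identity \eqref{exint} and reduce the accuracy claim to a standard Lagrange interpolation error estimate. The key observation is that by construction of the coefficients $\theta_\ell^m$, the sum $\Delta t \sum_{\ell=0}^M \theta_\ell^m \uvec{G}(t^\ell, \uvec{u}(t^\ell))$ is exactly $\int_{t^0}^{t^m} \pi_M[\uvec{G}(\cdot,\uvec{u}(\cdot))](t)\, dt$, where $\pi_M$ denotes Lagrange interpolation at the $M+1$ nodes $t^0, \dots, t^M$ (with basis functions $\psi^\ell$).

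First, I would subtract \eqref{almosttrue} from \eqref{exint}, using the assumption that $\uvec{u}^0 = \uvec{u}(t^0)$ is exact, to obtain
\[
\uvec{u}(t^m) - \uvec{u}^m \; = \; \int_{t^0}^{t^m} \!\left( \uvec{G}(t,\uvec{u}(t)) - \sum_{\ell=0}^{M} \uvec{G}(t^\ell, \uvec{u}(t^\ell))\, \psi^\ell(t) \right) dt.
\]
Thus the error of $\uvec{u}^m$ as an approximation to $\uvec{u}(t^m)$ is exactly the integral, over a subinterval of length at most $\Delta t$, of the Lagrange interpolation remainder of the composite function $\uvec{H}(t) := \uvec{G}(t,\uvec{u}(t))$.

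Second, I would invoke the classical pointwise estimate for Lagrange interpolation on $M+1$ equispaced nodes spanning an interval of length $\Delta t$: provided $\uvec{H} \in C^{M+1}$, one has
\[
\norm{ \uvec{H}(t) - \pi_M \uvec{H}(t) } \; \leq \; \frac{1}{(M+1)!}\, \norm{\uvec{H}^{(M+1)}}_{\infty} \, \Delta t^{M+1}, \qquad t \in [t^0, t^M].
\]
Integrating this pointwise bound over $[t^0, t^m] \subseteq [t^0, t^M]$ contracts the estimate by an additional factor of at most $\Delta t$, yielding $\norm{\uvec{u}(t^m) - \uvec{u}^m} \leq C\, \Delta t^{M+2}$, which is the local truncation error bound characterising an $(M+1)$-th order one-step method.

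The argument itself is essentially a one-liner once the interpolation remainder is recognised; the only point requiring attention is a regularity hypothesis beyond the mere Lipschitz assumption on $\uvec{G}$ stated for well-posedness. Assuming $\uvec{G} \in C^{M+1}$, a bootstrap through the ODE $\uvec{u}' = \uvec{G}(t,\uvec{u})$ gives $\uvec{u} \in C^{M+2}$ and hence $\uvec{H} \in C^{M+1}$, which legitimises the interpolation bound. No other obstacle is expected, as the constant $C$ is independent of $\Delta t$ (depending only on derivatives of $\uvec{G}$ and $\uvec{u}$ on the fixed time window).
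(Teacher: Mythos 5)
Your proposal is correct and follows essentially the same route as the paper: subtract the interpolated identity from the exact integral relation, recognise the integrand as the degree-$M$ Lagrange interpolation remainder of $\uvec{G}(t,\uvec{u}(t))$, which is $O(\Delta t^{M+1})$ pointwise, and integrate over an interval of length at most $\Delta t$ to obtain the $O(\Delta t^{M+2})$ local error. Your explicit remark that this requires $\uvec{G}\in C^{M+1}$ (beyond the Lipschitz hypothesis used for well-posedness) is a point the paper leaves implicit, but it does not change the argument.
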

\begin{proof}
For the proof, we will focus on the original equivalent formulation \eqref{almosttruebefore}.
Let us compute $\uvec{u}(t^m)-\uvec{u}^m$ with $\uvec{u}^m$ got by \eqref{almosttruebefore}. From \eqref{exint}, \eqref{almosttruebefore} and the $M$-th order accuracy on the approximation of $\uvec{G}(t,\uvec{u}(t))$ due to the interpolation with Lagrange polynomials of degree $M$ we have
\begin{align}
\begin{split}
	\uvec{u}(t^m)-\uvec{u}^m&=\uvec{u}^0+\int_{t^0}^{t^m}\uvec{G}(t,\uvec{u}(t))dt-\uvec{u}^0-\int_{t^0}^{t^m}\sum_{\ell=0}^{M} \uvec{G}(t^\ell,\uvec{u}(t^\ell))\psi^\ell(t)dt\\
	&=\int_{t^0}^{t^m} \left[ \uvec{G}(t,\uvec{u}(t))-\sum_{\ell=0}^{M} \uvec{G}(t^\ell,\uvec{u}(t^\ell))\psi^\ell(t) \right]dt\\
	&=\int_{t^0}^{t^m}O(\Delta t^{M+1})dt=O(\Delta t^{M+2}).
\end{split}
\end{align}

\end{proof}

Despite this result, the previous formula cannot be used in practice because the exact solution $\uvec{u}(t^\ell)$ in the nodes $t^\ell$ with $\ell=1,\dots,M$ is not available. 

We use the approximated values $\uvec{u}^\ell$ in place of them, thus getting the following implicit formulation
\begin{equation}
\uvec{u}^m-\uvec{u}^0-\Delta t \sum_{\ell=0}^{M} \theta^m_\ell \uvec{G}(t^\ell,\uvec{u}^\ell)=\uvec{0} \quad \forall m=1,\dots,M,
\label{true1}
\end{equation}
which leads to the definition of our $\lopdt^2$ operator
\begin{equation}
\label{l2ODE}
\lopdt^2(\underline{\uvec{u}}) = \begin{pmatrix}
        \uvec{u}^1-\uvec{u}^0-\Delta t \sum_{\ell=0}^{M} \theta^1_\ell \uvec{G}(t^\ell,\uvec{u}^\ell)\\
        \vdots\\
        \uvec{u}^m-\uvec{u}^0-\Delta t \sum_{\ell=0}^{M} \theta^m_\ell \uvec{G}(t^\ell,\uvec{u}^\ell)\\
        \vdots\\
        \uvec{u}^M-\uvec{u}^0-\Delta t \sum_{\ell=0}^{M} \theta^M_\ell \uvec{G}(t^\ell,\uvec{u}^\ell)\\
\end{pmatrix}
\text{ with }
\underline{\uvec{u}}=\left(
   \begin{array}{ccc}
   \uvec{u}^1\\
   \vdots\\
   \uvec{u}^m\\
   \vdots\\
   \uvec{u}^M
   \end{array}
\right).
\end{equation}

\begin{proposition}
Let $\uvec{u}^m$ be the $m$-th component of the solution of $\lopdt^2(\underline{\uvec{u}}) =0$. Then, $\uvec{u}^m$ is an $(M+1)$-th order accurate approximation of $\uvec{u}(t^m)$.
\end{proposition}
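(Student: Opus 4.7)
The plan is to compare the solution $\underline{\uvec{u}}^*$ of $\lopdt^2(\underline{\uvec{u}})=\uvec{0}$ with the vector $\underline{\uvec{u}}^{ex}$ whose components are the exact values $\uvec{u}(t^m)$, and to conclude by a consistency–stability argument similar to Theorem~\ref{th:DeC}. Plugging $\underline{\uvec{u}}^{ex}$ into $\lopdt^2$ gives the residual
\begin{equation*}
\bigl[\lopdt^2(\underline{\uvec{u}}^{ex})\bigr]_m = \uvec{u}(t^m)-\uvec{u}^0-\Delta t\sum_{\ell=0}^M \theta^m_\ell\,\uvec{G}(t^\ell,\uvec{u}(t^\ell)),
\end{equation*}
which is exactly the quantity shown to be $O(\Delta t^{M+2})$ in the previous proposition. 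Hence $\norm{\lopdt^2(\underline{\uvec{u}}^{ex})}=O(\Delta t^{M+2})$, and since $\lopdt^2(\underline{\uvec{u}}^*)=\uvec{0}$ we get the same estimate for the difference $\lopdt^2(\underline{\uvec{u}}^{ex})-\lopdt^2(\underline{\uvec{u}}^*)$.

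The second ingredient is a coercivity-type bound for $\lopdt^2$. For any $\underline{\uvec{v}},\underline{\uvec{w}}$, component $m$ of $\lopdt^2(\underline{\uvec{v}})-\lopdt^2(\underline{\uvec{w}})$ equals
\begin{equation*}
(\uvec{v}^m-\uvec{w}^m)-\Delta t\sum_{\ell=0}^M \theta^m_\ell\bigl[\uvec{G}(t^\ell,\uvec{v}^\ell)-\uvec{G}(t^\ell,\uvec{w}^\ell)\bigr].
\end{equation*}
Using the Lipschitz continuity of $\uvec{G}$ (constant $L$) together with the triangle inequality and the fact that $|\theta^m_\ell|$ are bounded by a constant $C$ independent of $\Delta t$, a reverse triangle inequality yields
\begin{equation*}
\norm{\lopdt^2(\underline{\uvec{v}})-\lopdt^2(\underline{\uvec{w}})}\geq (1-CL(M+1)\Delta t)\,\norm{\underline{\uvec{v}}-\underline{\uvec{w}}}.
\end{equation*}
For $\Delta t$ small enough the factor $1-CL(M+1)\Delta t$ is bounded below by some $\alpha_1>0$, which supplies the coercivity-like property.

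Combining the two bounds gives $\alpha_1 \norm{\underline{\uvec{u}}^{ex}-\underline{\uvec{u}}^*} \le \norm{\lopdt^2(\underline{\uvec{u}}^{ex})-\lopdt^2(\underline{\uvec{u}}^*)} = O(\Delta t^{M+2})$, so each component $\uvec{u}^m$ of the solution satisfies $\uvec{u}(t^m)-\uvec{u}^m=O(\Delta t^{M+2})$. This is the local truncation error on a single time step of size $\Delta t$ and, by the standard one-step-to-global conversion, corresponds to a globally $(M+1)$-th order accurate scheme.

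The main obstacle is the coercivity-like bound: one must check that the implicit coupling introduced by replacing $\uvec{u}(t^\ell)$ with $\uvec{u}^\ell$ does not destroy the accuracy, and this requires a CFL-type smallness condition on $\Delta t$ which makes $\lopdt^2$ essentially a small perturbation of the identity on the error vector. Everything else is a direct reuse of the previous proposition and of Lipschitz continuity of $\uvec{G}$.
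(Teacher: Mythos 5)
Your proof is correct, but it follows a genuinely different route from the paper's. The paper proves this proposition by introducing the Picard map $\mathcal{J}$ of \eqref{hoperator}, showing it is a contraction for $\Delta t$ small, and then running an induction on the fixed-point iterates to show $\uvec{y}^{m,(k)}=\uvec{u}(t^m)+O(\Delta t^{\min(k+1,M+2)})$, so that the limit (the solution of $\lopdt^2$) inherits the $O(\Delta t^{M+2})$ local error. You instead use a classical consistency-plus-stability argument applied directly to $\lopdt^2$: the consistency part (residual of the exact values is $O(\Delta t^{M+2})$) is indeed exactly the content of the preceding proposition, and your coercivity bound $\norm{\lopdt^2(\underline{\uvec{v}})-\lopdt^2(\underline{\uvec{w}})}_\infty\geq(1-C L(M+1)\Delta t)\norm{\underline{\uvec{v}}-\underline{\uvec{w}}}_\infty$ follows from the reverse triangle inequality, the Lipschitz continuity of $\uvec{G}$, and the boundedness of the $\theta^m_\ell$ (in fact the $\ell=0$ term drops out since $\uvec{v}^0=\uvec{w}^0=\uvec{u}^0$, so $M$ would suffice in place of $M+1$; either constant works). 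Your argument is shorter and mirrors the structure of \cref{th:DeC} with $\lopdt^2$ itself playing the role of the coercive operator. What it does not deliver, and what the paper's fixed-point route gives for free, is the \emph{existence} and uniqueness of the solution $\underline{\uvec{u}}^*$: you presuppose it (as the wording of the statement permits, and your coercivity bound does at least yield uniqueness), whereas the paper needs the Banach fixed-point construction anyway to verify hypothesis (i) of the abstract DeC theorem, and its induction additionally quantifies how each Picard iterate gains one order of accuracy, a fact echoed later in the analysis of the DeC iterations themselves. Both arguments require the same kind of smallness condition on $\Delta t$, namely $\Delta t$ below a threshold depending on $C_\theta$, $L$ and $M$.
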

\begin{proof}
Let us consider the following operator $\mathcal{J}: \mathbb{R}^{(M \times Q)} \rightarrow \mathbb{R}^{(M \times Q)}$ defined as
\begin{equation}
\label{hoperator}
\underline{\uvec{y}}=\mathcal{J}(\underline{\uvec{u}}) = \begin{pmatrix}
        \uvec{u}^0+\Delta t \sum_{\ell=0}^{M} \theta^1_\ell \uvec{G}(t^\ell,\uvec{u}^\ell)\\
        \vdots\\
        \uvec{u}^0+\Delta t \sum_{\ell=0}^{M} \theta^m_\ell \uvec{G}(t^\ell,\uvec{u}^\ell)\\
        \vdots\\
        \uvec{u}^0+\Delta t \sum_{\ell=0}^{M} \theta^M_\ell \uvec{G}(t^\ell,\uvec{u}^\ell)\\
\end{pmatrix}
\text{ with }
\underline{\uvec{y}}=\left(
   \begin{array}{ccc}
   \uvec{y}^1\\
   \vdots\\
   \uvec{y}^m\\
   \vdots\\
   \uvec{y}^M
   \end{array}
\right).\end{equation}
Again, we remark that $\uvec{u}^0$, the vector corresponding to the initial subtimenode, is always fixed.
The proof consists of two parts.
We will first show that, for $\Delta t$ small enough, $\mathcal{J}$ is a contraction over $\mathbb{R}^{(M \times Q)}$, which is a finite dimensional space (and so complete with respect to the distance induced by any norm). This will ensure, thanks to the Banach fixed-point theorem, that there exists a fixed point $\tilde{\underline{\uvec{u}}}$ such that $\tilde{\underline{\uvec{u}}}=\mathcal{J}(\tilde{\underline{\uvec{u}}})$ and that it is unique. It is very easy to see that this fixed point is the (unique) solution to the operator $\lopdt^2$.
Then, by iteratively applying the operator, we will generate a sequence of vectors converging to this fixed point and  we will show that this limit is an $(M+1)$-th order accurate approximation of the exact solution to the system of ODEs.

Let us first prove that $\mathcal{J}$ is a contraction for $\Delta t$ small enough.
We recall that $\theta^m_\ell$ are constant coefficients independent on $\dt$ and bounded by $C_\theta= \max |\theta_{\ell}^m|$ and that $\uvec{G}(t,\uvec{u})$ is Lipschitz-continuous with respect to $\uvec{u}$ uniformly with respect to $t$ with constant $L$. Now, using the triangular inequality, we have
\begin{align}
\begin{split}
\norm{ \mathcal{J}(\underline{\uvec{v}})-\mathcal{J}(\underline{\uvec{w}}) }_\infty=&\Delta t \norm{\sum_{\ell=0}^{M}\begin{pmatrix} 
		\theta^1_\ell \left[ \uvec{G}(t^\ell,\uvec{v}^\ell)-\uvec{G}(t^\ell,\uvec{w}^\ell) \right]\\
		\vdots\\
		\theta^m_\ell \left[ \uvec{G}(t^\ell,\uvec{v}^\ell)-\uvec{G}(t^\ell,\uvec{w}^\ell) \right]\\
		\vdots\\
		\theta^M_\ell \left[ \uvec{G}(t^\ell,\uvec{v}^\ell)-\uvec{G}(t^\ell,\uvec{w}^\ell) \right]
\end{pmatrix}}_\infty  \\
 \leq & \Delta t C_\theta \sum_{\ell=0}^{M} \norm{\uvec{G}(t^\ell,\uvec{v}^\ell)-\uvec{G}(t^\ell,\uvec{w}^\ell)}_{\infty, Q}   \\ 
 \leq & \Delta t C_\theta \sum_{\ell=0}^{M} L \norm{\uvec{v}^\ell-\uvec{w}^\ell}_{\infty, Q} \leq \Delta t C_\theta L M\norm{\underline{\uvec{v}}-\underline{\uvec{w}}}_\infty.
\end{split}
\end{align}

The last inequality follows from the fact that $\underline{\uvec{v}}-\underline{\uvec{w}}$ contains as components all the vectors $\uvec{v}^\ell-\uvec{w}^\ell$ for all $\ell=1,\dots,M$ and from the fact that $\uvec{v}^0=\uvec{w}^0=\uvec{u}^0$ and so
\begin{align}
\norm{\uvec{v}^\ell-\uvec{w}^\ell}_{\infty, Q} \leq\norm{\underline{\uvec{v}}-\underline{\uvec{w}}}_\infty, \quad \forall \ell=1,\dots,M,
\end{align}
where $\norm{\cdot}_{\infty, Q}$ is the infinity norm over $\mathbb{R}^{Q}$, while $\norm{\cdot}_{\infty}$ is the infinity norm over $\R^{M\times Q}$. For $\Delta t < \frac{1}{C_\theta L M}$, we have 
\begin{align}
\norm{ \mathcal{J}(\underline{\uvec{v}})-\mathcal{J}(\underline{\uvec{w}}) }_\infty<\delta\norm{\underline{\uvec{v}}-\underline{\uvec{w}}}_\infty
\end{align}
with $\delta<1$ and so $\mathcal{J}$ is a contraction.
As anticipated, there exists a unique fixed point $\tilde{\underline{\uvec{u}}}$, solution of $\lopdt^2$.

For the second part, we will prove the accuracy of the iteration of the fixed point procedure. 
We consider the sequence $\lbrace\underline{\uvec{y}}^{(k)}\rbrace_{k\in \mathbb{N}}$ given by the following recursive definition
\begin{align}
\underline{\uvec{y}}^{(k)}=\mathcal{J}(\underline{\uvec{y}}^{(k-1)})
\end{align}
with its general element being
\begin{align}
\underline{\uvec{y}}^{(k)}=\left(
   \begin{array}{ccc}
   \uvec{y}^{1,(k)}\\
   \vdots\\
   \uvec{y}^{m,(k)}\\
   \vdots\\
   \uvec{y}^{M,(k)}
   \end{array}
\right) \text{, with } \underline{\uvec{y}}^{(0)}=\left(
   \begin{array}{ccc}
   \uvec{y}^{1,(0)}\\
   \vdots\\
   \uvec{y}^{m,(0)}\\
   \vdots\\
   \uvec{y}^{M,(0)}
   \end{array}
\right)=\left(
   \begin{array}{ccc}
   \uvec{u}^{0}\\
   \vdots\\
   \uvec{u}^{0}\\
   \vdots\\
   \uvec{u}^{0}
   \end{array}
\right).
\end{align}

The general component $\uvec{y}^{m,(k)}$ of $\underline{\uvec{y}}^{(k)}$ is a 
$Q$-dimensional vector. The first index $m$ is referred to the subtimenode, the second is the index of the sequence. In order to have a more compact notation, we will not write $\uvec{G}(t^0,\uvec{u}^0)$ as a separate term but we set $\uvec{y}^{0,(k)}=\uvec{u}^0$ $\forall k\geq 0$, because the value of the solution at the first subtimenode is known.
From theory, we know that this sequence converges to the fixed point of $\mathcal{J}$ and so to the solution of the operator $\lopdt^2$.

Let us prove by induction on $k$ that  for all $m=1,\dots, M$, we have
\begin{equation}
	\uvec{y}^{m,(k)} = \uvec{u}(t^m) + O(\Delta t ^{\min(k+1,M+2)}).
\end{equation}
The base case, for $k=0$, is clearly true as a simple Taylor expansion gives
\begin{equation}
	\uvec{u}(t^m)  = \uvec{u}(t^0) + \Delta t \uvec{G}(t^0,\uvec{u}(t^0)) (t^m-t^0)+O(\Delta t^2) = \uvec{y}^{m,(0)} + O(\Delta t),
\end{equation}
reminding that $\frac{d}{dt}\uvec{u}(t) = \uvec{G}(t,\uvec{u}(t))$.

For the induction step, we assume that $\uvec{y}^{m,(k)}= \uvec{u}(t^m)+ O(\dt^{\min(k+1,M+2)})$ and we will prove that $\uvec{y}^{m,(k+1)}= \uvec{u}(t^m)+ O(\dt^{\min(k+2,M+2)})$.
By exploiting the Lipschitz-continuity of $\uvec{G}$, we have that
\begin{align}
\begin{split}
	\uvec{G}(t^\ell,\uvec{u}(t^\ell)) &=\uvec{G}(t^\ell,\uvec{y}^{\ell,(k)}) +\nabla_{\uvec{u}}\uvec{G}(t^\ell,\uvec{y}^{\ell,(k)})(\uvec{u}(t^{\ell})-\uvec{y}^{\ell,(k)})+O\left( \norm{\uvec{u}(t^{\ell})-\uvec{y}^{\ell,(k)}}^2_{\infty,Q} \right)\\ 
 & =\uvec{G}(t^\ell,\uvec{y}^{\ell,(k)}) + O(\dt^{\min(k+1,M+2)}),
 \end{split}
\end{align}
where $\nabla_{\uvec{u}}\uvec{G}(t^\ell,\uvec{y}^{\ell,(k)})$ is bounded in some norm by $L$.
We are then able to prove that
\begin{align}
\begin{split}
	\uvec{y}^{m,(k+1)}&=\uvec{u}(t^0) + \dt \sum_{\ell=0}^{M} \theta^m_\ell \uvec{G}(t^\ell,\uvec{y}^{\ell, (k)})\\
	& = \uvec{u}(t^0) + \dt \sum_{\ell=0}^{M} \theta^m_\ell \uvec{G}(t^\ell,\uvec{u}(t^\ell)) + O(\dt^{1+\min(k+1,M+2)}).
\end{split}
\end{align}
Now, thanks to the $(M+1)$-th order accuracy of \eqref{almosttrue}, we have that 
\begin{align}
\begin{split}
	\uvec{y}^{m,(k+1)}&= \uvec{u}(t^0) + \dt \sum_{\ell=0}^{M} \theta^m_\ell \uvec{G}(t^\ell,\uvec{u}(t^\ell)) + O(\dt^{1+\min(k+1,M+2)})\\
	& = \uvec{u}(t^m) + O(\dt^{M+2}) + O(\dt^{1+\min(k+1,M+2)}) =  \uvec{u}(t^m)  +O(\dt^{\min(k+2,M+2)}).
\end{split}
\end{align}
Hence, for $k>M$ the components $\uvec{y}^{(k),m}$ are an $(M+1)$ accurate solution of $u(t^m)$ and their limit for $k\to \infty$, i.e., the solutions of $\lopdt^2$,  is as well an $(M+1)$ approximation of the exact solution.
\end{proof}

\subsubsection{Definition of $\lopdt^1$}
If we apply the Euler method to get the approximate solution $\uvec{u}^m$ in the node $t^m$ we have
\begin{equation}
\label{Euler}
\uvec{u}^m-\uvec{u}^0-\Delta t \beta^m \uvec{G}(t^0,\uvec{u}^0)=\uvec{0},
\end{equation}
where $\beta^m=\frac{t^m-t^0}{\Delta t}$.
\begin{proposition}
Let $\uvec{u}^m$ be the solution of \eqref{Euler}, then $\uvec{u}^m$ is first order accurate, i.e., $\uvec{u}(t^m)-\uvec{u}^m=O(\dt^2)$. 
\end{proposition}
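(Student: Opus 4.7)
The plan is to perform a Taylor expansion of the exact solution $\uvec{u}(t^m)$ around $t^0$ and compare it term-by-term with the explicit expression of $\uvec{u}^m$ given by \eqref{Euler}. Since \eqref{Euler} is already explicit (the right-hand side only depends on known data $\uvec{u}^0$ at $t^0$), no fixed-point argument is needed here: I can simply subtract the two expressions.

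First I would rewrite \eqref{Euler} using $\beta^m=(t^m-t^0)/\Delta t$ to obtain the explicit form
\begin{equation*}
\uvec{u}^m=\uvec{u}^0+(t^m-t^0)\,\uvec{G}(t^0,\uvec{u}^0).
\end{equation*}
Next, I would invoke the consistency setting of the section, in which $\uvec{u}^0=\uvec{u}(t^0)$, and use the fact that $\uvec{u}$ solves the Cauchy problem \eqref{ODE}, so that $\frac{d}{dt}\uvec{u}(t^0)=\uvec{G}(t^0,\uvec{u}(t^0))$. A Taylor expansion with Lagrange remainder (or Peano form) gives
\begin{equation*}
\uvec{u}(t^m)=\uvec{u}(t^0)+(t^m-t^0)\,\uvec{G}(t^0,\uvec{u}(t^0))+O\!\left((t^m-t^0)^2\right).
\end{equation*}

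Subtracting, the $O(1)$ and $O(t^m-t^0)$ terms cancel exactly, leaving
\begin{equation*}
\uvec{u}(t^m)-\uvec{u}^m=O\!\left((t^m-t^0)^2\right).
\end{equation*}
Finally, since all subtimenodes lie in $[t^0,t^0+\Delta t]$, we have $t^m-t^0\leq \Delta t$, hence $O((t^m-t^0)^2)=O(\Delta t^2)$, which is the claimed bound. The only mild regularity point is that the Taylor remainder requires $\uvec{u}$ to be twice differentiable; this is automatic under the working assumption that $\uvec{G}$ is continuous and Lipschitz in $\uvec{u}$ (so $\uvec{u}$ is $C^1$) together with sufficient smoothness of $\uvec{G}$ in $t$, which is implicit in the accuracy analysis performed throughout the section. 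There is no real obstacle: the result is a textbook one-step consistency estimate for explicit Euler applied on the subinterval $[t^0,t^m]$.
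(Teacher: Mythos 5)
Your proof is correct and follows essentially the same route as the paper: a first order Taylor expansion of $\uvec{u}$ around $t^0$, identification of $\frac{d}{dt}\uvec{u}(t^0)$ with $\uvec{G}(t^0,\uvec{u}(t^0))$ via the ODE, and cancellation of the zeroth and first order terms using $\uvec{u}^0=\uvec{u}(t^0)$ and $\Delta t\,\beta^m=t^m-t^0$. Your added remark on the regularity needed for the Taylor remainder is a reasonable clarification but does not change the argument.
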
 
\begin{proof} 
We consider the difference between the exact solution $\uvec{u}(t^m)$ to our ODEs system and $\uvec{u}^m$ got from \eqref{Euler}. Through a first order Taylor expansion of $\uvec{u}(t)$ and from the fact that $\frac{d}{dt}\uvec{u}(t) = \uvec{G}(t,\uvec{u}(t))$, we have
\begin{equation}
\uvec{u}(t^m)-\uvec{u}^m=\uvec{u}^0+\uvec{G}(t^0,\uvec{u}^0)(t^m-t^0)+O(\Delta t^2)-\uvec{u}^0-\Delta t \beta^m \uvec{G}(t^0,\uvec{u}^0)=O(\Delta t^2),
\end{equation}
because $\uvec{u}^0=\uvec{u}(t^0)=\uvec{u}(t_n)=\uvec{u}_n$ and $\beta^m=\frac{t^m-t^0}{\Delta t}$.
\end{proof}
Directly from \eqref{Euler}, we get our explicit, low order operator
$\lopdt^1: \mathbb{R}^{(M \times Q)} \rightarrow \mathbb{R}^{(M \times Q)}$ defined as
\begin{equation}
\label{l1ODE}
\lopdt^1(\underline{\uvec{u}}) = \begin{pmatrix}

        \uvec{u}^1-\uvec{u}^0-\Delta t \beta^1 \uvec{G}(t^0,\uvec{u}^0)\\
        \vdots\\
        \uvec{u}^m-\uvec{u}^0-\Delta t \beta^m \uvec{G}(t^0,\uvec{u}^0)\\
        \vdots\\
        \uvec{u}^M-\uvec{u}^0-\Delta t \beta^M \uvec{G}(t^0,\uvec{u}^0)\\
        \end{pmatrix}
\text{ with }
\underline{\uvec{u}}=\left(
   \begin{array}{ccc}
   \uvec{u}^1\\
   \vdots\\
   \uvec{u}^m\\
   \vdots\\
   \uvec{u}^M
   \end{array}
\right).
\end{equation}

\subsubsection{Proof of the properties of $\lopdt^1$ and $\lopdt^2$}
We equip $X=Y=\mathbb{R}^{(M \times Q)}$ with the infinity norm $\norm{\cdot}_\infty$ and we recall here the hypotheses that are needed to apply the Deferred Correction method from the abstract formulation but characterizing them to our case.
\begin{itemize}
\item[i)] \textbf{Existence of a solution to $\lopdt^2$} \\
$\exists ! \usoldt \in \mathbb{R}^{(M \times Q)}$ solution of $\lopdt^2$, i.e. such that $\lopdt^2(\usoldt)=\uvec{0}$;
\item[ii)] \textbf{Coercivity-like property of $\lopdt^1$} \\
$\exists \alpha_1 \geq 0$ independent of $\Delta t$ s.t. 
\begin{equation}
\norm{\lopdt^1(\underline{\uvec{v}})-\lopdt^1(\underline{\uvec{w}})}_\infty \geq \alpha_1\norm{\underline{\uvec{v}}-\underline{\uvec{w}}}_\infty, \quad \forall \underline{\uvec{v}},\underline{\uvec{w}} \in \mathbb{R}^{(M \times Q)};
\end{equation}
\item[iii)] \textbf{Lipschitz-continuity-like condition of $\lopdt^1-\lopdt^2$} \\
$\exists \alpha_2 \geq 0$ independent of $\Delta t$ s.t. 
\begin{equation}
\norm{\left[\lopdt^1(\underline{\uvec{v}})-\lopdt^2(\underline{\uvec{v}})\right]-\left[\lopdt^1(\underline{\uvec{w}})-\lopdt^2(\underline{\uvec{w}})\right]}_\infty \leq \alpha_2 \Delta t \norm{\underline{\uvec{v}}-\underline{\uvec{w}}}_\infty, \quad \forall \underline{\uvec{v}},\underline{\uvec{w}}\in \mathbb{R}^{(M \times Q)}.
\label{eq:lip_like_bdec}
\end{equation}
\end{itemize}

\begin{proof} We prove in order the three properties.
\begin{itemize}
\item[i)] \textbf{Existence of a solution to $\lopdt^2$} \\
The first property, i.e., the existence of a unique solution to $\lopdt^2$, has already been shown in the proof of its $(M+1)$-th order accuracy by introducing the operator $\mathcal{J}: \mathbb{R}^{(M \times Q)} \rightarrow \mathbb{R}^{(M \times Q)}$ defined by \eqref{hoperator}. 
We showed that for $\Delta t$ small enough it is a contraction over the space $\mathbb{R}^{(M \times Q)}$ equipped with the infinity norm, so, there exists a unique fixed point of $\mathcal{J}$, which is the unique solution to $\lopdt^2$.

\item[ii)] \textbf{Coercivity-like property of $\lopdt^1$} \\
Let us now consider two generic vectors $\underline{\uvec{v}},\underline{\uvec{w}}\in \mathbb{R}^{(M \times Q)}$
\begin{align}
	\underline{\uvec{v}}=\left(
	\begin{array}{ccc}
		\uvec{v}^1\\
		\vdots\\
		\uvec{v}^m\\
		\vdots\\
		\uvec{v}^M
	\end{array}
	\right), \quad
	\underline{\uvec{w}}=\left(
	\begin{array}{ccc}
		\uvec{w}^1\\
		\vdots\\
		\uvec{w}^m\\
		\vdots\\
		\uvec{w}^M
	\end{array}
	\right),
\end{align}
with $\uvec{v}^m$ and $\uvec{w}^m$ for $m=1,\dots,M$ generic $Q$-dimensional vectors. 
From a direct computation, we have
\begin{align}
	\begin{split}
&\lopdt^1(\underline{\uvec{v}})-\lopdt^1(\underline{\uvec{w}})\\
&=             \begin{pmatrix}
        \uvec{v}^1-\uvec{u}^0-\Delta t \beta^1 \uvec{G}(t^0,\uvec{u}^0)\\
        \vdots\\
        \uvec{v}^m-\uvec{u}^0-\Delta t \beta^m \uvec{G}(t^0,\uvec{u}^0)\\
        \vdots\\
        \uvec{v}^M-\uvec{u}^0-\Delta t \beta^M \uvec{G}(t^0,\uvec{u}^0)\\
        \end{pmatrix}-\begin{pmatrix}
        \uvec{w}^1-\uvec{u}^0-\Delta t \beta^1 \uvec{G}(t^0,\uvec{u}^0)\\
        \vdots\\
        \uvec{w}^m-\uvec{u}^0-\Delta t \beta^m \uvec{G}(t^0,\uvec{u}^0)\\
        \vdots\\
        \uvec{w}^M-\uvec{u}^0-\Delta t \beta^M \uvec{G}(t^0,\uvec{u}^0)\\
        \end{pmatrix}=\begin{pmatrix}
        \uvec{v}^1-\uvec{w}^1\\
        \vdots\\
        \uvec{v}^m-\uvec{w}^m\\
        \vdots\\
        \uvec{v}^M-\uvec{w}^M
        \end{pmatrix},
	\end{split}
\end{align}
i.e., $\lopdt^1(\underline{\uvec{v}})-\lopdt^1(\underline{\uvec{w}}) =\underline{\uvec{v}}-\underline{\uvec{w}}$. Then,
\begin{align}
\norm{\lopdt^1(\underline{\uvec{v}})-\lopdt^1(\underline{\uvec{w}})}_\infty=\norm{\underline{\uvec{v}}-\underline{\uvec{w}}}_\infty
\end{align}
and thus the coercivity-like property of $\lopdt^1$ is verified with $\alpha_1=1$ and results in an equality.
Again, we remark that $\uvec{u}^0$ is given, it is part of the problem and embedded in the operators $\lopdt^1$ and $\lopdt^2$.
\item[iii)] \textbf{Lipschitz-continuity-like condition of $\lopdt^1-\lopdt^2$} \\
Again, we consider a direct computation but focusing, for the sake of compactness, on the $Q$-dimensional component got for a general $m$
\begin{equation}
	\begin{split}
&\left[\lopdt^{1,m}(\underline{\uvec{v}})-\lopdt^{2,m}(\underline{\uvec{v}})\right]-\left[\lopdt^{1,m}(\underline{\uvec{w}})-\lopdt^{2,m}(\underline{\uvec{w}})\right]\\
=&\uvec{v}^m-\uvec{u}^0-\Delta t \beta^m \uvec{G}(t^0,\uvec{u}^0) - \uvec{v}^m+\uvec{u}^0+ \Delta t \sum_{\ell=0}^{M} \theta^m_\ell \uvec{G}(t^\ell,\uvec{v}^\ell)  \\
-& \left[\uvec{w}^m-\uvec{u}^0-\Delta t \beta^m \uvec{G}(t^0,\uvec{u}^0)  -\uvec{w}^m+\uvec{u}^0+\Delta t \sum_{\ell=0}^{M} \theta^m_\ell \uvec{G}(t^\ell,\uvec{w}^\ell)\right] \\
=&\dt \sum_{\ell=0}^M  \theta^m_\ell \left( \uvec{G}(t^\ell,\uvec{v}^\ell)- \uvec{G}(t^\ell,\uvec{w}^\ell) \right),
\label{intermezzo}
	\end{split}
\end{equation}
where clearly $\uvec{v}^0=\uvec{w}^0=\uvec{u}^0$. As we pointed out several times, $\uvec{u}^0$ is not an unknown, it is a given vector, it is ``part'' of the problem and is embedded in the operators. We use $\uvec{v}^0$ and $\uvec{w}^0$ instead of $\uvec{u}^0$ for the sake of compactness. 
Let us recall that $\theta^m_\ell$, for $m=1,\dots,M$ and $\ell=0,1,\dots,M$, are fixed constant coefficients independent of $\Delta t$, thus bounded in absolute value by a positive constant $C_\theta$, and that $\uvec{G}(t,\uvec{u})$ is Lipschitz-continuous with respect to $\uvec{u}$ uniformly with respect to $t$ with a Lipschitz constant $L$.
By applying the triangular inequality, we have
\begin{align}
\begin{split}
&\norm{\left[\lopdt^1(\underline{\uvec{v}})-\lopdt^2(\underline{\uvec{v}})\right]-\left[\lopdt^1(\underline{\uvec{w}})-\lopdt^2(\underline{\uvec{w}})\right]}_\infty\\
=&\Delta t \norm{\sum_{\ell=0}^{M}\begin{pmatrix} 
        \theta^1_\ell \left[ \uvec{G}(t^\ell,\uvec{v}^\ell)-\uvec{G}(t^\ell,\uvec{w}^\ell) \right]\\
        \vdots\\
        \theta^m_\ell \left[ \uvec{G}(t^\ell,\uvec{v}^\ell)-\uvec{G}(t^\ell,\uvec{w}^\ell) \right]\\
        \vdots\\
        \theta^M_\ell \left[ \uvec{G}(t^\ell,\uvec{v}^\ell)-\uvec{G}(t^\ell,\uvec{w}^\ell) \right]
        \end{pmatrix}}_\infty 
        \leq  \Delta t C_\theta \sum_{\ell=0}^{M} \norm{\begin{pmatrix} 
        \uvec{G}(t^\ell,\uvec{v}^\ell)-\uvec{G}(t^\ell,\uvec{w}^\ell)\\
        \vdots\\
        \uvec{G}(t^\ell,\uvec{v}^\ell)-\uvec{G}(t^\ell,\uvec{w}^\ell)\\
        \vdots\\
        \uvec{G}(t^\ell,\uvec{v}^\ell)-\uvec{G}(t^\ell,\uvec{w}^\ell)
        \end{pmatrix}}_\infty \\
    =& \Delta t C_\theta \sum_{\ell=0}^{M} \norm{\uvec{G}(t^\ell,\uvec{v}^\ell)-\uvec{G}(t^\ell,\uvec{w}^\ell)}_{\infty, Q} 
        \!\!\leq \Delta t C_\theta \sum_{\ell=0}^{M} L \norm{\uvec{v}^\ell-\uvec{w}^\ell}_{\infty, Q} \!\!\leq \Delta t C_\theta L M\norm{\underline{\uvec{v}}-\underline{\uvec{w}}}_\infty,
\end{split}
\end{align}
where the last inequality follows from the fact that $\underline{\uvec{v}}-\underline{\uvec{w}}$ contains as components all the vectors $\uvec{v}^\ell-\uvec{w}^\ell$ for $\ell=1,\dots,M$ and from the fact that $\uvec{v}^0=\uvec{w}^0=\uvec{u}^0$. 
This proves the Lipschitz-continuity-like condition of $\lopdt^1-\lopdt^2$ with $\alpha_2=C_\theta L M$.
For more clarity, we underline that the infinity norm $\norm{\cdot}_{\infty,Q}$ is applied to $Q$-dimensional vectors (and not to $(M\times Q)$-dimensional vectors like $\norm{\cdot}_\infty$).
This completes the analysis of the Deferred Correction applied to the context of the systems of ordinary differential equations.
\end{itemize}
\end{proof}

\begin{observation}[On the optimal value of $\alpha_2$]
The constant $C_\theta L M$ is not the sharpest estimate for $\alpha_2$ in \eqref{eq:lip_like_bdec}. Introducing the support structures
\begin{align}
\Theta=\begin{pmatrix}
0 & 0 & \dots & 0 \\
\theta^1_0 & \theta^1_1 & \dots & \theta^1_M \\
\theta^2_0 & \theta^2_1 & \dots & \theta^2_M \\
\vdots & \vdots & \ddots & \vdots \\
\theta^M_0 & \theta^M_1 & \dots & \theta^M_M \\
\end{pmatrix}, \quad \underline{\uvec{G}}(\underline{\uvec{v}})=\left(
   \begin{array}{ccc}
      \uvec{G}(t^{1},\uvec{v}^{1})\\
      \vdots \\
      \uvec{G}(t^{m},\uvec{v}^{m})\\
      \vdots \\
      \uvec{G}(t^{M},\uvec{v}^{M})
   \end{array}
\right), \quad \underline{\uvec{G}}(\underline{\uvec{w}})=\left(
   \begin{array}{ccc}
      \uvec{G}(t^{1},\uvec{w}^{1})\\
      \vdots \\
      \uvec{G}(t^{m},\uvec{w}^{m})\\
      \vdots \\
      \uvec{G}(t^{M},\uvec{w}^{M})
   \end{array}
\right),
\end{align}
and recalling that $\uvec{v}^0=\uvec{w}^0=\uvec{u}^0$, one can easily verify that
\begin{align}
\begin{split}
\sum_{\ell=0}^{M}\begin{pmatrix}
        \theta^1_\ell \left[ \uvec{G}(t^\ell,\uvec{v}^\ell)-\uvec{G}(t^\ell,\uvec{w}^\ell) \right]\\
        \vdots\\
        \theta^m_\ell \left[ \uvec{G}(t^\ell,\uvec{v}^\ell)-\uvec{G}(t^\ell,\uvec{w}^\ell) \right]\\
        \vdots\\
        \theta^M_\ell \left[ \uvec{G}(t^\ell,\uvec{v}^\ell)-\uvec{G}(t^\ell,\uvec{w}^\ell) \right]
        \end{pmatrix}=\Theta_{1:,1:} \left[ \underline{\uvec{G}}(\underline{\uvec{v}}) - \underline{\uvec{G}}(\underline{\uvec{w}})\right],
\end{split}
\end{align}
where by $\Theta_{1:,1:}$ we mean the submatrix extracted from $\Theta$ with row and column indices from $1$ on, assuming a zero-based numeration.

Therefore, we have that
\begin{align}
\begin{split}
\norm{\sum_{\ell=0}^{M}\begin{pmatrix} 
        \theta^1_\ell \left[ \uvec{G}(t^\ell,\uvec{v}^\ell)-\uvec{G}(t^\ell,\uvec{w}^\ell) \right]\\
        \vdots\\
        \theta^m_\ell \left[ \uvec{G}(t^\ell,\uvec{v}^\ell)-\uvec{G}(t^\ell,\uvec{w}^\ell) \right]\\
        \vdots\\
        \theta^M_\ell \left[ \uvec{G}(t^\ell,\uvec{v}^\ell)-\uvec{G}(t^\ell,\uvec{w}^\ell) \right]
        \end{pmatrix}}_\infty&=\norm{\Theta_{1:,1:} \left[ \underline{\uvec{G}}(\underline{\uvec{v}}) - \underline{\uvec{G}}(\underline{\uvec{w}})\right]}_{\infty},
\end{split}
\end{align}
By basic linear algebra and thanks to the Lipschitz-continuity of $\uvec{G}(t,\uvec{u})$ with respect to $\uvec{u}$ uniformly with respect to $t$, we get
\begin{align}
\begin{split}
\norm{\Theta_{1:,1:} \left[ \underline{\uvec{G}}(\underline{\uvec{v}}) - \underline{\uvec{G}}(\underline{\uvec{w}})\right]}_{\infty}
        &\leq \norm{\Theta_{1:,1:}}_{\infty}\norm{\underline{\uvec{G}}(\underline{\uvec{v}}) - \underline{\uvec{G}}(\underline{\uvec{w}})}_{\infty}\\
        &\leq \norm{\Theta_{1:,1:}}_{\infty}L\norm{\underline{\uvec{v}}- \underline{\uvec{w}}}_{\infty}
\end{split}
\end{align}
where $\norm{\cdot}_{\infty}$ applied to the matrix $\Theta_{1:,1:}$ is the matrix norm induced by the corresponding vector norm and hence $\norm{\Theta_{1:,1:}}_{\infty}=\max\limits_{m=1,\dots,M}\sum\limits_{\ell=1}^M\abs{\theta_\ell^m}$.
Thus, one gets
\begin{equation}
\norm{\left[\lopdt^1(\underline{\uvec{v}})-\lopdt^2(\underline{\uvec{v}})\right]-\left[\lopdt^1(\underline{\uvec{w}})-\lopdt^2(\underline{\uvec{w}})\right]}_\infty\leq \alpha_2\Delta t\norm{\underline{\uvec{v}}- \underline{\uvec{w}}}_{\infty}
\end{equation}
with $\alpha_2=\norm{\Theta_{1:,1:}}_{\infty}L$, which constitutes a sharper estimate with respect to $C_\theta L M$.
Indeed, the reported matrix $\Theta$ is referred to a scalar problem and it must be block-expanded in the context of a vectorial problem, however, this does not influence the estimate. 
\end{observation}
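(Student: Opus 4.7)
The plan is to repeat the direct computation from the proof of property (iii), which already yields, component by component, that the $m$-th block of $[\lopdt^1(\underline{\uvec{v}})-\lopdt^2(\underline{\uvec{v}})]-[\lopdt^1(\underline{\uvec{w}})-\lopdt^2(\underline{\uvec{w}})]$ equals $\Delta t \sum_{\ell=0}^M \theta^m_\ell(\uvec{G}(t^\ell,\uvec{v}^\ell)-\uvec{G}(t^\ell,\uvec{w}^\ell))$. The point is that one should not immediately apply the triangular inequality (which is what forces the loose $C_\theta L M$ bound): instead, one should recognize the stacked sums as a single matrix-vector product. With $\Theta$ and $\underline{\uvec{G}}$ as defined in the remark, and exploiting $\uvec{v}^0=\uvec{w}^0=\uvec{u}^0$ to discard the zeroth row and column, the full block reads $\Delta t\,\Theta_{1:,1:}[\underline{\uvec{G}}(\underline{\uvec{v}})-\underline{\uvec{G}}(\underline{\uvec{w}})]$.

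Once this compact expression is in place, I would apply the induced matrix infinity norm to obtain
\begin{equation}
\norm{\Theta_{1:,1:}[\underline{\uvec{G}}(\underline{\uvec{v}})-\underline{\uvec{G}}(\underline{\uvec{w}})]}_\infty\leq \norm{\Theta_{1:,1:}}_\infty\norm{\underline{\uvec{G}}(\underline{\uvec{v}})-\underline{\uvec{G}}(\underline{\uvec{w}})}_\infty,
\end{equation}
then use the Lipschitz-continuity of $\uvec{G}$ componentwise to bound $\norm{\underline{\uvec{G}}(\underline{\uvec{v}})-\underline{\uvec{G}}(\underline{\uvec{w}})}_\infty\leq L\norm{\underline{\uvec{v}}-\underline{\uvec{w}}}_\infty$. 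Putting these together delivers the claimed constant $\alpha_2=\norm{\Theta_{1:,1:}}_\infty L=L\max_{m=1,\dots,M}\sum_{\ell=1}^M|\theta^m_\ell|$, and sharpness compared to $C_\theta L M$ follows from the elementary estimate $\max_m\sum_\ell|\theta^m_\ell|\leq M\max_{m,\ell}|\theta^m_\ell|=M C_\theta$, with strict inequality generically.

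The main obstacle I foresee is bookkeeping rather than mathematics: one has to justify that the scalar matrix $\Theta_{1:,1:}$ really controls the vectorial case, where each entry $\theta^m_\ell$ should be thought of as a block $\theta^m_\ell I_Q$. Concretely, I would verify that under the infinity norm on $\R^{M\times Q}$ viewed as a stack of $Q$-dimensional sub-vectors equipped with $\norm{\cdot}_{\infty,Q}$, the induced norm of the block-expanded matrix $\Theta_{1:,1:}\otimes I_Q$ coincides with $\norm{\Theta_{1:,1:}}_\infty$, since every block is a scalar multiple of the identity and the $\infty$-norm on the outer index is a row-sum of absolute values. Once this identification is made, the estimate transfers verbatim from the scalar to the vectorial setting, as claimed at the end of the remark.
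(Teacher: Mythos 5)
Your proposal is correct and follows essentially the same route as the paper's remark: rewrite the stacked sums as the matrix--vector product $\Theta_{1:,1:}\left[\underline{\uvec{G}}(\underline{\uvec{v}})-\underline{\uvec{G}}(\underline{\uvec{w}})\right]$, bound it by the induced infinity norm, and invoke the Lipschitz continuity of $\uvec{G}$ to obtain $\alpha_2=\norm{\Theta_{1:,1:}}_{\infty}L$. Your explicit verification that the induced infinity norm of the block-expanded matrix $\Theta_{1:,1:}\otimes I_Q$ equals $\norm{\Theta_{1:,1:}}_{\infty}$ is exactly the justification the paper leaves implicit in its closing sentence, so nothing is missing.
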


\subsection{sDeC} 
The construction of this DeC method makes use of the definition of the subtimenodes introduced for the bDeC method. The main difference is that here we focus on the integration of the system of ODEs in the intervals $[t^{m-1},t^{m}]$ rather than $[t^{0},t^{m}].$

\subsubsection{Definition of $\lopdt^2$}
We start from the exact integration of the system of ODEs in the interval $[t^{m-1},t^m]$, which would result in
\begin{equation}
\label{exint_2}
\uvec{u}(t^m)-\uvec{u}(t^{m-1})-\int_{t^{m-1}}^{t^m}\uvec{G}(t,\uvec{u}(t))dt=\uvec{0}, \quad \forall m=1,\dots,M.
\end{equation}
Again, in order to get an expression that can actually be used, we replace $\uvec{G}(t,\uvec{u}(t))$ with its $M$-th order accurate Lagrange interpolant of degree $M$ associated to the $M+1$ subtimenodes $t^m$ and replace $\uvec{u}(t^\ell)$ by $\uvec{u}^\ell$ thus getting 
\begin{equation}
\label{true_2}
\uvec{u}^m-\uvec{u}^{m-1}-\int_{t^{m-1}}^{t^m}\sum_{\ell=0}^{M} \uvec{G}(t^\ell,\uvec{u}^\ell)\psi^\ell(t)dt=\uvec{0}, \quad \forall m=1,\dots,M.
\end{equation}
Moving the finite sum and the vectors $\uvec{G}(t^\ell,\uvec{u}^\ell)$ outside of the integral and performing the exact integration of the Lagrangian polynomial functions $\psi^\ell(t)$ in the subinterval $[t^{m-1},t^m]$ we get
\begin{equation}
\uvec{u}^m-\uvec{u}^{m-1}-\Delta t \sum_{\ell=0}^{M} \delta^m_\ell \uvec{G}(t^\ell,\uvec{u}^\ell)=\uvec{0}, \quad \forall m=1,\dots,M,
\label{true1_2}
\end{equation}
where, just like in the previous case, coefficients $\delta^m_\ell$ are normalized integrals of the Lagrange basis functions independent of $\Delta t$.

Our implicit $(M+1)$-th order accurate operator $\lopdt^2: \mathbb{R}^{(M \times Q)} \rightarrow \mathbb{R}^{(M \times Q)}$ is therefore defined as
\begin{equation}
\label{l2ODE_2}
\lopdt^2(\underline{\uvec{u}}) = \begin{pmatrix}
        \uvec{u}^1-\uvec{u}^{0}-\Delta t \sum_{\ell=0}^{M} \delta^1_\ell \uvec{G}(t^\ell,\uvec{u}^\ell)\\
        \vdots\\
        \uvec{u}^m-\uvec{u}^{m-1}-\Delta t \sum_{\ell=0}^{M} \delta^m_\ell \uvec{G}(t^\ell,\uvec{u}^\ell)\\
        \vdots\\
        \uvec{u}^M-\uvec{u}^{M-1}-\Delta t \sum_{\ell=0}^{M} \delta^M_\ell \uvec{G}(t^\ell,\uvec{u}^\ell)\\
\end{pmatrix}
\text{ with }
\underline{\uvec{u}}=\left(
   \begin{array}{ccc}
   \uvec{u}^1\\
   \vdots\\
   \uvec{u}^m\\
   \vdots\\
   \uvec{u}^M
   \end{array}
\right).
\end{equation}

\subsubsection{Definition of $\lopdt^1$}
Also in this case the operator $\lopdt^1$ is obtained by a first order approximation in the integration of our initial system of ODEs. Applying the Euler method in the subinterval $[t^{m-1},t^m]$, we get
\begin{equation}
\label{Euler_2}
\uvec{u}^m-\uvec{u}^{m-1}-\Delta t \gamma^m \uvec{G}(t^{m-1},\uvec{u}^{m-1})=\uvec{0}
\end{equation}
where $\gamma^m=\frac{t^m-t^{m-1}}{\Delta t}$ are normalized coefficients.
The explicit, first order order operator
$\lopdt^1: \mathbb{R}^{(M \times Q)} \rightarrow \mathbb{R}^{(M \times Q)}$ is defined as
\begin{equation}
\label{l1ODE_2}
\lopdt^1(\underline{\uvec{u}}) = \begin{pmatrix}

        \uvec{u}^1-\uvec{u}^{0}-\Delta t \gamma^1 \uvec{G}(t^{0},\uvec{u}^{0})\\
        \vdots\\
        \uvec{u}^m-\uvec{u}^{m-1}-\Delta t \gamma^m \uvec{G}(t^{m-1},\uvec{u}^{m-1})\\
        \vdots\\
        \uvec{u}^M-\uvec{u}^{M-1} \Delta t \gamma^M \uvec{G}(t^{M-1},\uvec{u}^{M-1})\\
        \end{pmatrix}
        \text{ with }
\underline{\uvec{u}}=\left(
   \begin{array}{ccc}
   \uvec{u}^1\\
   \vdots\\
   \uvec{u}^m\\
   \vdots\\
   \uvec{u}^M
   \end{array}
\right).
\end{equation}

\subsubsection{sDeC as a perturbation of bDeC}
The proofs seen for the previous formulation cannot be extended to this case in a straightforward way, but it is possible to show that the second formulation is actually a perturbation of the first one with no impact on the accuracy.
Let us recall here, for more clarity, the updating formulas of the bDeC and of the sDeC methods for the computation of $\uvec{u}^{m,(p)}$, $m$-th component of the approximated solution at the iteration $p$, 
\begin{itemize}
\item \textbf{bDeC}
\begin{align}\label{eq:bDeC}
\uvec{u}^{m,(p)}_b=\uvec{u}^0+\Delta t \sum_{\ell=0}^{M} \theta^m_\ell \uvec{G}(t^\ell,\uvec{u}_b^{\ell,(p-1)})
\end{align}
\item \textbf{sDeC}
\begin{align}\label{eq:sDeC}
\uvec{u}^{m,(p)}_s=\uvec{u}^{0}+\Delta t \sum_{\ell=0}^{m-1} \gamma^{\ell+1} \left( \uvec{G}(t^{\ell},\uvec{u}_s^{\ell,(p)})-\uvec{G}(t^{\ell},\uvec{u}_s^{\ell,(p-1)}) \right)+\Delta t \sum_{\ell=0}^{M} \theta^m_\ell \uvec{G}(t^\ell,\uvec{u}_s^{\ell,(p-1)}).
\end{align}
\end{itemize}
The difference lies in the term
\begin{align}
\Delta t \sum_{\ell=0}^{m-1} \gamma^\ell \left( \uvec{G}(t^{\ell},\uvec{u}_s^{\ell,(p)})-\uvec{G}(t^{\ell},\uvec{u}_s^{\ell,(p-1)}) \right),
\end{align}
which consists in a sum of differences of evaluations of the function $\uvec{G}$ multiplied by $\Delta t$.
We will show now why this term can be seen as a perturbation of the updating formula of the first formulation with no impact on the accuracy. This actually depends on the fact that $\uvec{u}^{\ell,(p)}$ and $\uvec{u}^{\ell,(p-1)}$ are approximations of the same quantity.

\begin{proposition}[sDeC accuracy]
	The approximation $\uvec{u}^{m,(p)}_s$ provided by the sDeC \eqref{eq:sDeC} is an $O(\dt^{p+1})$ perturbation of $\uvec{u}^{m,(p)}_b$ obtained through the bDeC \eqref{eq:bDeC}.
\end{proposition}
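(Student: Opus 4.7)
My plan is to prove by induction on the iteration index $p$ that $\norm{\uvec{u}_s^{m,(p)}-\uvec{u}_b^{m,(p)}}_{\infty,Q} = O(\dt^{p+1})$ for every subtimenode $m=0,\dots,M$. Since both schemes start from the common initialization $\uvec{u}^{m,(0)}=\uvec{u}^0$ for all $m$, the base case $p=0$ is trivial. For the induction step I would subtract \eqref{eq:bDeC} from \eqref{eq:sDeC}, which cleanly isolates two contributions. The first,
\begin{equation*}
T_B^m := \dt \sum_{\ell=0}^{M} \theta_\ell^m \Bigl(\uvec{G}(t^\ell,\uvec{u}_s^{\ell,(p-1)})-\uvec{G}(t^\ell,\uvec{u}_b^{\ell,(p-1)})\Bigr),
\end{equation*}
is the propagation of the perturbation from iteration $p-1$; by Lipschitz continuity of $\uvec{G}$ and the outer induction hypothesis it is bounded by $C \dt \cdot O(\dt^p) = O(\dt^{p+1})$. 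The second,
\begin{equation*}
T_A^m := \dt \sum_{\ell=0}^{m-1} \gamma^{\ell+1} \Bigl(\uvec{G}(t^\ell,\uvec{u}_s^{\ell,(p)})-\uvec{G}(t^\ell,\uvec{u}_s^{\ell,(p-1)})\Bigr),
\end{equation*}
is the genuinely new sDeC term and needs a more delicate argument because it compares two sDeC iterates.

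The key to bounding $T_A^m$ is to route both $\uvec{u}_s^{\ell,(p)}$ and $\uvec{u}_s^{\ell,(p-1)}$ through the exact value $\uvec{u}(t^\ell)$ by the triangle inequality, and then use that each sDeC iterate is close to the corresponding bDeC iterate (outer induction), combined with the $(p+1)$-th order accuracy of the bDeC iterates established earlier in \cref{chapDECODE}. For the older iterate, the outer hypothesis plus bDeC accuracy directly yields $\uvec{u}_s^{\ell,(p-1)}-\uvec{u}(t^\ell) = O(\dt^p)$. For the newer iterate $\uvec{u}_s^{\ell,(p)}$ with $\ell<m$, I would add an inner induction on $m$, whose hypothesis furnishes the $(p+1)$-th order perturbation bound at smaller subtimenodes, so that $\uvec{u}_s^{\ell,(p)}-\uvec{u}(t^\ell) = O(\dt^{\min(p+1,M+2)})$ follows from another application of the bDeC accuracy. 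Subtracting gives $\uvec{u}_s^{\ell,(p)}-\uvec{u}_s^{\ell,(p-1)}=O(\dt^p)$, and the Lipschitz continuity of $\uvec{G}$ together with the explicit factor $\dt$ in front of the sum yield $T_A^m = O(\dt^{p+1})$, closing the induction.

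The main obstacle, and only subtle point, is the nested induction. A naive single induction on $p$ cannot close $T_A^m$, since $\uvec{u}_s^{\ell,(p)}$ is not controlled by the outer hypothesis; however, the cascaded structure of the sDeC update (at iteration $p$, the subtimenode $m$ uses only the previously computed $\ell=0,\dots,m-1$ of the same iteration) is exactly what permits a well-founded inner induction on $m$ to supply the missing bound. Once this structural observation is in place, the previously established bDeC accuracy acts as the bridge between the two sDeC iterates at successive iterations, and the estimates for $T_A^m$ and $T_B^m$ combine directly to the desired $O(\dt^{p+1})$ perturbation.
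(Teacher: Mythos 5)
Your proposal is correct and follows essentially the same route as the paper: a double induction on $p$ and $m$, the same splitting of the sDeC--bDeC difference into the propagated term and the genuinely new cascaded term, and the same use of the Lipschitz continuity of $\uvec{G}$ together with the previously established bDeC accuracy to bound the difference of successive iterates by $O(\dt^{p})$. The only cosmetic difference is that you pivot through the exact solution $\uvec{u}(t^\ell)$ while the paper pivots through the discrete fixed point $\uvec{u}_\Delta^\ell$ of $\lopdt^2$; both yield the same estimate.
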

\begin{proof}
We will prove it by induction over $p$ and $m$. The base case of the induction is clearly true as $\uvec{u}^{m,(p)}_s=\uvec{u}^{m,(p)}_b=\uvec{u}^{0}$ whenever $p$ or $m$ are equal to $0$. We focus now on the induction step. We select $p,m\geq 1$ and assume
\begin{align}
\uvec{u}^{\ell,(k)}_s=\uvec{u}^{\ell,(k)}_b+O(\dt^{k+1}),  \text{  for  } \begin{cases}
k<p, \quad \forall \ell=1,\dots,M, \text{ or}\\
k=p, \quad \forall \ell \leq m-1
\end{cases}
\end{align}
and we will prove that $\uvec{u}^{m,(p)}_s=\uvec{u}^{m,(p)}_b+O(\dt^{p+1})$.
We start from \eqref{eq:sDeC} and, thanks to the induction hypothesis, to the Lipschitz-continuity of $\uvec{G}$ and by definition of $\uvec{u}^{m,(p)}_b$ in \eqref{eq:bDeC}, we have that
\begin{align}
\begin{split}
\uvec{u}^{m,(p)}_s&=\uvec{u}^{0}+\Delta t \sum_{\ell=0}^{m-1} \gamma^{\ell+1} \left( \uvec{G}(t^{\ell},\uvec{u}_s^{\ell,(p)})-\uvec{G}(t^{\ell},\uvec{u}_s^{\ell,(p-1)}) \right)+\Delta t \sum_{\ell=0}^{M} \theta^m_\ell \uvec{G}(t^\ell,\uvec{u}_s^{\ell,(p-1)})\\
\begin{split}
&=\uvec{u}^{0}+\Delta t \sum_{\ell=0}^{m-1} \gamma^{\ell+1} \left( \uvec{G}(t^{\ell},\uvec{u}_b^{\ell,(p)})-\uvec{G}(t^{\ell},\uvec{u}_b^{\ell,(p-1)})+O(\dt^{p}) \right)\\
&\quad\quad\quad\quad\quad\quad\quad\quad\quad\quad\quad\quad\quad\quad +\Delta t \left(\sum_{\ell=0}^{M} \theta^m_\ell \uvec{G}(t^\ell,\uvec{u}_b^{\ell,(p-1)})+O(\dt^{p}) \right)
\end{split}\\
&=\uvec{u}^{m,(p)}_b+\Delta t \sum_{\ell=0}^{m-1} \gamma^{\ell+1}  \left( \uvec{G}(t^{\ell},\uvec{u}_b^{\ell,(p)})-\uvec{G}(t^{\ell},\uvec{u}_b^{\ell,(p-1)}) \right)+O(\dt^{p+1}).
\end{split}
\label{eq:sDeC2}
\end{align}
Thanks again to the Lipschitz-continuity of $\uvec{G}$ and to the results on the accuracy of the bDeC method, for each $\ell=1,\dots, m-1$, we can write 
\begin{align}
\begin{split}
\norm{\uvec{G}(t^{\ell},\uvec{u}_b^{\ell,(p)})-\uvec{G}(t^{\ell},\uvec{u}_b^{\ell,(p-1)})}_{\infty,Q}&\leq L\norm{\uvec{u}_b^{\ell,(p)}-\uvec{u}_b^{\ell,(p-1)}}_{\infty,Q}\\
&\leq L\norm{\uvec{u}_\Delta^{\ell}-\uvec{u}_\Delta^{\ell} +O(\dt^{p}) }_{\infty,Q}=O(\dt^{p}),
\end{split}
\label{eq:neededbutnoname}
\end{align}
where $\uvec{u}_\Delta^{\ell}$ is the $\ell$-th component of $\usoldt$, solution to $\lopdt^2$; further, for  $\ell=0$ we have $\uvec{G}(t^{\ell},\uvec{u}_b^{\ell,(p)})-\uvec{G}(t^{\ell},\uvec{u}_b^{\ell,(p-1)})=\uvec{0}$ as the component at the initial subtimenode is always equal to $\uvec{u}^0$.
By the previous fact, coming back to \eqref{eq:sDeC2}, we get the thesis
\begin{equation}
\uvec{u}^{m,(p)}_s=\uvec{u}^{m,(p)}_b+O(\dt^{p+1}).
\end{equation}

\end{proof}

\section{Continuous Galerkin FEM}\label{sec:CG}
Let $\Omega\subset \R^D$ an open regular bounded domain. The general form of a hyperbolic system of balance laws reads 
\begin{equation}
\label{sys}
\frac{\partial}{\partial t}\uvec{u}(\uvec{x},t)+\textrm{div}_{\uvec{x}} \boldsymbol{F}(\uvec{u}(\uvec{x},t))=\boldsymbol{S}(\uvec{x},\uvec{u}(\uvec{x},t)), \qquad (\uvec{x},t) \in \Omega\times \mathbb{R}^+_0,
\end{equation}
provided with some initial condition $\uvec{u}(\uvec{x},0)=\uvec{u}_0(\uvec{x})$ on $\Omega$ and some boundary conditions on $\partial \Omega$.

Let us define $\mathcal{T}_h$ a triangulation of $\overline{\Omega}$ and denote with $K$ the general element, which we assume to be convex and closed. Consider the continuous finite element space $V_h = \lbrace g \in C^0(\overline{\Omega}) : g|_K \in \mathbb{P}_M(K) \,~ \forall K \in \mathcal{T}_h \rbrace$, let $\lbrace \varphi_i \rbrace_{i=1,\dots,I}$ be a basis of $V_h$ such that each $\varphi_i$ can be associated to a degree of freedom  $\uvec{x}_i\in \overline{\Omega}$ and has support contained in $\mathcal{K}_i:=\cup_{K \in K_i} K$, where $K_i:=\lbrace K \in \mathcal{T}_h : \uvec{x}_i \in K \rbrace$. Further, we assume the basis functions normalized in such a way that $\sum_{i=1}^I\varphi_i\equiv 1$. The general form of the semidiscrete formulation of a continuous Galerkin FEM scheme consists in finding a solution $\uvec{u}_h(\uvec{x}) = \sum_{i} \uvec{c}_i(t) \varphi_i(\uvec{x})$, with $\uvec{c}_i(t) \in \R^{Q}$ at any time $t$, such that
\begin{equation}
\sum_{K\in K_i}\sum_{\uvec {x}_j \in K} \left(\int_K \varphi_i(\uvec{x}) \varphi_j(\uvec{x}) d \uvec{x}\right)\frac{d}{d t} \uvec{c}_j(t)+ \spacestuff(\uvec{c}(t))=\uvec{0}, \qquad \forall i=1,\dots,I,
\label{systosolve}
\end{equation}
where $\uvec{ST}_i(\uapp)$ are some stabilization terms and the space residuals $\spacestuff(\uvec{c}(t))$ are defined as
\begin{equation}
\spacestuff(\uvec{c}(t))=\sum_{K\in K_i} \int_{K} \left( \div_{\uvec{x}} \boldsymbol{F}(\uapp(\uvec{x},t))-\boldsymbol{S}(\uvec{x},\uapp(\uvec{x},t))\right)\varphi_i(\uvec{x}) d\uvec{x}+\uvec{ST}_i(\uapp),
\label{spst}
\end{equation}
with $\uvec{c}(t)\in \mathbb{R}^{I\times Q}$ containing as components all the $Q$-dimensional vectors $\uvec{c}_i(t)$ associated to the DoFs.

\subsection{DeC for CG}
In this context, the parameter $\Delta$ of the Deferred Correction is the mesh parameter $h$ of the space discretization. We assume CFL conditions on the temporal step size, i.e.,  $\Delta t \leq C h$ for some fixed constant $C>0$.
We will implicitly assume the Bernstein polynomials as basis functions; nevertheless, the method can be extended also to other basis functions provided that some constraints concerning the construction of the operator $\lopdt^1$, specified in the following, are fulfilled.

\subsubsection{Preliminary results}
Here, we will present some useful preliminary results that will be used later to prove the first order accuracy of $\lopdt^1$ and the Lipschitz-continuity-like condition of $\lopdt^1-\lopdt^2$.
In particular, we will prove two propositions, via some intermediate lemmas. We will focus on the Bernstein polynomials; nevertheless the results can be easily extended to other polynomial bases.

Let us consider a general element $K$, the vector space $\mathbb{P}_M(K)$ of the scalar polynomial functions of degree $M$ defined on it and $u\in \mathbb{P}_M(K)$.
We can express $u$ as a linear combination of the Bernstein polynomials $\left\lbrace \varphi_r \right\rbrace_{r=1,\dots,R}$ of degree $M$ defined on the element because they are a basis of $\mathbb{P}_M(K)$.
We have thus 
\begin{equation}
u(\uvec{x})=\sum_{r=1}^R c_r \varphi_r(\uvec{x}), \quad \forall \uvec{x}\in K,
\label{uBernstein}
\end{equation}
where the scalar coefficients $c_r$ are the Bernstein coefficients associated to the DoFs $\uvec{x}_r \in K$.
Another possibility is to express $u$ in terms of the Lagrange basis functions $\left\lbrace \hat{\varphi}_r \right\rbrace_{r=1,\dots,R}$ defined on $K$ which constitute another basis of $\mathbb{P}_M(K)$.
Therefore, we can also write
\begin{equation}
u(\uvec{x})=\sum_{r=1}^R v_r \hat{\varphi}_r(\uvec{x}), \quad \forall \uvec{x}\in K,
\label{uLagrange}
\end{equation}
where the scalar coefficients $v_r$ are the values of $u$ in the DoFs $\uvec{x}_r\in K$.
We define the vector $\uvec{c}\in \mathbb{R}^R$ of the coefficients of $u\in \mathbb{P}_M(K)$ with respect to the Bernstein basis and the vector $\uvec{v}\in \mathbb{R}^R$ of the values of $u$ in all the DoFs of $K$, i.e., the coefficients with respect to the Lagrange basis.

It is always possible to pass from the Bernstein coefficients to the values in the DoFs through the transition matrix $T$ defined as

\begin{equation}
T=\begin{pmatrix}
\varphi_1(\uvec{x}_1) & \varphi_2(\uvec{x}_1) & \dots & \varphi_R(\uvec{x}_1)\\
\varphi_1(\uvec{x}_2) & \varphi_2(\uvec{x}_2) & \dots & \varphi_R(\uvec{x}_2)\\
\vdots & \vdots & \ddots & \vdots \\
\varphi_1(\uvec{x}_R) & \varphi_2(\uvec{x}_R) & \dots & \varphi_R(\uvec{x}_R)
\end{pmatrix}.
\label{transitionmatrix}
\end{equation}
The general element of $T=(T_{ij})_{i,j=1,\dots,R}$ with row index $i$ and column index $j$ is $T_{ij}=\varphi_j(\uvec{x}_i)$ and we have $\uvec{v}=T\uvec{c}$ and $\uvec{c}=T^{-1}\uvec{v}$.
\begin{observation}[Independence of the mesh parameter.]
\label{independece}
Neither the matrix $T$ nor its inverse $T^{-1}$ depend on the size of the element $K$.
They just depend on the spatial dimension $D$ and on the degree $M$.
Once we fix $D$ and $M$, for any specific type of elements, for example the simplices, we have a fixed $T$ and $T^{-1}$.
\end{observation}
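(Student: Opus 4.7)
The plan is to exploit the fact that, on a simplicial mesh, Bernstein polynomials are defined through barycentric coordinates, which are preserved by affine maps between simplices of the same type. Consequently, the whole matrix $T$ can be pulled back to a reference element whose DoFs and basis are fixed once $D$ and $M$ are chosen.

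First, I would fix a reference element $\hat{K}$ (for example, the canonical $D$-simplex) together with its DoFs $\hat{\uvec{x}}_1,\dots,\hat{\uvec{x}}_R$ and its Bernstein basis $\{\hat{\varphi}_r\}_{r=1,\dots,R}$, both defined intrinsically in terms of the barycentric coordinates $\lambda_1,\dots,\lambda_{D+1}$ with respect to the vertices of $\hat{K}$. For any physical element $K\in\tess$ of the same type, there exists a unique affine bijection $F_K\colon\hat{K}\to K$ which sends the reference DoFs to the physical ones, i.e. $\uvec{x}_i=F_K(\hat{\uvec{x}}_i)$. The Bernstein basis on $K$ is then defined as the pullback $\varphi_r=\hat{\varphi}_r\circ F_K^{-1}$, which agrees with the usual barycentric formula written in the vertices of $K$.

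From this setup the claim is immediate. For any row index $i$ and column index $j$,
\begin{equation}
T_{ij}=\varphi_j(\uvec{x}_i)=\hat{\varphi}_j\bigl(F_K^{-1}(F_K(\hat{\uvec{x}}_i))\bigr)=\hat{\varphi}_j(\hat{\uvec{x}}_i),
\end{equation}
so $T$ coincides with the analogous transition matrix built on the reference element and therefore depends only on $D$, $M$, and the chosen type of element (e.g.\ simplices). Since the Bernstein polynomials form a basis of $\mathbb{P}_M(\hat{K})$, the matrix $T$ is invertible, and $T^{-1}$ inherits the same independence from $K$.

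The only delicate point is making sure the Bernstein basis on $K$ is indeed defined via this pullback construction, rather than in some $K$-dependent way that would spoil affine invariance. This is, however, the standard convention, and the same argument carries over verbatim to other element types (e.g.\ tensor-product elements) as long as the basis on $K$ is obtained by pullback from a reference element through an affine map that maps DoFs to DoFs.
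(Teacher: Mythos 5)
Your argument is correct. The paper states this independence as a remark without any proof, and your affine-pullback construction (Bernstein basis and DoFs defined via barycentric coordinates on a reference element, transported by the affine map $F_K$ so that $T_{ij}=\hat{\varphi}_j(\hat{\uvec{x}}_i)$) is precisely the standard justification the authors implicitly rely on; the invertibility of $T$, and hence the same independence for $T^{-1}$, follows from the unisolvence of the nodal DoFs, which the paper already assumes when writing $\uvec{c}=T^{-1}\uvec{v}$.
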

It is clear that the sum of the elements of each row of $T$ is equal to $1$, in fact
\begin{equation}
\sum_{j=1}^R T_{ij}=\sum_{j=1}^R \varphi_j(\uvec{x}_i)=1, \quad \forall i=1,\dots,R.
\label{sumrowT}
\end{equation}
This is due to the assumption on the basis functions, which are normalized in such a way that that
\begin{equation}
\sum_{j=1}^R \varphi_j(\uvec{x})\equiv 1, \quad \forall \uvec{x} \in K.
\label{normalization}
\end{equation}
Also its inverse $T^{-1}$ enjoys the same property as we will prove in the next lemma.

\begin{lemma}\label{prop:sum_row_inverse}
The sum of the elements of each row of $T^{-1}$, inverse of the transition matrix defined in \eqref{transitionmatrix}, is equal to $1$.
\end{lemma}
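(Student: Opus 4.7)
The plan is to reinterpret the row-sum condition \eqref{sumrowT} in matrix-vector form, so that the claim becomes a one-line consequence of the invertibility of $T$.

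First, I would observe that the identity $\sum_{j=1}^R T_{ij}=1$ for every $i$ is exactly the statement $T\uvec{1}=\uvec{1}$, where $\uvec{1}\in\R^R$ denotes the vector whose entries are all equal to $1$. In other words, $\uvec{1}$ is an eigenvector of $T$ associated with the eigenvalue $1$. Now $T$ is the transition matrix between two bases of the finite-dimensional space $\mathbb{P}_M(K)$ and is therefore invertible. Applying $T^{-1}$ to both sides of $T\uvec{1}=\uvec{1}$ yields $T^{-1}\uvec{1}=\uvec{1}$, which is precisely the assertion that each row of $T^{-1}$ sums to $1$.

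Equivalently, and perhaps more illuminatingly, one can derive the same conclusion by evaluating the change-of-basis relation on a single cleverly chosen polynomial. Consider the constant function $u\equiv 1\in\mathbb{P}_M(K)$. On the one hand, its vector of nodal values is $\uvec{v}=\uvec{1}$, since $u$ takes the value $1$ at each DoF. On the other hand, the partition-of-unity normalization \eqref{normalization} gives $u=\sum_{r=1}^R 1\cdot\varphi_r$, so its vector of Bernstein coefficients is also $\uvec{c}=\uvec{1}$. Plugging into $\uvec{c}=T^{-1}\uvec{v}$ produces $\uvec{1}=T^{-1}\uvec{1}$, which is the thesis.

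There is essentially no technical obstacle: the only subtlety worth emphasising in the write-up is that the argument uses \emph{only} the partition-of-unity property \eqref{normalization} and the invertibility of $T$, and hence applies verbatim to any other (nodal or modal) basis of $\mathbb{P}_M(K)$ normalized as in \eqref{normalization}, not merely to the Bernstein basis. This matches the remark in \cref{independece} that $T$ and $T^{-1}$ are intrinsic to the chosen basis and independent of the geometric size of $K$.
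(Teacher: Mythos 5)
Your proof is correct and its first paragraph is exactly the paper's argument: rewrite the row-sum identity as $T\uvec{1}=\uvec{1}$ and left-multiply by $T^{-1}$. The alternative derivation via the constant polynomial is a nice additional perspective but does not change the substance.
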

\begin{proof}
Let us observe that proving the thesis is equivalent to prove that $T^{-1}\uvec{1}=\uvec{1}$ where $\uvec{1}\in \mathbb{R}^{R}$ is a vector with all the entries equal to $1$.
From \eqref{sumrowT} we have that $T\uvec{1}=\uvec{1}$.
Thanks to the previous equality, we have that
\begin{equation}
T^{-1}\uvec{1}=T^{-1}T\uvec{1}=\uvec{1}
\label{eq:final_theorem_T}
\end{equation}
which is the thesis.
\end{proof}

The previous result will be used to prove the following lemma.

\begin{lemma}\label{prop:difference_coefficients}
For any polynomial $u\in \mathbb{P}_M(K)$ such that
\begin{equation}
u(\uvec{x})=\sum_{r=1}^R c_r \varphi_r(\uvec{x})=\sum_{r=1}^R v_r \hat{\varphi}_r(\uvec{x}), \quad \forall \uvec{x}\in K,
\label{uBernsteinLagrange}
\end{equation}
where $\varphi_r$ are the Bernstein polynomials of $\mathbb{P}_M(K)$, $c_r$ the Bernstein coefficients, $\hat{\varphi}_r$ the Lagrange polynomials of $\mathbb{P}_M(K)$ and $v_r$ the Lagrange coefficients, it holds that
\begin{equation}
\sup_{i,j=1,\dots,R}{\vert c_{i}-c_{j} \vert}\leq \tilde{C}\sup_{i,j=1,\dots,R}{\vert v_{i}-v_{j} \vert},
\label{fromcoefficientstovalues}
\end{equation}
where $\tilde{C}>0$ is independent of the size and aspect ratio of $K$.
\end{lemma}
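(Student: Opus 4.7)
The plan is to exploit the change of basis $\uvec{c}=T^{-1}\uvec{v}$ together with the row-sum property of $T^{-1}$ established in \cref{prop:sum_row_inverse}, and then to use \cref{independece} to conclude that the resulting constant is geometry-independent.

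First I would write out the difference of two Bernstein coefficients in terms of the nodal values:
\begin{equation}
c_i-c_j=\sum_{k=1}^{R}\left((T^{-1})_{ik}-(T^{-1})_{jk}\right)v_k.
\end{equation}
The crucial observation is that, thanks to \cref{prop:sum_row_inverse}, each row of $T^{-1}$ sums to $1$, so the coefficients $(T^{-1})_{ik}-(T^{-1})_{jk}$ sum to zero over $k$. This means that replacing $v_k$ by $v_k-v_{k_0}$ for any fixed index $k_0$ leaves the expression unchanged. Choosing, for instance, $k_0$ to be a fixed reference index, I can rewrite
\begin{equation}
c_i-c_j=\sum_{k=1}^{R}\left((T^{-1})_{ik}-(T^{-1})_{jk}\right)(v_k-v_{k_0}).
\end{equation}

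Then the triangle inequality immediately yields
\begin{equation}
\abs{c_i-c_j}\leq \left(\sum_{k=1}^R \abs{(T^{-1})_{ik}-(T^{-1})_{jk}}\right)\sup_{k,l=1,\dots,R}\abs{v_k-v_l},
\end{equation}
and taking the supremum over $i,j$ gives \eqref{fromcoefficientstovalues} with
\begin{equation}
\tilde C=\sup_{i,j=1,\dots,R}\sum_{k=1}^R\abs{(T^{-1})_{ik}-(T^{-1})_{jk}}.
\end{equation}

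The final step is to invoke \cref{independece}: the matrix $T^{-1}$ depends only on the spatial dimension $D$ and polynomial degree $M$ (and the type of element), but not on the size or aspect ratio of $K$. Hence $\tilde C$ inherits the same independence, which is exactly the required conclusion. I do not expect any real obstacle here: the cancellation coming from $T^{-1}\uvec{1}=\uvec{1}$ is what converts a bound on $|v_k|$ into a bound on the oscillations $|v_k-v_l|$, and the geometric independence is handed over by the preceding remark.
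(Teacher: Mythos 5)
Your proposal is correct and follows essentially the same route as the paper: both proofs hinge on $\uvec{c}=T^{-1}\uvec{v}$, the row-sum property $T^{-1}\uvec{1}=\uvec{1}$ from \cref{prop:sum_row_inverse} to convert the expression into a combination of differences $v_k-v_\ell$, the triangle inequality, and \cref{independece} for geometry-independence of the constant. Your variant (subtracting a single reference value $v_{k_0}$ rather than the paper's symmetrized coefficients $\lambda_{k,\ell}^{i,j}$) is a minor simplification that even yields a slightly sharper constant, but it is not a different argument.
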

\begin{proof}
The proof is a straightforward consequence of \cref{prop:sum_row_inverse}. From the fact that $\uvec{c}=T^{-1}\uvec{v}$ we know that every Bernstein coefficient $c_r$ can be expressed as a linear combination of the values $v_k$ in the DoFs through the coefficients of the row $r$ of the matrix $T^{-1}$
\begin{equation}
c_{i}= \sum_{k=1}^R (T^{-1})_{ik}v_k, \quad c_{j}=\sum_{k=1}^R (T^{-1})_{jk}v_k
\label{eq:c_combination_of_v}
\end{equation}
and therefore
\begin{equation}
\vert c_i-c_j \vert=\left\vert \sum_{k=1}^R (T^{-1})_{ik}v_k- \sum_{k=1}^R (T^{-1})_{jk}v_k \right\vert.
\label{combinationvalues}
\end{equation}
Now, from \cref{prop:sum_row_inverse}, we know that the coefficients $(T^{-1})_{rk}$ are such that
\begin{equation}
\sum_{k=1}^R(T^{-1})_{rk}=1 \quad \forall r=1,\dots, R.
\end{equation}
This is in particular true for $r=i$ and $r=j$ and so there exist some coefficients $\lambda_{k,\ell}^{i,j}$, depending on $i$ and $j$, such that \eqref{combinationvalues} can be written as
\begin{equation}
\vert c_{i}-c_{j}\vert=\left\vert \sum_{k=1}^R (T^{-1})_{ik}v_k-\sum_{k=1}^R (T^{-1})_{jk}v_k \right\vert=\left\vert \sum_{k,\ell=1}^R \lambda_{k,\ell}^{i,j}(v_k-v_\ell) \right\vert.
\label{combinationvaluespro}
\end{equation}
One simple choice of these coefficients is given by $\lambda_{k,\ell}^{i,j} = \frac{(T^{-1})_{ik}-(T^{-1})_{jk}}{R}$ and a simple computation can be used to prove it. This might lead to suboptimal values of the estimations.
The coefficients $\lambda_{k,\ell}^{i,j}$, like the coefficients $T_{ij}$ and $(T^{-1})_{ij},$ do not depend on the size of $K$, and, thus, they can be bounded by a positive constant $C_\lambda$, which depends just on the type of the element considered. Then, thanks to the triangular inequality, \eqref{combinationvaluespro} gives
\begin{equation}
\vert c_{i}-c_{j}\vert=\left\vert \sum_{k,\ell=1}^R \lambda_{k,\ell}^{i,j}(v_k-v_\ell) \right\vert\leq \sum_{k,\ell=1}^R \vert \lambda_{k,\ell}^{i,j} \vert \vert v_k-v_\ell \vert \leq C_\lambda \sum_{k,\ell=1}^R \vert v_k-v_\ell \vert.
\label{combinationvaluesprofinal}
\end{equation}
Since the number of dimensions $D$ and the degree $M$ are fixed, also $R$ is fixed and so the number of terms in the sum. Therefore, from \eqref{combinationvaluesprofinal} we get
\begin{equation}
\vert c_{i}-c_{j}\vert\leq C_\lambda \sum_{k,\ell=1}^R \vert v_k-v_\ell \vert \leq \tilde{C} \sup_{i,j=1,\dots,R}{\vert v_{i}-v_{j} \vert}
\label{eq:not_needed1}
\end{equation}
for some $\tilde{C}=C_\lambda R^2$ independent of the size of $K$.
\end{proof}

This allows to prove the following result.
\begin{lemma}\label{prop:bound_H1}
For any polynomial $u\in \mathbb{P}_M(K)$ such that
$
u(\uvec{x})=\sum_{r=1}^R c_r \varphi_r(\uvec{x}),\, \forall \uvec{x}\in K,
$
where $\varphi_r$ are the Bernstein polynomials of $\mathbb{P}_M(K)$ and $c_r$ the Bernstein coefficients, then
\begin{equation}
\sup_{i,j=1,\dots,R}{\vert c_{i}-c_{j} \vert}\leq \tilde{C}h\norm{ \norm{\nabla_{\uvec{x}}u}_1 }_{L^\infty(K)}
\label{crucialinequality}
\end{equation}
where $\tilde{C}$ is the positive constant in \eqref{fromcoefficientstovalues} (and thus independent of the size of $K$, dependent just on the number of dimensions $D$, on the degree $M$ and on the type of the element) and $h$ is such that $diam(K) \leq h$.
The norm $\norm{ \cdot }_1$ is the $1$-norm in $\mathbb{R}^D$, the norm $\norm{ \cdot }_{L^\infty(K)}$ is the $L^\infty$ norm over $K$. 
\end{lemma}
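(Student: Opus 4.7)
The plan is to chain the previous Lemma \ref{prop:difference_coefficients} with a standard first-order Taylor/fundamental-theorem-of-calculus bound on $u$ in terms of its gradient. Since the Bernstein coefficient differences are already controlled by the differences of nodal values $v_r = u(\uvec{x}_r)$ (via $\tilde{C}$, which depends only on $D$, $M$ and the element type), the remaining work is purely analytic on $u$ itself.

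First, I apply \cref{prop:difference_coefficients} to reduce the thesis to showing
\begin{equation*}
\sup_{i,j=1,\dots,R}\vert v_i - v_j\vert \leq h\, \norm{\norm{\nabla_{\uvec{x}} u}_1}_{L^\infty(K)}.
\end{equation*}
Fix two DoFs $\uvec{x}_i,\uvec{x}_j \in K$. Since $K$ is convex (as stated at the beginning of \cref{sec:CG}), the segment joining them lies in $K$, so by the fundamental theorem of calculus
\begin{equation*}
v_j - v_i = u(\uvec{x}_j) - u(\uvec{x}_i) = \int_0^1 \nabla_{\uvec{x}} u\bigl(\uvec{x}_i + t(\uvec{x}_j - \uvec{x}_i)\bigr) \cdot (\uvec{x}_j - \uvec{x}_i)\, dt.
\end{equation*}

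Next, I apply Hölder's inequality in $\mathbb{R}^D$ with the dual pairing $(1,\infty)$ pointwise in $t$, obtaining
\begin{equation*}
\vert v_j - v_i\vert \leq \norm{\norm{\nabla_{\uvec{x}} u}_1}_{L^\infty(K)}\; \norm{\uvec{x}_j - \uvec{x}_i}_\infty.
\end{equation*}
Then I use $\norm{\uvec{x}_j - \uvec{x}_i}_\infty \leq \norm{\uvec{x}_j - \uvec{x}_i}_2 \leq \mathrm{diam}(K)\leq h$, yielding the required bound on nodal value differences. Absorbing the factor $\tilde C$ from \cref{prop:difference_coefficients} gives \eqref{crucialinequality}.

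I do not expect a real obstacle here: the content of the lemma is already packed into \cref{prop:difference_coefficients}, and the only remaining step is the elementary line-integral estimate on a convex set. The small point to be careful about is just the choice of dual norm pair so that the gradient is measured in the $\ell^1$ norm, matching the statement, which is handled by the Hölder step above.
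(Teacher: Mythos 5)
Your proposal is correct and follows essentially the same route as the paper: both reduce the claim to \cref{prop:difference_coefficients} and then bound the nodal-value differences by $h\norm{\norm{\nabla_{\uvec{x}}u}_1}_{L^\infty(K)}$, the only difference being that the paper invokes this last estimate as a fact from ``basic analysis'' while you spell it out via the fundamental theorem of calculus on the segment (using convexity of $K$) and the $(1,\infty)$ H\"older pairing.
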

\begin{proof}
This is a consequence of \cref{prop:difference_coefficients}, in fact, from basic analysis, we know that for any smooth scalar function $f\in C^1(K)$  
\begin{equation}
\sup_{\uvec{x},\uvec{y}\in K} \vert f(\uvec{x})-f(\uvec{y}) \vert \leq h \norm{ \norm{\nabla_{\uvec{x}}f}_1 }_{L^\infty(K)},
\label{not_needed3}
\end{equation}
where we remark that $K$ is assumed to be closed.
Thus for the polynomial $u$, thanks to the inequality \eqref{fromcoefficientstovalues}, we have
\begin{equation}
\sup_{i,j=1,\dots,R}{\vert c_{i}-c_{j} \vert}\leq \tilde{C}\sup_{i,j=1,\dots,R}{\vert v_{i}-v_{j} \vert} \leq \tilde{C}h\norm{ \norm{\nabla_{\uvec{x}}u}_1 }_{L^\infty(K)},
\label{not_needed4}
\end{equation}
because $v_{r}$ are the values of $u$ in the DoFs of $K$.
\end{proof}

We will continue now with the first proposition of this section, which will be used later in the proofs of the first order accuracy of $\lopdt^1$ and of the Lipschitz-continuity-like condition of $\lopdt^1-\lopdt^2$.
\begin{proposition}[Mass lumping accuracy]
\label{finalresult}
Let us consider a scalar continuous piecewise polynomial function $u\in V_h$.
We can write $u$ as a linear combination of the Bernstein polynomials $\left\lbrace \varphi_i \right\rbrace_{i=1,\dots,I}$ associated to the tessellation which constitute a basis of $V_h$, i.e., $
u(\uvec{x})=\sum_{i=1}^I c_i \varphi_i(\uvec{x}) \quad \forall \uvec{x}\in \overline{\Omega}
$ with $c_i$ scalar coefficients.
Then, we have  $\forall i=1,\dots,I$ that 
\begin{align}
\label{finalresultscalar}
& \left\vert \sum_{K\in K_i} c_i \int_K\!\! \varphi_i(\uvec{x})d\uvec{x}-\sum_{K\in K_i}\sum_{\uvec{x}_j\in K}   c_j\int_K\!\! \varphi_i(\uvec{x})\varphi_j(\uvec{x})d\uvec{x}  \right\vert  \leq \hat{C}h\norm{ \norm{\nabla_{\uvec{x}}u}_1 }_{L^\infty(\mathcal{K}_i)}  \int_{\mathcal{K}_i}\!\! \left\vert \varphi_i(\uvec{x}) \right\vert d\uvec{x}, 
\end{align}
with $h=\max_{K\in\mathcal{T}_h} diam(K)$  and  $\hat{C}$ being a constant independent of $h$, dependent just on the dimension $D$, on the degree $M$ and on the type of the elements in the mesh.
\end{proposition}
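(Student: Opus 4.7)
The plan is to reduce the left-hand side to a per-element expression in which the previously established coefficient estimate in \cref{prop:bound_H1} can be applied directly, after which the integrals collapse thanks to the partition-of-unity property of the Bernstein basis.

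First, I would work element by element. Fixing $K\in K_i$, the partition-of-unity identity $\sum_{\uvec{x}_j\in K}\varphi_j(\uvec{x})\equiv 1$ on $K$ lets me rewrite
\begin{equation}
c_i\int_K\varphi_i(\uvec{x})d\uvec{x}=\sum_{\uvec{x}_j\in K}c_i\int_K\varphi_i(\uvec{x})\varphi_j(\uvec{x})d\uvec{x},
\end{equation}
so that the local contribution becomes a telescoping-style expression
\begin{equation}
\sum_{\uvec{x}_j\in K}(c_i-c_j)\int_K\varphi_i(\uvec{x})\varphi_j(\uvec{x})d\uvec{x}.
\end{equation}
This is the key trick: the difference of the consistent and lumped integrals reorganizes into a sum of coefficient differences against the (nonnegative) products of basis functions.

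Next, I would apply \cref{prop:bound_H1} to each coefficient difference restricted to $K$, obtaining $\vert c_i-c_j\vert\leq \tilde{C}h\norm{\norm{\nabla_{\uvec{x}}u}_1}_{L^\infty(K)}$ with $\tilde{C}$ independent of $h$ (this requires viewing $u|_K\in\mathbb{P}_M(K)$ and noting that the $c_r$ that appear are exactly the Bernstein coefficients of $u$ on $K$; $h$ bounds $\operatorname{diam}(K)$). Pulling this local bound out of the sum and using the nonnegativity of the Bernstein basis together with $\sum_{\uvec{x}_j\in K}\varphi_j\equiv 1$ on $K$, the remaining sum of integrals collapses to
\begin{equation}
\sum_{\uvec{x}_j\in K}\int_K\varphi_i(\uvec{x})\varphi_j(\uvec{x})d\uvec{x}=\int_K\varphi_i(\uvec{x})d\uvec{x}=\int_K\vert\varphi_i(\uvec{x})\vert d\uvec{x}.
\end{equation}

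Finally, I would sum over $K\in K_i$, bound $\norm{\norm{\nabla_{\uvec{x}}u}_1}_{L^\infty(K)}$ by the corresponding norm on the patch $\mathcal{K}_i=\cup_{K\in K_i}K$, and reassemble $\sum_{K\in K_i}\int_K\vert\varphi_i\vert d\uvec{x}=\int_{\mathcal{K}_i}\vert\varphi_i\vert d\uvec{x}$, which yields the claimed inequality with $\hat{C}=\tilde{C}$. There is no real obstacle here: the work was already done in \cref{prop:difference_coefficients,prop:bound_H1}; the only conceptual step is spotting that the partition-of-unity rewrite turns the lumping error into a weighted sum of coefficient differences, after which nonnegativity of the Bernstein basis makes the remaining integral bookkeeping immediate. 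The only minor care concerns the dependence of $\tilde{C}$: by \cref{independece} it depends only on $D$, $M$ and the element type, which is exactly the form allowed for $\hat{C}$ in the statement.
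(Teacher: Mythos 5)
Your proof is correct and follows essentially the same route as the paper: the partition-of-unity rewrite of the lumping error as $\sum_{K\in K_i}\sum_{\uvec{x}_j\in K}(c_i-c_j)\int_K\varphi_i\varphi_j\,d\uvec{x}$ followed by an application of \cref{prop:bound_H1} is exactly the paper's argument. The only difference is in the final bookkeeping: you exploit nonnegativity and $\sum_j\varphi_j\equiv 1$ to collapse $\sum_j\int_K\varphi_i\varphi_j=\int_K\varphi_i$ exactly, obtaining the sharper constant $\hat{C}=\tilde{C}$, whereas the paper bounds $\abs{\varphi_j}\leq C_0$ and counts the $R$ local DoFs to get $\hat{C}=RC_0\tilde{C}$ --- a looser constant, but one whose derivation survives for non-nonnegative bases such as Lagrange polynomials, which the paper explicitly wants to cover.
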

\begin{proof}
We will assume at first all the elements of the tessellation to be of the same type but this hypothesis can be relaxed to the general case with different types of elements.

Let us focus on the left-hand side of \eqref{finalresultscalar}. Thanks to the normalization \eqref{normalization} of the basis functions and to the fact that the only basis functions that are not identically zero in the element $K$ are the ones associated to the DoFs contained in that element, we can write
\begin{equation}
	\begin{split}
&\left\vert \sum_{K\in K_i}c_i \int_K\!\! \varphi_i(\uvec{x})d\uvec{x} -\sum_{K\in K_i} \sum_{\uvec{x}_j\in K} c_j \int_K\!\! \varphi_i(\uvec{x})\varphi_j(\uvec{x})d\uvec{x}   \right\vert
=\left\vert \sum_{K\in K_i}\sum_{\uvec{x}_j\in K}(c_i- c_j) \int_K \!\!\varphi_i(\uvec{x}) \varphi_j(\uvec{x}) d\uvec{x}  \right\vert. 
	\end{split}
\end{equation}

Now, thanks to the triangular inequality, to the fact that the absolute value of the basis functions $\varphi_j$ can be bounded by a constant 
$C_0$, independent of the size of $K$, dependent just on the dimension $D$, on the degree $M$ and on the type of the elements in the tessellation and also to the fact that the number $R$ of DoFs $\uvec{x}_j$ in each element $K$ is fixed since $D$ and $M$ are fixed, we can write
\begin{align}
\begin{split}
&\left\vert \sum_{K\in K_i}\sum_{\uvec{x}_j\in K} (c_i- c_j) \int_K \varphi_i(\uvec{x}) \varphi_j(\uvec{x}) d\uvec{x} \right\vert \leq  \sum_{K\in K_i}\sum_{\uvec{x}_j\in K}  \vert c_i- c_j \vert \left\vert \int_K \varphi_i(\uvec{x}) \varphi_j(\uvec{x}) d\uvec{x} \right\vert \\
\leq & \sum_{K\in K_i}\sum_{\uvec{x}_j\in K}  \sup_{\uvec{x}_\ell \in K} \vert c_i- c_\ell \vert \int_K \vert \varphi_i(\uvec{x}) \vert \vert \varphi_j(\uvec{x}) \vert d\uvec{x}    \leq 
\sum_{K\in K_i}\sum_{\uvec{x}_j\in K} C_0 \sup_{\uvec{x}_\ell \in K} \vert c_i- c_\ell \vert \int_K \vert \varphi_i(\uvec{x}) \vert  d\uvec{x}  \\
\leq &
\sum_{K\in K_i}R C_0  \sup_{\uvec{x}_\ell \in K} \vert c_i- c_\ell \vert \int_K \vert \varphi_i(\uvec{x}) \vert  d\uvec{x}.
\end{split}
\end{align}

By applying the previous proposition \eqref{crucialinequality} and from the fact that by definition $\mathcal{K}_i=\cup_{K\in K_i}K$, we can continue the sequence of inequalities and get
\begin{align}
\begin{split}
&\sum_{K\in K_i}R C_0 \sup_{\uvec{x}_\ell \in K} \vert c_i- c_\ell \vert  \int_K \vert \varphi_i(\uvec{x}) \vert  d\uvec{x} \leq
\sum_{K\in K_i}R C_0 \, \tilde{C}h\norm{ \norm{\nabla_{\uvec{x}}u}_1 }_{L^\infty(K)} \int_K \vert \varphi_i(\uvec{x}) \vert  d\uvec{x}\\
\leq& R C_0 \tilde{C}h \norm{ \norm{\nabla_{\uvec{x}}u}_1 }_{L^\infty(\mathcal{K}_i)}\sum_{K\in K_i}  \int_K \vert \varphi_i(\uvec{x}) \vert  d\uvec{x}=R C_0 \tilde{C}h\norm{ \norm{\nabla_{\uvec{x}}u}_1 }_{L^\infty(\mathcal{K}_i)} \int_{\mathcal{K}_i} \left\vert \varphi_i(\uvec{x}) \right\vert d\uvec{x}. \label{eq:last_inequality}
\end{split}
\end{align}

We take $\hat{C}=R C_0 \tilde{C}$ and we have the thesis, in fact, none of $R$, $C_0$ and $\tilde{C}$ depend on $h$, but they just depend on the dimension $D$, on the degree $M$ and on the type of the elements in the tessellation.

We remark that we assumed that all the elements of the tessellation were of the same type. To deal with the general case in which we have different types of elements we suffice to take $\tilde{C}$ as the maximum of the coefficients $\tilde{C}$ of \cref{prop:bound_H1} associated to the different types of elements and $R$ as the highest number of degrees of freedom in a single element.

\end{proof}
Before going ahead let us make some useful observations.
\begin{observation}
Since the Bernstein basis functions are not negative, we can actually remove the absolute value inside the integral in \eqref{finalresultscalar}.
We left it on purpose to be more general.
In fact, it is easy to see that what is proved in this section is actually not limited to the specific case of Bernstein polynomials; the results can be easily extended to other polynomial bases, like for example the Lagrange polynomials (for which the matrix $T$ is the identity and the constant $\tilde{C}=1$) provided that the normalization \eqref{normalization} holds, i.e. $\sum_{i=1}^I\varphi(\uvec{x})\equiv 1$.
\end{observation}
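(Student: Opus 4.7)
The plan is to verify the two assertions in the observation separately, since neither requires a full re-derivation; each follows by inspecting which properties of the Bernstein basis were actually used in the chain \cref{prop:sum_row_inverse} $\to$ \cref{prop:difference_coefficients} $\to$ \cref{prop:bound_H1} $\to$ \cref{finalresult}.

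For the first claim, I would simply recall that the Bernstein polynomials of degree $M$ on a simplex (or tensor-product cell) are, by construction, convex combinations of monomials with non-negative coefficients, hence $\varphi_i(\uvec{x})\geq 0$ on $\overline{\Omega}$. Consequently $|\varphi_i(\uvec{x})|=\varphi_i(\uvec{x})$ pointwise, and the absolute value can be dropped both in the intermediate bound $\int_K|\varphi_i||\varphi_j|d\uvec{x}$ and in the final term $\int_{\mathcal{K}_i}|\varphi_i|d\uvec{x}$ of \eqref{finalresultscalar}. I would stress that the proof uses the absolute value only through the triangular inequality applied pointwise; since it is an upper bound, retaining $|\cdot|$ is harmless but unnecessary here.

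For the second claim, I would scan back through the argument and isolate exactly where properties of the Bernstein basis intervene. The normalization $\sum_{j=1}^R\varphi_j\equiv 1$ is used in \eqref{sumrowT} (so that $T\uvec{1}=\uvec{1}$, which drives \cref{prop:sum_row_inverse}), and the uniform bound $|\varphi_j|\leq C_0$ is used in the chain of inequalities of \cref{finalresult}. Both properties hold for any polynomial basis $\{\varphi_r\}$ of $\mathbb{P}_M(K)$ that is normalized in the partition-of-unity sense and consists of functions bounded on the reference element (which is automatic for polynomials of fixed degree on a reference cell, by rescaling). Hence the entire derivation goes through verbatim, with $\tilde{C}$ and $C_0$ depending on the chosen basis but not on $h$.

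For the Lagrange case specifically, I would observe that if one chooses the Lagrange basis itself in the role of the $\varphi_r$ in \eqref{uBernstein}, then the Bernstein coefficients $c_r$ coincide with the nodal values $v_r$, so the matrix $T$ in \eqref{transitionmatrix} satisfies $T_{ij}=\hat\varphi_j(\uvec{x}_i)=\delta_{ij}$, i.e., $T=T^{-1}=I$. Then \eqref{fromcoefficientstovalues} reduces to the identity $|c_i-c_j|=|v_i-v_j|$ and \cref{prop:difference_coefficients} holds trivially with $\tilde C=1$; the remainder of the argument, relying only on the partition-of-unity and a uniform bound on the Lagrange basis, yields \eqref{finalresultscalar} unchanged. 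The only mild subtlety to flag is that the normalization $\sum_j\hat\varphi_j\equiv 1$ is indeed standard for nodal Lagrange bases (as it expresses the exact reproduction of constants), so the assumption is not restrictive. I do not foresee a real obstacle in either part; the observation is essentially a bookkeeping remark making explicit the minimality of the hypotheses actually used.
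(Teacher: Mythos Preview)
Your elaboration is correct and matches the paper's intent; note however that in the paper this is an \emph{observation} (a remark) rather than a proved statement, so there is no formal proof to compare against. Your identification of the two ingredients actually used---the partition-of-unity normalization \eqref{normalization} driving \cref{prop:sum_row_inverse}, and the uniform bound $|\varphi_j|\leq C_0$ on the reference element---together with the trivial specialization $T=I$, $\tilde{C}=1$ for the Lagrange basis, is exactly what the authors have in mind when they write ``it is easy to see.''
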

\begin{observation}
The final result \eqref{finalresultscalar}, which has been proven for a scalar polynomial $u \in V_h$, can be easily extended to the vectorial case by applying it componentwise.
If $\uvec{u}\in V_h^Q$, then we have
\begin{align}
\uvec{u}(\uvec{x})=\sum_{i=1}^I\uvec{c}_i\varphi_i(\uvec{x}) \quad \forall \uvec{x}\in \overline{\Omega}
\end{align}
with $\uvec{c}_i\in \mathbb{R}^Q$ $\forall i=1,\dots,I$ being $Q$-dimensional vectors of coefficients and $\left\lbrace \varphi_i\right\rbrace_{i=1,\dots,I}$ the Bernstein basis and it holds that
\begin{align}
&\norm{ \sum_{K\in K_i}\left(\int_K \varphi_i(\uvec{x})d\uvec{x}\right) \uvec{c}_i-\sum_{K\in K_i}\sum_{\uvec{x}_j\in K} \left(\int_K \varphi_i(\uvec{x})\varphi_j(\uvec{x})d\uvec{x} \right) \uvec{c}_j }_\infty \nonumber \\
&\leq \hat{C}h\norm{\norm{ \norm{\nabla_{\uvec{x}}\uvec{u}}_1 }_{L^\infty(\mathcal{K}_i)}}_\infty  \int_{\mathcal{K}_i} \left\vert \varphi_i(\uvec{x}) \right\vert d\uvec{x}  \quad \forall i=1,\dots,I
\label{finalresultvectorial}
\end{align}
where the norms $\norm{\cdot}_1$ and $\norm{\cdot}_{L^\infty(\mathcal{K}_i)}$ are applied to each scalar component while the norm $\norm{\cdot}_\infty$ is on $\mathbb{R}^Q$.

The key point is that the result \eqref{finalresultscalar} is uniform with respect to all the components of $\uvec{u}$ and so we can easily take the infinity norm of both sides to pass from the scalar to the vectorial case.

\end{observation}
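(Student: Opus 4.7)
The plan is to deduce the vector-valued estimate from the scalar result \eqref{finalresultscalar} by applying it independently to each of the $Q$ scalar components of $\uvec{u}$ and then combining the resulting $Q$ scalar bounds through the infinity norm on $\R^Q$. No new analytical machinery is needed; the whole content lies in a clean bookkeeping of indices and norms, together with the observation that the scalar constant $\hat{C}$ carries no $Q$-dependence.

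First, I would decompose $\uvec{u}\in V_h^Q$ componentwise as $\uvec{u}=(u^{(1)},\dots,u^{(Q)})^T$ with each $u^{(q)}\in V_h$. Since the basis $\{\varphi_i\}$ is scalar-valued, the vector expansion $\uvec{u}(\uvec{x})=\sum_{i=1}^I \uvec{c}_i\varphi_i(\uvec{x})$ is equivalent to the $Q$ scalar expansions $u^{(q)}(\uvec{x})=\sum_{i=1}^I c_i^{(q)}\varphi_i(\uvec{x})$, where $c_i^{(q)}$ denotes the $q$-th component of $\uvec{c}_i$. Consequently, the $q$-th scalar component of the vector under the $\norm{\cdot}_\infty$ on the left-hand side of \eqref{finalresultvectorial} is exactly
\begin{equation*}
\sum_{K\in K_i} c_i^{(q)}\int_K \varphi_i(\uvec{x})\,d\uvec{x}-\sum_{K\in K_i}\sum_{\uvec{x}_j\in K} c_j^{(q)}\int_K \varphi_i(\uvec{x})\varphi_j(\uvec{x})\,d\uvec{x},
\end{equation*}
to which the scalar \cref{finalresult} applies.

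Second, I would invoke \cref{finalresult} component-by-component, obtaining, for every $q=1,\dots,Q$ and every $i=1,\dots,I$,
\begin{equation*}
\left|\sum_{K\in K_i} c_i^{(q)}\int_K \varphi_i\,d\uvec{x}-\sum_{K\in K_i}\sum_{\uvec{x}_j\in K} c_j^{(q)}\int_K \varphi_i\varphi_j\,d\uvec{x}\right|\le \hat{C}h\,\norm{\norm{\nabla_{\uvec{x}}u^{(q)}}_1}_{L^\infty(\mathcal{K}_i)}\int_{\mathcal{K}_i}|\varphi_i(\uvec{x})|\,d\uvec{x}.
\end{equation*}
The central point, and the only thing one must verify carefully, is that the constant $\hat{C}$ produced by \cref{finalresult} depends only on $D$, $M$ and the element type, and in particular not on the component index $q$. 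Therefore a single $\hat{C}$ bounds all $Q$ scalar inequalities uniformly.

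Third, I would take the maximum over $q\in\{1,\dots,Q\}$ of both sides. On the left, the maximum is precisely the $\infty$-norm on $\R^Q$ of the vector-valued expression, i.e.\ the left-hand side of \eqref{finalresultvectorial}. On the right, the factors $\hat{C}h$ and $\int_{\mathcal{K}_i}|\varphi_i|\,d\uvec{x}$ are independent of $q$ and pull out, leaving $\max_{q}\norm{\norm{\nabla_{\uvec{x}}u^{(q)}}_1}_{L^\infty(\mathcal{K}_i)}$, which is, by the convention stated in the observation (inner $\norm{\cdot}_1$ and $\norm{\cdot}_{L^\infty(\mathcal{K}_i)}$ applied componentwise, outer $\norm{\cdot}_\infty$ on $\R^Q$), exactly $\norm{\norm{\norm{\nabla_{\uvec{x}}\uvec{u}}_1}_{L^\infty(\mathcal{K}_i)}}_\infty$. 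This yields \eqref{finalresultvectorial} and completes the extension. The only possible source of confusion — and thus the main point deserving attention rather than a genuine obstacle — is correctly parsing the three nested norms; once the convention is fixed, the argument is a direct componentwise lift of the scalar bound.
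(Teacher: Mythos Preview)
Your proposal is correct and follows exactly the approach indicated in the paper: apply the scalar bound \eqref{finalresultscalar} componentwise, use that the constant $\hat{C}$ is independent of the component index $q$, and then take the infinity norm over $q$ on both sides. The paper's own justification is essentially the single sentence stating that the scalar result is uniform across components, and your writeup simply makes this explicit.
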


We focus now on another intermediate lemma before proving the second and final proposition of this section.

\begin{lemma}\label{lem:equivalence_1_inf_element}
	Let $z\in C^1(K)$ and assume that its gradient is bounded in such a way that $\norm{ \norm{\nabla_{\uvec{x}}z}_1}_{L^\infty(K)}\leq C_g$. Then, for $K$ small enough it holds
	\begin{equation}\label{eq:equivalence_1_inf_element}
		 \norm{z}_{L^1(K)}  \geq C^* \norm{z}_{L^\infty(K)} |K|,
	\end{equation}
with $|K|$ measure of $K$ and $C^*$ a constant dependent on $C_g$ and on $\norm{z}_{L^\infty(K)}$ but independent of the size of $K$.
\end{lemma}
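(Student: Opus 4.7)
The plan is to show that under a gradient bound, $z$ cannot drop from its sup value to zero too quickly, so on a sufficiently small element $K$ the function $|z|$ stays uniformly close to $\norm{z}_{L^\infty(K)}$, and hence its integral is comparable to $\norm{z}_{L^\infty(K)}\,|K|$.

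More concretely, I would start by setting $M := \norm{z}_{L^\infty(K)}$. If $M = 0$ the statement is trivial, so assume $M > 0$. Since $z$ is continuous and $K$ is compact, $|z|$ attains its maximum at some point $\uvec{x}^* \in K$ with $|z(\uvec{x}^*)| = M$. I would then invoke the fundamental theorem of calculus along the segment joining $\uvec{x}^*$ and an arbitrary $\uvec{x}\in K$ (convex by assumption), combined with the dual pairing between the $1$-norm and the $\infty$-norm, to write
\begin{equation}
|z(\uvec{x}) - z(\uvec{x}^*)| = \left|\int_0^1 \nabla_{\uvec{x}} z\bigl(\uvec{x}^* + t(\uvec{x}-\uvec{x}^*)\bigr)\cdot (\uvec{x}-\uvec{x}^*)\,dt\right| \leq C_g\,\norm{\uvec{x}-\uvec{x}^*}_\infty \leq C_g\,\mathrm{diam}(K).
\end{equation}
Hence $|z(\uvec{x})| \geq M - C_g\,\mathrm{diam}(K)$ for every $\uvec{x} \in K$.

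Now I would impose the smallness condition on $K$: choose $\mathrm{diam}(K) \leq \tfrac{M}{2 C_g}$ (this is where the constant $C^*$ ends up depending on $C_g$ and on $\norm{z}_{L^\infty(K)}$). Under this condition,
\begin{equation}
|z(\uvec{x})| \geq \tfrac{M}{2} = \tfrac{1}{2}\norm{z}_{L^\infty(K)}, \qquad \forall \uvec{x}\in K.
\end{equation}
Integrating over $K$ yields
\begin{equation}
\norm{z}_{L^1(K)} = \int_K |z(\uvec{x})|\,d\uvec{x} \geq \tfrac{1}{2}\norm{z}_{L^\infty(K)}\,|K|,
\end{equation}
so the conclusion holds with $C^* = \tfrac{1}{2}$ (or, more loosely, any $C^*\in(0,1/2]$), and the only role of $C_g$ and $M$ is through the threshold on the size of $K$.

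The argument is essentially routine. The only mild subtlety is keeping the norm bookkeeping consistent, since the hypothesis is phrased in terms of $\norm{\norm{\nabla_{\uvec{x}} z}_1}_{L^\infty(K)}$: this is precisely the dual norm needed to control $|\nabla z\cdot(\uvec{x}-\uvec{x}^*)|$ by $\norm{\uvec{x}-\uvec{x}^*}_\infty$, which in turn bounds $\mathrm{diam}(K)$ in the $\infty$-sense. No other delicate step is expected.
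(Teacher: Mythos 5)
Your proof is correct and rests on the same mechanism as the paper's: the gradient bound $\norm{\norm{\nabla_{\uvec{x}}z}_1}_{L^\infty(K)}\leq C_g$ prevents $\abs{z}$ from dropping below half of its maximum $M=\norm{z}_{L^\infty(K)}$ within distance $M/(2C_g)$ of the maximizer $\uvec{x}^*$. The execution differs in one structural point. The paper does not shrink $K$ at the outset: it introduces the set $B=\lbrace \uvec{x}\in K : \abs{z(\uvec{x})}\geq M/2\rbrace$, shows that $B$ contains the intersection $B^*$ of $K$ with the ball of radius $\rho=M/(2C_g)$ centered at $\uvec{x}^*$, bounds $\abs{B^*}\geq\min(C_\alpha\rho^D,\abs{K})$ via an aspect-ratio constant $C_\alpha$, and only at the very end invokes the smallness of $K$ to turn $C^*=\tfrac{1}{2}\min(C_\alpha\rho^D/\abs{K},1)$ into $\tfrac{1}{2}$. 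You instead spend the smallness hypothesis immediately, requiring $\mathrm{diam}(K)\leq M/(2C_g)$ so that $\abs{z}\geq M/2$ on all of $K$, which bypasses the measure estimate and the constant $C_\alpha$ entirely and lands directly on $C^*=\tfrac{1}{2}$. Since the lemma only asserts the inequality for $K$ small enough, your shortcut is legitimate and arguably cleaner; what the paper's longer route buys is an explicit (weaker) bound valid for elements of any size, making visible how the constant degrades with $\abs{K}$ and with the element's shape. Both arguments run along the segment from $\uvec{x}^*$ to a generic point of $K$ and therefore both rely on the standing convexity assumption on the elements; your H\"older pairing of $\norm{\cdot}_1$ against $\norm{\cdot}_\infty$ is exactly the right bookkeeping for the hypothesis as stated.
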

\begin{proof}
As $K$ is closed and $z\in C^1(K)$, then 
\begin{equation}
\exists \uvec{x}^*\in K~\text{s.t.}~ \abs{z(\uvec{x}^*)}=\norm{z}_{L^\infty(K)}<+\infty.
\end{equation}
Further, due to the continuity of $z$, the set $B$ of the points in $K$ for which the absolute value of the function is larger or equal than $\frac{\abs{z(\uvec{x}^*)}}{2}$ is non-empty and has a strictly positive measure, i.e. $\abs{B}>0$ with 
\begin{equation}
B:=\left\lbrace \uvec{x}\in K ~\text{s.t.}~ \abs{z(\uvec{x})} \geq \frac{\abs{z(\uvec{x}^*)}}{2}  \right\rbrace.
\end{equation}
We try now to find a lower bound for $\abs{B}$ by defining a set $B^*\subseteq B$ whose measure is known; in particular we define  
\begin{equation}
B^*:=\left\lbrace \uvec{x}\in K ~\text{s.t.}~d(\uvec{x},\uvec{x}^*)\leq \frac{\abs{z(\uvec{x}^*)}}{2C_g} \right\rbrace.
\end{equation}
where $d(\cdot,\cdot)$ is the Euclidean distance. Indeed, we have that $B^*\subseteq B$. Let $\tilde{\uvec{x}}\in B^*$, then by a simple Taylor expansion we get
\begin{align}
\abs{z(\tilde{\uvec{x}})}=\abs{z(\uvec{x}^*)+\nabla_{\uvec{x}}z(\uvec{s}) (\uvec{s}-\uvec{x}^*)}
\end{align}
with $\uvec{s}$ being a point, dependent on $\tilde{\uvec{x}}$, contained in the segment $S(\tilde{\uvec{x}},\uvec{x}^*)$ connecting $\tilde{\uvec{x}}$ and $\uvec{x}^*$.
The triangle inequality gives 
\begin{align}
\abs{z(\tilde{\uvec{x}})}&=\abs{z(\uvec{x}^*)+\nabla_{\uvec{x}}z(\uvec{s}) (\uvec{s}-\uvec{x}^*)}\geq \abs{z(\uvec{x}^*)}-\abs{\nabla_{\uvec{x}}z(\uvec{s}) (\uvec{s}-\uvec{x}^*)}.
                \label{eq:bound_x_in_B}
\end{align}
Now, we have that $\abs{\nabla_{\uvec{x}}z(\uvec{s}) (\uvec{s}-\uvec{x}^*)}\leq \frac{\abs{z(\uvec{x}^*)}}{2}$ because of the regularity assumption on the gradient of $z$ and because $d(\uvec{s},\uvec{x}^*)\leq d(\tilde{\uvec{x}},\uvec{x}^*)$ as $\uvec{s}$ belongs to the segment $S(\tilde{\uvec{x}},\uvec{x}^*)$. This can be seen by simple computations:
\begin{align}
\abs{\nabla_{\uvec{x}}z(\uvec{s}) (\uvec{s}-\uvec{x}^*)}&\leq \norm{ \norm{\nabla_{\uvec{x}}z}_1}_{L^\infty(K)}d(\uvec{s},\uvec{x}^*)\leq C_g\frac{\abs{z(\uvec{x}^*)}}{2C_g}=\frac{\abs{z(\uvec{x}^*)}}{2}.
\end{align}
Coming back to \eqref{eq:bound_x_in_B} with this information, we can write
\begin{align}
\abs{z(\tilde{\uvec{x}})}&\geq \abs{z(\uvec{x}^*)}-\abs{\nabla_{\uvec{x}}z(\uvec{s}) (\uvec{s}-\uvec{x}^*)}\geq\abs{z(\uvec{x}^*)}-\frac{\abs{z(\uvec{x}^*)}}{2}=\frac{\abs{z(\uvec{x}^*)}}{2}
\end{align} 
and hence $\tilde{\uvec{x}}\in B$ and $B^*\subseteq B$. 

We are able to estimate the measure of $B^*$ providing therefore a lower bound for $\abs{B}$, indeed, by definition, such set is the intersection between $K$ and the ball $B_{\rho}(\uvec{x}^*)$ centered in $\uvec{x}^*$ with radius $\rho:=\frac{\abs{z(\uvec{x}^*)}}{2C_g}$. If the ball $B_{\rho}(\uvec{x}^*)$ is entirely contained in $K$ then $B^*=B_{\rho}(\uvec{x}^*)$ and its measure is given by $\abs{B^*}=\abs{B_{\rho}(\uvec{x}^*)}=C_s\rho^D$ where $C_s$ is the measure of the unitary ball in $\mathbb{R}^D.$ If this does not hold, it is anyway always possible to find a lower bound for the measure of $B^*$ of the type 
\begin{equation}
\abs{B^*}\geq \min{(C_\alpha \rho^D,\abs{K})}
\end{equation}
with $C_\alpha$ constant dependent only on the aspect ratio of $K$ but not on its size.
Therefore, from the definition of $B$ and from $\abs{B}\geq\abs{B^*}\geq \min{(C_\alpha \rho^D,\abs{K})}$, we get
\begin{align}
\norm{z}_{L^1(K)}=\int_K \abs{z(\uvec{x})}d \uvec{x} \geq \int_{B} \abs{z(\uvec{x})}d \uvec{x} \geq \frac{\abs{z(\uvec{x}^*)}}{2} \abs{B}\geq \frac{\abs{z(\uvec{x}^*)}}{2} \abs{B^*}  \geq \frac{\abs{z(\uvec{x}^*)}}{2} \min{(C_\alpha \rho^D,\abs{K})}. 
\end{align}
Now, recalling that $\abs{z(\uvec{x}^*)}=\norm{z}_{L^\infty(K)}$, we have 
\begin{align}
\norm{z}_{L^1(K)}\geq  \frac{\norm{z}_{L^\infty(K)}}{2} \abs{K}\min{\left(\frac{C_\alpha \rho^D}{\abs{K}},1 \right)} 
\end{align}
We define thus $C^*:=\frac{1}{2}\min{\left(\frac{C_\alpha \rho^D}{\abs{K}},1 \right)}$ and we observe that, since $C_\alpha$ only depends on geometrical properties of $K$ and $\rho$ only depends on $z$, for $K$ small enough  $C^*=\frac{1}{2}$ and we get the thesis.

\end{proof}

Now, let us generalize this result to the whole domain for piecewise $C^1$ functions, even discontinuous, by proving the last result of this section.

\begin{proposition}[Relation between $L^\infty$ and $L^1$ norms]\label{lem:equivalence_norm1_norminf}
	Let $z \in \left\{ z \in L^1(\Omega) ~\text{s.t.}~ z|_K \in C^1(K), \, \forall K \in \mathcal{T}_h \right\}$ satisfying locally in each element the hypotheses of the previous lemma, i.e. $\norm{ \norm{\nabla_{\uvec{x}}z}_1}_{L^\infty(K)}\leq C_g$ and $K$ small enough. 
	Assume the mesh to be regular in the sense that for any $i=1,\dots,I$ it holds that
	\begin{equation}\label{eq:mesh_regularity}
		\int_{K} \abs{\varphi_i(\x)} d\x \leq C_\mathcal{M} \int_{\tilde{K}} \abs{\varphi_i(\x)} d\x,\qquad \forall K,\tilde{K} \in K_i,
	\end{equation} 
where $\left\lbrace \varphi_i \right\rbrace_{i=1,\dots,I}$ is the basis of $V_h$ given by Bernstein polynomials.
Then,
	\begin{equation}\label{eq:equivalence_norm1_norminf}
		\sum_{i=1}^I \norm{z}_{L^\infty(\mathcal{K}_i)} \sum_{K \in K_i} \int_{K} \abs{\varphi_i(\x)} d\x \leq \tilde{C}^* \norm{z}_{L^1(\Omega)},
	\end{equation}
	where $\tilde{C}^*$ is a positive constant independent of the mesh parameter.
\end{proposition}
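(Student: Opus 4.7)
The strategy is to localize the $L^\infty$ norm on the patch $\mathcal{K}_i$ to a single element, apply \cref{lem:equivalence_1_inf_element} on that element, and then collapse the sum over $i$ via a finite-overlap argument.

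First, I would localize. For each index $i$, since $\mathcal{K}_i=\bigcup_{K\in K_i}K$ is the finite union of closed elements on each of which $z$ is continuous, the supremum $\norm{z}_{L^\infty(\mathcal{K}_i)}$ is attained on some distinguished element $K_i^\star\in K_i$, so that $\norm{z}_{L^\infty(\mathcal{K}_i)}=\norm{z}_{L^\infty(K_i^\star)}$. This reduction is essential because $z$ may be discontinuous across interfaces, and the previous lemma requires $C^1$ regularity on the whole set to which it is applied.

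Second, I would perform the element-wise estimate. Using the mesh-regularity assumption \eqref{eq:mesh_regularity}, I would bound
\begin{equation}
\sum_{K\in K_i}\int_K\abs{\varphi_i(\x)}d\x \leq N_{\max}\,C_\mathcal{M}\int_{K_i^\star}\abs{\varphi_i(\x)}d\x,
\end{equation}
where $N_{\max}$ is a uniform bound on the cardinality of $K_i$ (a consequence of shape regularity). Since the Bernstein basis functions are uniformly bounded by a constant $C_0$ depending only on $D$ and $M$, one further has $\int_{K_i^\star}\abs{\varphi_i}d\x\leq C_0\abs{K_i^\star}$. Applying \cref{lem:equivalence_1_inf_element} on $K_i^\star$ (whose hypotheses are met for $h$ small enough), I obtain $\norm{z}_{L^\infty(K_i^\star)}\abs{K_i^\star}\leq (1/C^*)\norm{z}_{L^1(K_i^\star)}$. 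Chaining these bounds, each summand in \eqref{eq:equivalence_norm1_norminf} is controlled by a constant times $\norm{z}_{L^1(K_i^\star)}$.

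Finally, I would exploit finite overlap to sum over $i$. Each element $K\in\mathcal{T}_h$ contains a fixed number $R$ of DoFs, so $K$ can play the role of $K_i^\star$ for at most $R$ distinct indices $i$. Therefore
\begin{equation}
\sum_{i=1}^I\norm{z}_{L^1(K_i^\star)}\leq R\,\norm{z}_{L^1(\Omega)},
\end{equation}
and collecting all the constants into $\tilde{C}^*=R\,N_{\max}\,C_\mathcal{M}\,C_0/C^*$ yields the thesis. The main obstacle is precisely the piecewise regularity of $z$, which forces the localization to a single element before the previous lemma can be invoked; the rest of the argument is careful bookkeeping to ensure that $C_0$, $C_\mathcal{M}$, $N_{\max}$, $R$ and $C^*$ all depend only on the reference element, the polynomial degree and the mesh regularity, and not on the mesh parameter $h$.
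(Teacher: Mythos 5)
Your proposal is correct and follows essentially the same route as the paper's proof: localize $\norm{z}_{L^\infty(\mathcal{K}_i)}$ to a distinguished element $K^i$, use the mesh-regularity assumption and the uniform bound $C_0$ on the basis functions to reduce each summand to $\norm{z}_{L^\infty(K^i)}\abs{K^i}$, invoke \cref{lem:equivalence_1_inf_element}, and conclude by a finite-overlap count. The only (immaterial) difference is in the bookkeeping of that count: the paper switches the order of the double sum and bounds the multiplicity by $R(M_n+1)$, whereas you bound the patch cardinality by $N_{\max}$ upfront and observe that each element serves as $K_i^\star$ for at most $R$ indices.
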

\begin{proof}
Let $K^i \in K_i$ be the element such that $\norm{z}_{L^\infty(\mathcal{K}_i)} = \norm{z}_{L^\infty(K^i)} $; then, using the mesh regularity assumption \eqref{eq:mesh_regularity} and the fact that the basis functions are bounded in absolute value by a constant $C_0$ independent of the mesh parameter, we have 
\begin{align}
\begin{split}
 \sum_{i=1}^I  \norm{z}_{L^\infty(\mathcal{K}_i)} \sum_{K \in K_i} \int_{K} \abs{\varphi_i(\x)} d\x &=\sum_{i=1}^I \sum_{K \in K_i}  \norm{z}_{L^\infty(\mathcal{K}_i)} \int_{K} \abs{\varphi_i(\x)} d\x\\
 &= \sum_{i=1}^I \sum_{K \in K_i}  \norm{z}_{L^\infty(K^i)} \int_{K} \abs{\varphi_i(\x)} d\x \\
	&\leq\sum_{i=1}^I \sum_{K \in K_i}  C_\mathcal{M}\norm{z}_{L^\infty(K^i)} \int_{K^i} \abs{\varphi_i(\x)} d\x\\
	&\leq\sum_{i=1}^I \sum_{K \in K_i}  C_\mathcal{M} C_0 \norm{z}_{L^\infty(K^i)} \abs{K^i}. 
\end{split}
\end{align} 
We apply now the previous \cref{lem:equivalence_1_inf_element} and, switching the sums over the elements and the DoFs, we get
\begin{align}
\begin{split}
\sum_{i=1}^I \sum_{K \in K_i}  C_\mathcal{M} C_0 \norm{z}_{L^\infty(K^i)} \abs{K^i} &\leq \sum_{i=1}^I \sum_{K \in K_i}  \frac{C_\mathcal{M} C_0}{C^*}  \norm{z}_{L^1(K^i)} \\
&=  \frac{C_\mathcal{M} C_0}{C^*} \sum_{K \in \tess} \sum_{\uvec{x}_i\in K}     \norm{z}_{L^1(K^i)}
\end{split}
\end{align}
where $C^*$ is the minimal coefficient of \cref{lem:equivalence_1_inf_element} among the ones associated to all the elements $K^i$.
If $R$ is the maximal number of DoFs in a single element in the whole mesh, we can continue and write
\begin{align}
\frac{C_\mathcal{M} C_0}{C^*} \sum_{K \in \tess} \sum_{\uvec{x}_i\in K}   \norm{z}_{L^1(K^i)} \leq \frac{R C_\mathcal{M} C_0}{C^*} \sum_{K \in \tess} \sup_{\uvec{x}_i\in K}   \norm{z}_{L^1(K^i)}.
\label{eq:counting}
\end{align}
Now, in \eqref{eq:counting}, each element $K$ in the tessellation is contributing to the sum with the $L^1$ norm of $z$ over one element $K^i$ among the ones associated to the DoFs $\uvec{x}_i \in K.$
The generic element $K^i$ can be present in the sum at most a number of times equal to $M_{n}+1$ where $M_{n}$ represents the maximal number of neighbors that an element can have in the tessellation. Hence, we get
\begin{align}
\frac{R C_\mathcal{M} C_0}{C^*} \sum_{K \in \tess} \sup_{\uvec{x}_i\in K}   \norm{z}_{L^1(K^i)}\leq \frac{R C_\mathcal{M} C_0}{C^*} (M_n+1) \norm{z}_{L^1(\Omega)}.
\end{align}
Observe that none of the coefficients $R$, $C_\mathcal{M}$, $C_0$, $C^*$ or $M_n$ depend on the mesh parameter, therefore, by setting $\tilde{C}^*=\frac{R C_\mathcal{M} C_0}{C^*} (M_n+1)$, we get the thesis.
\end{proof}
Also in this case, we remark that, in the context of Bernstein polynomials, which are non-negative, the absolute value on $\varphi_i$ is not necessary. We kept it just to be more general. Indeed, all the results can be generalized to other basis functions like the Lagrange polynomials.
 
\subsubsection{Definition of $\lopdt^2$}
The operator $\lopdt^2$ is the high order implicit operator that we would like to solve. Its definition is not very different from the one seen in the context of the bDeC for ODEs. 
We introduce the $M+1$ subtimenodes $t^m$ with $m=0,\dots,M$ in the interval $[t_n,t_n+\Delta t]$ in which we will consider the approximations of the values of the solution to our system of ODEs.
We refer to $\uvec{c}(t^m)$ as the exact solution in the node $t^m$ and to $\uvec{c}^m$ as the approximation of the solution in the same node. Clearly, in this case $\uvec{c}(t^m)$ and $\uvec{c}^m$ contain as components all the coefficients corresponding to the spatial DoFs, i.e., respectively the vectors $\uvec{c}_i(t^m)$ of the exact coefficients in the DoFs at the time $t^m$ and the vectors $\uvec{c}_i^m$ of the approximated ones. As usual, for the first subtimenode we set $\uvec{c}^0=\uvec{c}(t^0)=\uvec{c}(t_n)=\uvec{c}_n$ without any approximation. Starting from the exact integration of \eqref{systosolve} over $[t^0,t^m]$ and substituting $\spacestuff(\uvec{c}(t))$ with its $M$-th order interpolation in time associated to the $M+1$ subtimenodes, we get 
\begin{equation}
\sum_{K\in K_i}\sum_{\uvec {x}_j \in K} \left(\int_K \varphi_i(\uvec{x}) \varphi_j(\uvec{x}) d \uvec{x}\right)\left( \uvec{c}_j^m-\uvec{c}_j^0\right)+\Delta t \sum_{\ell=0}^{M} \theta^m_\ell\spacestuff(\uvec{c}^\ell)=\uvec{0}, ~ \forall i=1,\dots,I ~ \forall m=1,\dots,M.
\label{trueDeC1}
\end{equation}

Therefore, we can define the operator $\lopdt^2:\mathbb{R}^{(I \times Q \times M)}\rightarrow \mathbb{R}^{(I \times Q \times M)}$ as  

\begin{equation}
\lopdt^2(\cund)=\left(\mathcal{L}^2_{\Delta,1}(\cund),\mathcal{L}^2_{\Delta,2}(\cund),\dots,\mathcal{L}^2_{\Delta,I}(\cund)\right), \qquad \forall \cund \in \mathbb{R}^{(I \times Q \times M)},
\label{l2DeC}
\end{equation}
where for any $i$ we have 
\begin{equation}
\lopdi^2(\cund)=\begin{pmatrix}
                \sum_{K\in K_i}\sum_{\uvec {x}_j \in K} \left(\int_K \varphi_i(\uvec{x}) \varphi_j(\uvec{x}) d \uvec{x}\right)\left( \uvec{c}_j^1-\uvec{c}_j^0\right)+\Delta t \sum_{\ell=0}^{M} \theta^1_\ell\spacestuff(\uvec{c}^\ell)\\
        \vdots\\
        \sum_{K\in K_i}\sum_{\uvec {x}_j \in K} \left(\int_K \varphi_i(\uvec{x}) \varphi_j(\uvec{x}) d \uvec{x}\right)\left( \uvec{c}_j^m-\uvec{c}_j^0\right)+\Delta t \sum_{\ell=0}^{M} \theta^m_\ell\spacestuff(\uvec{c}^\ell)\\
        \vdots\\
                \sum_{K\in K_i}\sum_{\uvec {x}_j \in K} \left(\int_K \varphi_i(\uvec{x}) \varphi_j(\uvec{x}) d \uvec{x}\right)\left( \uvec{c}_j^M-\uvec{c}_j^0\right)+\Delta t \sum_{\ell=0}^{M} \theta^M_\ell\spacestuff(\uvec{c}^\ell)
\end{pmatrix}.
\label{l2iDeC}
\end{equation}
with the general argument $\cund\in \mathbb{R}^{(I \times Q \times M)}$ having $M$ components $\uvec{c}^m\in \mathbb{R}^{(I \times Q)}$ each one associated to a subtimenode and having $I$ components $\uvec{c}_i^m$ each one associated to a DoF.

The solution $\cund_\Delta$ to $\lopdt^2(\cund_\Delta)=\uvec{0}$ is $(M+1)$-th order accurate in the sense that would contain as components $(M+1)$-th order accurate approximations of the coefficients which represent the exact solution to \eqref{systosolve} in all the subtimenodes $t^m$ $m=1,\dots,M$.  Unfortunately, the problem $\lopd^2(\cund)=\uvec{0}$ is a huge nonlinear system.

\subsubsection{Definition of $\lopdt^1$}
Performing an Euler approximation in time to numerically solve \eqref{systosolve} in $[t^0,t^m]$ we get
\begin{equation}
\sum_{K\in K_i}\sum_{\uvec {x}_j \in K} \left(\int_K \varphi_i(\uvec{x}) \varphi_j(\uvec{x}) d \uvec{x}\right)\left( \uvec{c}_j^m-\uvec{c}_j^0\right)+
\Delta t\beta^{m}\spacestuff(\uvec{c}^0)=\uvec{0}, \quad\forall i=1,\dots,I, \quad \forall m=1,\dots,M.
\label{EulerDeC0}
\end{equation}
Further, we perform a first order mass lumping in space to get a fully explicit approximation formula for $\uvec{c}_i^m$ 
\begin{equation}
C_i \left( \uvec{c}_i^m-\uvec{c}_i^0\right)+
\Delta t\beta^{m}\spacestuff(\uvec{c}^0)=\uvec{0}, \quad \forall i=1,\dots,I \quad \forall m=1,\dots,M
\label{EulerDeC}
\end{equation}
where $C_i$ are constant quantities defined as  
\begin{equation}
C_i:=\int_\Omega \varphi_i(\uvec{x})d\uvec{x}=\sum_{K\in K_i} \int_{K} \varphi_i(\uvec{x})d\uvec{x}, \quad \forall i=1,\dots,I.
\label{dualcell}
\end{equation}
We assume a choice of the basis functions such that $C_i\neq 0$ $\forall i$ so that \eqref{EulerDeC} is well-posed. For example, if we choose the Bernstein polynomials, we have $C_i> 0$ $\forall i$ as the basis functions $\varphi_i$ are nonnegative.
Indeed, $\uvec{c}_i^m$ got from \eqref{EulerDeC} is a first order approximation of the exact coefficient $\uvec{c}_i(t^m)$, as proved in the next proposition.

\begin{proposition}[First order accuracy of \eqref{EulerDeC}]
The solution to \eqref{EulerDeC} is first order accurate with respect to the exact solution $\uvec{c}(t)$ to \eqref{systosolve} 
evaluated in all the subtimenodes $t^m$ for $m=1,\dots,M$.
\end{proposition}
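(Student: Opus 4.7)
The plan is to mimic the local-consistency argument used for the Euler operator in the ODE setting (the proposition following \eqref{Euler}), with one extra ingredient handling the perturbation introduced by the spatial mass lumping: the preliminary \cref{finalresult} (in its vectorial form \eqref{finalresultvectorial}). I would start from a first-order Taylor expansion at $t^0$ of the exact semi-discrete solution $\uvec{c}(t)$ of \eqref{systosolve}, which together with $\uvec{c}^0=\uvec{c}(t^0)$ gives
\begin{equation*}
\uvec{c}_i(t^m)=\uvec{c}_i^0+\Delta t\,\beta^m\,\uvec{c}_i'(t^0)+O(\Delta t^2),\qquad m=1,\dots,M,
\end{equation*}
and compare it componentwise with the explicit update \eqref{EulerDeC}, $\uvec{c}_i^m=\uvec{c}_i^0-\Delta t\,\beta^m\,\spacestuff(\uvec{c}^0)/C_i$. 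Subtracting, the quantity I need to bound becomes
\begin{equation*}
\uvec{c}_i(t^m)-\uvec{c}_i^m=\Delta t\,\beta^m\!\left[\uvec{c}_i'(t^0)+\tfrac{1}{C_i}\spacestuff(\uvec{c}^0)\right]+O(\Delta t^2).
\end{equation*}

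The next step is to show that the bracketed term is $O(h)$. Using \eqref{systosolve} at $t=t^0$ I would substitute $\spacestuff(\uvec{c}^0)=-\sum_{K\in K_i}\sum_{\uvec{x}_j\in K}\bigl(\int_K\varphi_i\varphi_j\,d\uvec{x}\bigr)\uvec{c}_j'(t^0)$, and combined with $C_i=\sum_{K\in K_i}\int_K\varphi_i\,d\uvec{x}$ the quantity $C_i\uvec{c}_i'(t^0)+\spacestuff(\uvec{c}^0)$ is recognized as exactly the argument in the vectorial mass-lumping estimate \eqref{finalresultvectorial} applied to the $V_h^Q$ function $\partial_t\uapp(\cdot,t^0)=\sum_{j=1}^I \uvec{c}_j'(t^0)\varphi_j$. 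This yields
\begin{equation*}
\norm{C_i\uvec{c}_i'(t^0)+\spacestuff(\uvec{c}^0)}_\infty\leq \hat{C}\,h\,\norm{\norm{\nabla_{\uvec{x}}\partial_t\uapp(\cdot,t^0)}_1}_{L^\infty(\mathcal{K}_i)}\int_{\mathcal{K}_i}\abs{\varphi_i(\uvec{x})}\,d\uvec{x},
\end{equation*}
after which dividing by $C_i$, which for the Bernstein basis coincides with $\int_{\mathcal{K}_i}\varphi_i\,d\uvec{x}=\int_{\mathcal{K}_i}\abs{\varphi_i}\,d\uvec{x}$, cancels the integral factor and leaves a clean $O(h)$ bound.

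Plugging the resulting bound back into the displacement, the first contribution becomes $O(\Delta t\,h)$, which under the assumed CFL $\Delta t\leq C h$ is itself $O(\Delta t^2)$ and gets merged with the Taylor remainder, giving the claimed first-order local accuracy $\uvec{c}_i(t^m)-\uvec{c}_i^m=O(\Delta t^2)$ uniformly in $i$. The main obstacle I anticipate is the correct identification of the input to \cref{finalresult}: not $\uapp(\cdot,t^0)$ itself but its time derivative $\partial_t\uapp(\cdot,t^0)$, whose Bernstein coefficients are exactly $\uvec{c}_j'(t^0)$. Once this is spotted, the uniform-in-$h$ bound on $\norm{\nabla_{\uvec{x}}\partial_t\uapp}$ follows from the semi-discrete equation together with the Lipschitz regularity of the flux and source (i.e., of $\uvec{G}$ in the ODE abstraction), and the CFL condition is what ultimately converts a spatial consistency error into the temporal one of the desired order.
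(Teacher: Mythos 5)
Your argument is correct in substance, but it reorganizes the proof differently from the paper. The paper works at the level of the residual: it inserts the exact $\uvec{c}(t^m)$ into the lumped scheme \eqref{EulerDeC}, splits the result into the residual of the non-lumped Euler step \eqref{EulerDeC0} (asserted to be $O(\Delta^{D+2})$, see \eqref{intermediateaccuracy}) plus the pure lumping error, and then applies \cref{finalresult} to the \emph{increment} $\uvec{u}_h(\cdot,t^m)-\uvec{u}_h(\cdot,t^0)$, whose gradient supplies the extra factor $O(\Delta t)$. You instead work at the level of the nodal error, Taylor-expand $\uvec{c}_i(t^m)$, use the semi-discrete equation \eqref{systosolve} at $t^0$ to rewrite $\spacestuff(\uvec{c}^0)$ as minus the mass matrix applied to $\uvec{c}'(t^0)$, and apply \eqref{finalresultvectorial} to the \emph{time derivative} $\partial_t\uvec{u}_h(\cdot,t^0)$, with the factor $\Delta t$ coming from the explicit prefactor $\Delta t\,\beta^m$; the division by $C_i$ (legitimate for Bernstein bases, where $C_i=\int_{\mathcal{K}_i}\abs{\varphi_i}>0$ cancels the integral factor in the lemma exactly) converts the paper's residual estimate $O(\Delta^{D+2})$ into your nodal estimate $O(\Delta^{2})$, so the two formalizations of ``first order accurate'' are consistent. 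Your route has the advantage of not relying on the unproven intermediate claim \eqref{intermediateaccuracy}, at the price of needing the uniform bound on $\norm{\nabla_{\uvec{x}}\partial_t\uvec{u}_h}$, which is essentially the same regularity the paper uses implicitly. One small slip: from $\Delta t\leq Ch$ you get $\Delta t\,h\leq Ch^2$, so the mass-lumping contribution is $O(h^2)=O(\Delta^2)$ (with $\Delta=h$ the relevant parameter here), not $O(\Delta t^2)$ — the CFL inequality goes the other way from how you invoked it; since the Taylor remainder $O(\Delta t^2)$ is also $O(h^2)$ under the CFL, the correct conclusion $\uvec{c}_i(t^m)-\uvec{c}_i^m=O(\Delta^2)$ still follows immediately.
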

\begin{proof}
We can equivalently show that if we insert the exact solution to \eqref{systosolve} evaluated in all the subtimenodes $t^m$ $m=1,\dots,M$ into the left-hand side of \eqref{EulerDeC} we get an error $O(\Delta^{D+2})$ where $D$ is the number of spatial dimensions and the parameter $\Delta$ is the mesh parameter $h$ of the space discretization. 
Therefore, we want to prove that
\begin{equation}
C_i \left( \uvec{c}_i(t^m)-\uvec{c}_i^0\right)+
\Delta t\beta^{m}\spacestuff(\uvec{c}^0)=O(\Delta^{D+2}), \quad \forall i=1,\dots,I, \quad \forall m=1,\dots,M.
\label{firstorderaccuracy}
\end{equation}

We know that by plugging the exact solution $\uvec{c}(t)$ in \eqref{EulerDeC0} we get an error $O(\Delta^{D+2})$:
\begin{equation}
\sum_{K\in K_i}\sum_{\uvec {x}_j \in K} \left(\int_K \varphi_i(\uvec{x}) \varphi_j(\uvec{x}) d \uvec{x}\right)\left( \uvec{c}_j(t^m)-\uvec{c}_j^0\right)+
\Delta t\beta^{m}\spacestuff(\uvec{c}^0)=O(\Delta^{D+2}). 
\label{intermediateaccuracy}
\end{equation}
Hence, instead of \eqref{firstorderaccuracy}, we can show that the difference of \eqref{firstorderaccuracy} and \eqref{intermediateaccuracy} is an $O(\Delta ^{D+2})$, i.e.,
\begin{align}
C_i \left( \uvec{c}_i(t^m)-\uvec{c}_i^0\right)-\sum_{K\in K_i}\sum_{\uvec {x}_j \in K} \left(\int_K \varphi_i(\uvec{x}) \varphi_j(\uvec{x}) d \uvec{x}\right)\left( \uvec{c}_j(t^m)-\uvec{c}_j^0\right) = O(\Delta^{D+2}).
\label{termtoshowthatisfirstorderaccurate}
\end{align}
By definition of the coefficients $C_i$ in \eqref{dualcell} and the preliminary result \eqref{finalresultvectorial}, we can write 
\begin{align}
\begin{split}
& \norm{C_i \left( \uvec{c}_i(t^m)-\uvec{c}_i^0\right)-\sum_{K\in K_i}\sum_{\uvec {x}_j \in K} \left(\int_K \varphi_i(\uvec{x}) \varphi_j(\uvec{x}) d \uvec{x}\right)\left( \uvec{c}_j(t^m)-\uvec{c}_j^0\right)}_\infty \\
\leq& \hat{C}h\norm{\norm{ \norm{\nabla_{\uvec{x}}(\uvec{u}_{h}(\uvec{x},t^m)-\uvec{u}_{h}(\uvec{x},t^0))}_1 }_{L^\infty(\mathcal{K}_i)}}_\infty  \int_{\mathcal{K}_i} \left\vert \varphi_i(\uvec{x}) \right\vert d\uvec{x},
\end{split}
\label{idontneed0}
\end{align}
where we remark that the internal norms $\norm{\cdot}_1$ and $\norm{\cdot}_{L^\infty(\mathcal{K}_i)}$ are applied componentwise while the external one, $\norm{\cdot}_{\infty}$, is on $\mathbb{R}^Q$.
From a Taylor expansion it is easy to see that
\begin{align}
\nabla_{\uvec{x}}(\uvec{u}_{h}(\uvec{x},t^m)-\uvec{u}_{h}(\uvec{x},t^0))=O(\Delta t).
\end{align}
Moreover, $\int_{\mathcal{K}_i} |\varphi_i(\uvec{x})| d \uvec{x} = O(\Delta^D)$, hence, we have 
\begin{align}
& \norm{C_i \left( \uvec{c}_i(t^m)-\uvec{c}_i^0\right)-\sum_{K\in K_i}\sum_{\uvec {x}_j \in K} \left(\int_K \varphi_i(\uvec{x}) \varphi_j(\uvec{x}) d \uvec{x}\right)\left( \uvec{c}_j(t^m)-\uvec{c}_j^0\right)}_\infty = O(\Delta ^{D+2}).
\end{align}
\end{proof}

Directly from \eqref{EulerDeC}, we can define the explicit low order operator $\lopdt^1:\mathbb{R}^{(I \times Q \times M)}\rightarrow \mathbb{R}^{(I \times Q \times M)}$ as 

\begin{equation}
\lopdt^1(\cund)=\left(\mathcal{L}^1_{\Delta,1}(\cund),\mathcal{L}^1_{\Delta,2}(\cund),\dots,\mathcal{L}^1_{\Delta,I}(\cund)\right), \quad \forall \cund \in \mathbb{R}^{(I \times Q \times M)},
\label{l1DeC}
\end{equation}
where for any $i$ we have
\begin{equation}
\lopdi^1(\cund)=\begin{pmatrix}
                C_i\left( \uvec{c}_i^1-\uvec{c}_i^0\right)+\Delta t \beta^1\spacestuff(\uvec{c}^0)\\
        \vdots\\
        C_i\left( \uvec{c}_i^m-\uvec{c}_i^0\right)+\Delta t \beta^m\spacestuff(\uvec{c}^0)\\
        \vdots\\
                C_i\left( \uvec{c}_i^M-\uvec{c}_i^0\right)+\Delta t \beta^M\spacestuff(\uvec{c}^0)\\
\end{pmatrix}.
\label{l1iDeC}
\end{equation}
in which the convention on the indices of the components of the general argument $\cund\in \mathbb{R}^{(I \times Q \times M)}$ is the same that we had for the operator $\lopdt^2$.

\subsubsection{Proof of the properties of $\lopdt^1$ and $\lopdt^2$}
The operators $\lopdt^1$ and $\lopdt^2$ act from
$X$ to $Y$ with $X=Y=\mathbb{R}^{(I \times Q \times M)}$.
Let us recall again the hypotheses that are needed in order to apply the Deferred Correction method
\begin{itemize}
\item[i)] \textbf{Existence of a solution to $\lopdt^2$} \\
$\exists ! \usoldt \in \mathbb{R}^{(I \times Q \times M)}$ solution of $\lopdt^2$, i.e. such that $\lopdt^2(\usoldt)=\uvec{0}$;
\item[ii)] \textbf{Coercivity-like property of $\lopdt^1$} \\
$\exists \alpha_1 \geq 0$ independent of $\Delta $ s.t. 
\begin{equation}
\norm{\lopdt^1(\underline{\uvec{v}})-\lopdt^1(\underline{\uvec{w}})}_Y \geq \alpha_1\norm{\underline{\uvec{v}}-\underline{\uvec{w}}}_X, \quad \forall \underline{\uvec{v}},\underline{\uvec{w}} \in \mathbb{R}^{(I \times Q \times M)};
\end{equation}
\item[iii)] \textbf{Lipschitz-continuity-like condition of $\lopdt^1-\lopdt^2$} \\
$\exists \alpha_2 \geq 0$ independent of $\Delta $ s.t. 
\begin{equation}
\norm{\left[\lopdt^1(\underline{\uvec{v}})-\lopdt^2(\underline{\uvec{v}})\right]-\left[\lopdt^1(\underline{\uvec{w}})-\lopdt^2(\underline{\uvec{w}})\right]}_Y \leq \alpha_2 \Delta  \norm{\underline{\uvec{v}}-\underline{\uvec{w}}}_X,\quad \forall \underline{\uvec{v}},\underline{\uvec{w}}\in \mathbb{R}^{(I \times Q \times M)}.
\end{equation}
\end{itemize}

We remark that in this context the parameter $\Delta$ is the mesh parameter $h$ and that we assume the temporal step size $\Delta t \leq Ch$ for some fixed constant $C$.

We will not prove the first property, i.e., the existence of a unique solution to $\lopdt^2$, because the proof is identical to the one we had in the ODE case up to the inversion of the mass matrix: from $\lopdt^2$ we can define an operator $\mathcal{J}: \mathbb{R}^{(I \times Q \times M)} \rightarrow \mathbb{R}^{(I \times Q \times M)}$ whose fixed points (if any) are solutions to $\lopdt^2$; further, we can show that for $\Delta$ small enough the operator is a contraction over the space $\mathbb{R}^{(I \times Q \times M)}$ equipped with the infinity norm and, hence, there exists a unique fixed point of $\mathcal{J}$ which is the unique solution to $\lopdt^2$. 

Before going to the proofs of the other two properties, we need to define the norms adopted on the spaces $X$ and $Y$. 
Despite having $X=Y=\mathbb{R}^{(I \times Q \times M)}$ we will equip $X$ and $Y$ with two different norms, differently from what we have done in the ODE case.
We will specify the norms after the following useful observation.
\begin{observation}[Remark on the indices]\label{obs:indices}
The main complication of the proofs is that we have to deal with many indices.
We remind that
\begin{itemize}
\item[•] $i=1,\dots,I$ is referred to the DoFs;
\item[•] $q=1,\dots,Q$ is referred to the components of the approximated solution $\uapp$ to the system of balance laws \eqref{sys};
\item[•] $m=1,\dots,M$ is referred to the subtimenodes $t^m$, even if we remark that we also have an initial subtimenode $t^0=t_n$ in which the quantities are not unknown. 
\end{itemize}
We are already used to the fact that the general element $\underline{\uvec{c}}\in\mathbb{R}^{(I \times Q \times M)}$ must be thought as a collection of $M$ components $\uvec{c}^m\in \mathbb{R}^{(I \times Q)}$ $m=1,\dots,M$. Each component $\uvec{c}^m$ can be thought as the vector of the coefficients of a vectorial continuous piecewise polynomial function $\uvec{u}_h(\uvec{x},t)=\sum_{i=1}^I\uvec{c}_i(t)\varphi_i (\uvec{x})$ evaluated in the subtimenode $t^m$.
In fact, each $\uvec{c}^m$ is made by $I$ components $\uvec{c}^m_i\in \mathbb{R}^{Q}$ with  $i=1,\dots,I$ associated to the DoFs.
Finally, each $\uvec{c}^m_i$ is made by $Q$ components $c^{q,m}_i$ $q=1,\dots,Q$, scalar coefficients associated to the components of the solution to the system of PDEs that we would like to solve, i.e.,
\begin{equation}
\underline{\uvec{c}}=\left(
   \begin{array}{ccc}
   \uvec{c}^1\\
   \vdots\\
   \uvec{c}^m\\
   \vdots\\
   \uvec{c}^M
   \end{array}
\right)\in\mathbb{R}^{(I \times Q \times M)}, \quad
\uvec{c}^m=\left(
   \begin{array}{ccc}
   \uvec{c}^m_1\\
   \vdots\\
   \uvec{c}^m_i\\
   \vdots\\
   \uvec{c}^m_I
   \end{array}
\right)\in\mathbb{R}^{(I \times Q)}, \quad 
\uvec{c}^m_i=\left(
   \begin{array}{ccc}
   c^{1,m}_i\\
   \vdots\\
   c^{q,m}_i\\
   \vdots\\
   c^{Q,m}_i
   \end{array}
\right)\in\mathbb{R}^Q.
\label{eq:c_coefficients_sketch}
\end{equation}

\end{observation}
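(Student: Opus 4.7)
The final statement is not a theorem but an explanatory remark cataloguing the multi-index structure of the vector space $\mathbb{R}^{(I\times Q\times M)}$ on which $\lopdt^1$ and $\lopdt^2$ act in the CG setting. There is no proposition to establish in the classical sense; what needs to be done is to verify that the nested decomposition displayed in \eqref{eq:c_coefficients_sketch} is internally consistent and matches the way the two operators $\lopdt^1$ and $\lopdt^2$ were constructed earlier in the section. My plan is therefore to trace the indices backwards from the component-level definitions of $\lopdi^1$ and $\lopdi^2$ and check that the counts agree.

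First, I would start from the outermost level: the dimension of the total space should equal $I\cdot Q\cdot M$. Looking at \eqref{l2iDeC} and \eqref{l1iDeC}, each $\lopdi^2(\cund)$ and $\lopdi^1(\cund)$ is a column of $M$ blocks, one per subtimenode $t^m$ with $m=1,\dots,M$, which accounts for the outermost index. I would then observe that assembling over $i=1,\dots,I$ produces the global operators $\lopdt^1$ and $\lopdt^2$ in \eqref{l2DeC}--\eqref{l1DeC}, recovering the second level of decomposition into $I$ blocks. Finally, since the scheme applies the same template to a $Q$-component PDE system, each block $\uvec{c}_i^m$ carries $Q$ scalar entries $c_i^{q,m}$, giving the innermost level. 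Putting these three layers together reproduces exactly \eqref{eq:c_coefficients_sketch}.

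The only subtle point, and the one worth flagging explicitly, is the treatment of $m=0$: the index $m$ runs over $\{1,\dots,M\}$ in the unknown vector $\cund$, because $\uvec{c}^0=\uvec{c}_n$ is a datum embedded into the operators (entering through $\spacestuff(\uvec{c}^0)$ in $\lopdt^1$ and through the $\ell=0$ summand in $\lopdt^2$) rather than an unknown. I would state this mismatch between the summation range $\ell=0,\dots,M$ appearing inside the operator definitions and the component range $m=1,\dots,M$ of the ambient space, so that the reader does not mistakenly assign dimension $I\cdot Q\cdot(M+1)$ to $\cund$. With these three consistency checks and this caveat about $m=0$, the content of the remark is fully justified, and the indexing conventions established here can be used in the subsequent proofs of the coercivity-like and Lipschitz-continuity-like properties without further comment.
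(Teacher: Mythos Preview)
Your assessment is correct: the statement is a notational remark rather than a proposition, and the paper accordingly provides no proof for it. Your consistency check against the definitions of $\lopdi^1$ and $\lopdi^2$ and your explicit caveat about the $m=0$ subtimenode being a datum (not an unknown) capture exactly the content the remark is meant to convey.
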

In the proofs, we are going to focus on a single scalar component $q=1,\dots,Q$ of a single subtimenode $m=1,\dots,M$ and our results will be uniform with respect to the indices $q$ and $m$, so we will be able to pass from the scalar results to the desired vectorial results through an infinity norm $\norm{\cdot}_\infty$ on $\mathbb{R}^{(Q \times M)}$, similarly to what we did when we passed from \eqref{finalresultscalar} to \eqref{finalresultvectorial} in the preliminary results.
Therefore, the norm that we choose for the single component of $\underline{\uvec{c}}\in X=\mathbb{R}^{(I \times Q \times M)}$ with fixed indices $q$ and $m$, denoted by  ${\uvec{c}}^{q,m}\in \mathbb{R}^{I}$, is the $W^{1,1}_I(\Omega)$-norm, a discrete version of the classical $W^{1,1}(\Omega)$-norm. In particular, on a scalar function $u:\Omega \to \R$ the $W^{1,1}(\Omega)$-norm is defined as
\begin{equation}\label{eq:normW11}
	\norm{u}_{W^{1,1}(\Omega)}:=\norm{u}_{L^1(\Omega)}+\sum_{d=1}^D\norm{\frac{\partial}{\partial x_d}u}_{L^1(\Omega)}=\norm{u}_{L^1(\Omega)}+\norm{\norm{\nabla_{\uvec{x}}u}_1}_{L^1(\Omega)},
\end{equation}
from which we define the corresponding discrete norm on $\R^{I}$, defined by $\norm{\cdot}_{W^{1,1}_I(\Omega)} : \R^I\to \R^+_0$ as 
\begin{equation}
\norm{{\uvec{c}}^{q,m}}_{W^{1,1}_I(\Omega)}:= \norm{ \sum_{i=1}^I c_i^{q,m} \varphi_i }_{W^{1,1}(\Omega)}.
\label{abuseofnotationnorm}
\end{equation}
Using then the classical infinity norm on the space $\R^{Q\times M}$ defined by $\norm{\cdot}_{\infty,Q,M}:\R^{Q\times M} \to \R^+_0$, we introduce the $X$ norm $\norm{\cdot}_X:\R^{I\times Q \times M}\to \R^+_0$ as 
\begin{equation}
	\norm{\underline{\uvec{c}}}_X:=\norm{\left\lbrace \norm{{\uvec{c}}^{q,m}}_{W^{1,1}_I(\Omega)} \right\rbrace_{\substack{q=1,\dots,Q\\m=1,\dots,M}} }_{\infty,Q,M}.
	\label{Xnorm}
\end{equation}

Instead, we equip $Y$ with a different norm; we choose for the single component ${\tilde{\uvec{c}}}^{q,m}\in \mathbb{R}^I$ with fixed indices $q$ and $m$ of $\underline{\tilde{\uvec{c}}} \in Y=\mathbb{R}^{(I \times Q \times M)}$ the 1-norm  $\norm{\cdot}_{1,I}:\R^{I}\to \R^+_0$ defined as 
\begin{equation}
	\norm{{\tilde{\uvec{c}}}^{q,m}}_{1,I}:=\sum_{i=1}^I |\tilde{c}_i^{q,m}|,
\end{equation}
then the norm on the whole space $Y=\mathbb{R}^{(I \times Q \times M)}$, $\norm{\cdot}_Y:\R^{(I \times Q \times M)}\to \R^+_0$, is defined by
\begin{equation} 
\norm{\underline{\uvec{\tilde{c}}}}_Y=\norm{\left\lbrace\norm{{\tilde{\uvec{c}}}^{q,m}}_{1,I} \right \rbrace_{\substack{q=1,\dots,Q\\m=1,\dots,M}} }_{\infty,Q,M}.
\label{Ynorm}
\end{equation}
\begin{observation}
We remark that the initial subtimenode $m=0$ is not kept into account in the norms \eqref{Xnorm} and \eqref{Ynorm} as it is a datum of the problem.
\end{observation}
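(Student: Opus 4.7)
The final statement is a clarifying remark about the definitional choice in \eqref{Xnorm} and \eqref{Ynorm}, rather than a proposition requiring a substantive derivation. My plan is therefore to justify, in two complementary ways, why excluding $m=0$ from these norms is both structurally forced and conceptually harmless, using pieces already in place in this section.

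First, I would recall from \cref{obs:indices} and from the definitions of $\lopdt^1$ and $\lopdt^2$ in \eqref{l1iDeC} and \eqref{l2iDeC} that the generic argument $\underline{\uvec{c}} \in \mathbb{R}^{(I \times Q \times M)}$ collects only the unknowns at the subtimenodes $m=1,\ldots,M$: the coefficient vector $\uvec{c}^0=\uvec{c}(t^0)=\uvec{c}_n$ at the initial subtimenode is fixed input data of the current time step, and appears inside the operators (as $\uvec{c}_j^0$ and in $\spacestuff(\uvec{c}^0)$) rather than as a component of their argument. Consequently, the dimension of the ambient space is $I\times Q\times M$ and not $I\times Q\times (M+1)$, which by itself forces the index $m$ in the discrete norms to range over $\{1,\ldots,M\}$. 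This mirrors the analogous convention already made and repeatedly emphasized in the ODE case (see \eqref{l1ODE}--\eqref{l2ODE} and the proofs of the properties of $\lopdt^1$, $\lopdt^2$ in \cref{chapDECODE}).

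Second, to rule out any loss of information, I would remark that even if one formally extended the norms with an $m=0$ term of the form $\sum_{i}|v_i^{q,0}-w_i^{q,0}|$ or $\norm{\sum_i(v_i^{q,0}-w_i^{q,0})\varphi_i}_{W^{1,1}(\Omega)}$, the extra contribution would vanish identically on any two admissible arguments $\underline{\uvec{v}},\underline{\uvec{w}}$, because by construction their implicit $m=0$ components both coincide with the datum $\uvec{c}^0$. Thus including $m=0$ in \eqref{Xnorm} and \eqref{Ynorm} would add only zeros and not change the values of $\norm{\underline{\uvec{v}}-\underline{\uvec{w}}}_X$ and $\norm{\underline{\uvec{v}}-\underline{\uvec{w}}}_Y$; its omission is the cleaner convention and is fully consistent with treating $\uvec{c}^0$ as part of the problem data. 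There is no real obstacle to the argument; the only care needed is notational, namely to keep the index conventions of \cref{obs:indices} coherent with the definitions \eqref{Xnorm} and \eqref{Ynorm} so that no reader mistakenly promotes $\uvec{c}^0$ to the status of an unknown.
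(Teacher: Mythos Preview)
Your justification is correct and matches the paper's intent: the paper treats this statement as a brief clarifying remark without any accompanying proof, and your two points (the ambient space has only $M$ time components because $\uvec{c}^0$ is a fixed datum embedded in the operators, and any formal $m=0$ contribution to differences would vanish anyway) are precisely the considerations the surrounding text relies on implicitly.
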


\begin{observation}[On the choice of the norms]\label{obs:norms}
The reason of the difference in the norms assumed on $X$ and $Y$ is intuitively due to the following fact.
Practically speaking, the elements of $X$, the arguments of $\lopdt^2$ and $\lopdt^1$ given respectively by \eqref{l2DeC} and \eqref{l1DeC} (and so by \eqref{l2iDeC} and \eqref{l1iDeC}), are the coefficients associated to a vectorial continuous piecewise polynomial function evaluated in the subtimenodes $t^m$ $m=1,\dots,M$.
Therefore, on the space $X$ we take an integral norm for ``functions''.
Instead, the elements of the space $Y$, the images of $\lopdt^2$ and $\lopdt^1$, are consistent with integrals of the mentioned function associated to the coefficients.
In order to guarantee the consistency of the terms in the inequalities to prove and to compare $\norm{\cdot}_X$ and $\norm{\cdot}_Y$, we must take for $Y$ a norm which does not modify the integral ``character'' of the components of the elements of the space.
\end{observation}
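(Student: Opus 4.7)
The statement at issue is an informal observation rather than a theorem, so a ``proof'' here really means producing a rigorous narrative that backs up the intuitive claim that the different norms on $X$ and $Y$ are forced by the integral structure of the operators. My plan is to split the justification into two threads: a structural description of the range and domain of $\lopdt^1,\lopdt^2$, and a consistency argument showing that only a pairing of the stated type makes the coercivity-like and Lipschitz-continuity-like estimates dimensionally and quantitatively balanced in the mesh parameter $h$.

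First, I would inspect the explicit formulas \eqref{l2iDeC} and \eqref{l1iDeC}: every entry of $\lopdt^1(\cund)$ and $\lopdt^2(\cund)$ is a linear combination of quantities of the form $C_i(\uvec{c}^m_i-\uvec{c}^0_i)$ and $\int_K \varphi_i \varphi_j\,d\uvec{x}\,(\uvec{c}^m_j-\uvec{c}^0_j)$, plus time-integrated space residuals. Each such term scales like a local mass integral, i.e.\ like $\int_{\mathcal K_i}|\varphi_i|\,d\uvec{x}=O(h^D)$ times a coefficient. I would emphasise that summing absolute values of these components over $i$ produces a Riemann-sum-like quantity which is a natural discrete $L^1$ object. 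This is precisely why the norm on $Y$ is taken as $\|\cdot\|_{1,I}$ componentwise in $(q,m)$: it treats the output as a ``mass'' rather than as coefficient information, and so $\norm{\cdot}_Y$ is the right quantity to bound via preliminary results such as \cref{finalresult} and \cref{lem:equivalence_norm1_norminf}.

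Second, I would argue why $X$ must carry a $W^{1,1}_I(\Omega)$-type norm. The coefficients $\uvec{c}$ on $X$ parametrise a piecewise-polynomial function $\uapp=\sum_i \uvec{c}_i\varphi_i$; any meaningful bound on the outputs of $\lopdt^1-\lopdt^2$ will, by \cref{prop:bound_H1} and the mass-lumping estimate \eqref{finalresultvectorial}, inevitably involve $\|\nabla_{\x}\uapp\|_{L^\infty}$ multiplied by $\int_{\mathcal K_i}|\varphi_i|\,d\x$. Applying \cref{lem:equivalence_norm1_norminf} then converts the resulting sum into a $W^{1,1}(\Omega)$-type norm of $\uapp$. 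I would make explicit that if instead one chose an $\ell^\infty$- or $\ell^2$-type norm on $X$, the mesh parameter $h$ would appear on the wrong side of the estimate and the $\alpha_2 h$ factor required in the Lipschitz-continuity-like hypothesis (iii) of \cref{th:DeC} could not be extracted uniformly in $h$.

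The main obstacle, to the extent that there is one, is that the observation is labelled as ``intuitive'' and I must resist promoting it to a formal theorem. The cleanest presentation is therefore a paragraph-level justification built on three bullet points: (a) the outputs of $\lopdt^1,\lopdt^2$ behave like discrete $L^1$ measurements against $|\varphi_i|$; (b) the coercivity-like bound \eqref{eq:final_theorem_T}-style equality forces $\|\cdot\|_X$ to reproduce such $L^1$ measurements of coefficient differences, which, via \cref{prop:bound_H1}, matches the $W^{1,1}_I$ choice; (c) the rescaling by $h$ in hypothesis (iii) is then a direct consequence of the scaling of $\int_{\mathcal K_i}|\varphi_i|\,d\x$ and of $h$ in \eqref{crucialinequality}. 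This package of remarks is exactly what makes the ``consistency of the integral character'' in the observation rigorous, and I would keep it at the level of prose, citing the preliminary results rather than rederiving them.
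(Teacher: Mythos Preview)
The paper gives no proof for this observation: it is a pure remark, stated as intuition and immediately followed by the two regularity assumptions and the actual propositions on coercivity and Lipschitz-continuity. Your elaboration is sound and is consistent with how the preliminary results (\cref{finalresult}, \cref{lem:equivalence_norm1_norminf}, \cref{prop:bound_H1}) are deployed in those subsequent proofs, so there is nothing to compare against; if anything, you have supplied more justification than the authors intended at this point.
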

It is straightforward to prove that \eqref{Xnorm} and \eqref{Ynorm} are norms but we will not do it for the sake of brevity. In the context of the proofs of the properties of $\lopdt^2$ and $\lopdt^1$, we are going to make use of the two following regularity assumptions.
\begin{assumption}[Poincaré-like inequality] \label{ass:poincare}
	We assume that we are working with coefficients regular enough to guarantee that the associated functions $\uvec{g}_h$, for some $C_p\geq 0$ independent of $\Delta$, are such that
	\begin{equation}
		\norm{\uvec{g}_h}_{W^{1,1}(\Omega)}\leq C_p \norm{\uvec{g}_h}_{L^{1}(\Omega)},
		\label{poincare}
	\end{equation}
	i.e., we assume that we can control the norm of the gradient of all functions that we will consider with the norm of the functions.
\end{assumption}
\begin{assumption}[Smoothness of the space residuals] \label{ass:smooth_residual}
We assume the functions $\spacestuff$ defined in \eqref{spst} to be smooth.
\end{assumption}

Finally, the notation in \cref{eq:c_coefficients_sketch} will hold for two generic vectors $\underline{\uvec{v}},\underline{\uvec{w}}\in \mathbb{R}^{(I \times Q \times M)}$ that will be used in the proof.

In order to deal with the single component got for fixed $m$ and $q$, as we are going to do in a few lines, it is very useful to define here the scalar continuous piecewise polynomial functions 
\begin{equation}
v^{q,m}_{h}(\uvec{x})=\sum_{i=1}^I v^{q,m}_{i}\varphi_i(\uvec{x}), \quad w^{q,m}_{h}(\uvec{x})=\sum_{i=1}^I w^{q,m}_{i}\varphi_i(\uvec{x})
\label{scalarfunctions}
\end{equation}
associated to the scalar coefficients $v^{q,m}_{i}$ and $w^{q,m}_{i}$ $i=1,\dots,I$.

Now, we have all the elements that we need in order to handle the proofs of the properties of the two operators.

\begin{proposition}[Coercivity-like property of $\lopdt^1$]
Let $\lopdt^1:X\to Y$ be the operator defined in \eqref{l1DeC} and \eqref{l1iDeC}, $\underline{\uvec{v}},\underline{\uvec{w}}\in  X$ and suppose that \cref{ass:poincare} holds, then $\exists \alpha_1 > 0$ independent of $\Delta $ s.t. 
\begin{equation}\label{eq:coercivity}
		\norm{\lopdt^1(\underline{\uvec{v}})-\lopdt^1(\underline{\uvec{w}})}_Y \geq \alpha_1\norm{\underline{\uvec{v}}-\underline{\uvec{w}}}_X, \quad \forall \underline{\uvec{v}},\underline{\uvec{w}} \in \mathbb{R}^{(I \times Q \times M)}.
	\end{equation}
\end{proposition}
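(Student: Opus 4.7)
The plan is to exploit the simple, diagonal structure of $\lopdt^1$: all the data-dependent pieces cancel in the difference, reducing the lower bound to a purely linear-algebraic relation between Bernstein coefficients and the $L^1$-norm of the associated polynomial, after which the Poincaré-like assumption \cref{ass:poincare} upgrades the $L^1$ bound to a $W^{1,1}$ bound.

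First I would compute $\lopdt^1(\underline{\uvec{v}})-\lopdt^1(\underline{\uvec{w}})$ directly from \eqref{l1iDeC}. Since $\uvec{c}^0$ is problem data and enters $\lopdt^1$ only through the fixed terms $-C_i\uvec{c}_i^0$ and $\Delta t\beta^m\spacestuff(\uvec{c}^0)$, those contributions cancel in the difference and, componentwise for each subtimenode $m$, scalar component $q$ and DoF $i$, I am left with
\begin{equation}
\bigl[\lopdt^{1}(\underline{\uvec{v}})-\lopdt^{1}(\underline{\uvec{w}})\bigr]^{q,m}_{i}=C_i\bigl(v_i^{q,m}-w_i^{q,m}\bigr).
\end{equation}

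Next I would evaluate the $Y$-norm on this expression. Using that the Bernstein basis functions are nonnegative with $C_i=\int_\Omega \varphi_i\,d\uvec{x}>0$, the triangular inequality applied under the integral sign gives, for the scalar piecewise polynomial $v^{q,m}_h-w^{q,m}_h$ introduced in \eqref{scalarfunctions}, the chain
\begin{equation}
\sum_{i=1}^I C_i\,\abs{v_i^{q,m}-w_i^{q,m}}=\sum_{i=1}^I \abs{v_i^{q,m}-w_i^{q,m}}\int_\Omega \varphi_i(\uvec{x})\,d\uvec{x}\geq \int_\Omega\Bigl|\sum_{i=1}^I(v_i^{q,m}-w_i^{q,m})\varphi_i(\uvec{x})\Bigr|\,d\uvec{x}=\norm{v^{q,m}_h-w^{q,m}_h}_{L^1(\Omega)}.
\end{equation}
Hence $\norm{\bigl[\lopdt^{1}(\underline{\uvec{v}})-\lopdt^{1}(\underline{\uvec{w}})\bigr]^{q,m}}_{1,I}\geq \norm{v^{q,m}_h-w^{q,m}_h}_{L^1(\Omega)}$, uniformly in $(q,m)$.

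To compare with $\norm{\underline{\uvec{v}}-\underline{\uvec{w}}}_X$, I invoke \cref{ass:poincare} applied to the difference $v^{q,m}_h-w^{q,m}_h$, which yields $\norm{v^{q,m}_h-w^{q,m}_h}_{L^1(\Omega)}\geq C_p^{-1}\norm{v^{q,m}_h-w^{q,m}_h}_{W^{1,1}(\Omega)}$; by \eqref{abuseofnotationnorm} the right-hand side is exactly $C_p^{-1}\norm{(\uvec{v}-\uvec{w})^{q,m}}_{W^{1,1}_I(\Omega)}$. Taking the $\norm{\cdot}_{\infty,Q,M}$ on both sides, the previous step being uniform in $(q,m)$, gives
\begin{equation}
\norm{\lopdt^1(\underline{\uvec{v}})-\lopdt^1(\underline{\uvec{w}})}_Y\geq C_p^{-1}\norm{\underline{\uvec{v}}-\underline{\uvec{w}}}_X,
\end{equation}
so \eqref{eq:coercivity} holds with $\alpha_1=C_p^{-1}$, which is independent of $\Delta$ by \cref{ass:poincare}.

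The one potentially delicate step is the $L^1$-to-coefficient inequality in the second paragraph: it crucially uses nonnegativity of the Bernstein basis so that $\int_\Omega|\varphi_i|\,d\uvec{x}=\int_\Omega\varphi_i\,d\uvec{x}=C_i$, and it is the place where the asymmetric choice of norms on $X$ and $Y$ (discussed in \cref{obs:norms}) pays off: the $1$-norm on $Y$ preserves the integral/weight character of the $C_i$'s, whereas a different norm (e.g., $\infty$) would have produced an $h$-dependent constant and spoiled the coercivity. For non-negative polynomial bases the same argument goes through by replacing $\varphi_i$ by $|\varphi_i|$ in the definition of $C_i$ and in the triangular inequality.
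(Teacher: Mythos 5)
Your proposal is correct and follows essentially the same route as the paper's proof: the cancellation of the data terms to reduce to $C_i(v_i^{q,m}-w_i^{q,m})$, the use of nonnegativity of the Bernstein basis plus the triangle inequality under the integral to bound the $1$-norm over DoFs from below by $\norm{v^{q,m}_h-w^{q,m}_h}_{L^1(\Omega)}$, the Poincar\'e-like assumption to pass to the $W^{1,1}$ norm, and the final uniform-in-$(q,m)$ passage to the infinity norm, yielding $\alpha_1=1/C_p$. No gaps.
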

\begin{proof}
From a direct computation we have, for every $i=1,\dots ,I$, $m=1,\dots,M$ and $q=1,\dots, Q$, that
\begin{equation}
	\begin{split}
\lopdi^{1,q,m}(\underline{\uvec{v}})-\lopdi^{1,q,m}(\underline{\uvec{w}})
&=   C_i\left( v_i^{q,m}\!-c_i^{0,q}\right)+\Delta t \beta^m\spacestuff^q(\uvec{c}^0)
-      C_i\left( w_i^{q,m}\!-c_i^{0,q}\right)-\Delta t \beta^m\spacestuff^q(\uvec{c}^0)\\
&=
        C_i\left(v_i^{q,m}\!-w_i^{q,m}\right).
	\end{split}\label{l1decsub}
\end{equation}

We remark again that $\uvec{c}^0$ is known and so also $\uvec{c}_i^0$.
We will start by proving the coercivity-like property for a fixed component $q$ and a fixed subtimenode $m$, i.e., we will prove that the $1$-norm of \eqref{l1decsub} over the indexes $i=1,\dots,I$ is such that
\begin{align}
\norm{\lopdt^{1,q,m}(\underline{\uvec{v}})-\lopdt^{1,q,m}(\underline{\uvec{w}})}_{1,I}\geq \alpha_1 \norm{v^{q,m}_h-w^{q,m}_h}_{W^{1,1}(\Omega)}
\end{align}
for some $\alpha_1$ independent of $\Delta$ for all $m$ and $q$.
Recalling the definition \eqref{dualcell} of the coefficients $C_i=\int_{\Omega} \varphi_i(\uvec{x}) d \uvec{x}$ and the fact that the Bernstein basis functions are nonnegative, we have 
\begin{align}
\norm{\lopdt^{1,q,m}(\underline{\uvec{v}})-\lopdt^{1,q,m}(\underline{\uvec{w}})}_{1,I}&=\sum_{i=1}^I \vert C_i\left( v_i^{q,m}-w_i^{q,m}\right) \vert=\sum_{i=1}^I  \int_\Omega \left \vert (v_i^{q,m}-w_i^{q,m}) \varphi_i(\uvec{x})\right \vert d\uvec{x}.
\label{suburra1}
\end{align}
Using the triangular inequality and recalling the definition \eqref{scalarfunctions} of the scalar continuous piecewise polynomial functions $v^{q,m}_h$ and $w^{q,m}_h$, from the previous equation we get 
\begin{align}
\begin{split}
\norm{\lopdt^{1,q,m}(\underline{\uvec{v}})-\lopdt^{1,q,m}(\underline{\uvec{w}})}_{1,I}
\geq &\int_\Omega  \left\lvert \sum_{i=1}^I (v_i^{q,m}-w_i^{q,m})\varphi_i(\uvec{x}) \right\rvert d\uvec{x}\\
=& \int_\Omega \left\lvert v^{q,m}_h(\uvec{x})-w^{q,m}_h(\uvec{x}) \right\rvert d\uvec{x}=\norm{v^{q,m}_h-w^{q,m}_h }_{L^1(\Omega)}\\
\geq &\frac{1}{C_p} \norm{v^{q,m}_h-w^{q,m}_h}_{W^{1,1}(\Omega)}=\alpha_1 \norm{v^{q,m}_h-w^{q,m}_h}_{W^{1,1}(\Omega)},
\end{split}
\label{eq:poincare_step}
\end{align}
where, in the last inequality, we used the Poincaré-like inequality \eqref{poincare} and
$\alpha_1=\frac{1}{C_p}$ with $C_p$ independent of $\Delta$, which is the intermediate result that we wanted to show.

In order to get the final result, it suffices to observe that the previous inequality is uniform with respect to the indices $q$ and $m$, so, we can take the infinity norm on these indices of both the sides and get
\begin{align}
\norm{\lopdt^{1}(\underline{\uvec{v}})-\lopdt^{1}(\underline{\uvec{w}})}_{Y} \geq \alpha_1  \norm{\underline{\uvec{v}}-\underline{\uvec{w}}}_{X}
\end{align}
using the definitions \eqref{Xnorm} and \eqref{Ynorm}.
\end{proof}

\begin{proposition}[Lipschitz-continuity-like condition of $\lopdt^1-\lopdt^2$]
Let $\lopdt^1,\lopdt^2:X\to Y$ the operators defined in \eqref{l1DeC} and \eqref{l2DeC}. Consider $\underline{\uvec{v}},\underline{\uvec{w}}\in  X$ regular enough and suppose that \cref{ass:smooth_residual} holds. Then, $\exists \alpha_2 > 0$ independent of $\Delta $ s.t.
	\begin{align}
		\norm{\left[\lopdt^1(\underline{\uvec{v}})-\lopdt^2(\underline{\uvec{v}})\right]-\left[\lopdt^1(\underline{\uvec{w}})-\lopdt^2(\underline{\uvec{w}})\right]}_Y \leq \alpha_2 \Delta  \norm{\underline{\uvec{v}}-\underline{\uvec{w}}}_X.
	\end{align}
\end{proposition}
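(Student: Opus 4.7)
The plan is to proceed by direct computation of the difference $[\lopdt^1(\underline{\uvec{v}})-\lopdt^2(\underline{\uvec{v}})]-[\lopdt^1(\underline{\uvec{w}})-\lopdt^2(\underline{\uvec{w}})]$ component-by-component (fixing a DoF index $i$, a subtimenode $m$, and a scalar component $q$), then summing over $i$ to match the $Y$-norm, and finally taking the infinity norm over $(q,m)$. First, I would observe the two convenient cancellations: the terms involving the initial datum $\uvec{c}^0$ cancel entirely (both the mass-matrix contribution $C_i c_i^{0,q}$ versus $\sum_{K,j}\int_K\varphi_i\varphi_j\, c_j^{0,q}$ and the Euler residual $\Delta t\beta^m\spacestuff^q(\uvec{c}^0)$ disappear when the difference in $\underline{\uvec{v}}$ is subtracted from the difference in $\underline{\uvec{w}}$). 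What remains, at index $(i,q,m)$, is
\begin{equation}
C_i\bigl(v_i^{q,m}-w_i^{q,m}\bigr)-\!\!\sum_{K\in K_i}\sum_{\uvec{x}_j\in K}\!\Bigl(\!\int_K\!\varphi_i\varphi_j\,d\uvec{x}\Bigr)\bigl(v_j^{q,m}-w_j^{q,m}\bigr)-\Delta t\!\sum_{\ell=0}^{M}\theta^m_\ell\bigl[\spacestuff^q(\uvec{v}^\ell)-\spacestuff^q(\uvec{w}^\ell)\bigr].
\end{equation}

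I would then split this into a mass-lumping piece and a space-residual piece via the triangle inequality. For the mass-lumping piece, I would apply \cref{finalresult} to the scalar function $(v^{q,m}_h-w^{q,m}_h)=\sum_i(v_i^{q,m}-w_i^{q,m})\varphi_i$, obtaining a bound of the form $\hat{C}h\,\norm{\norm{\nabla_{\uvec{x}}(v_h^{q,m}-w_h^{q,m})}_1}_{L^\infty(\mathcal{K}_i)}\int_{\mathcal{K}_i}|\varphi_i|\,d\uvec{x}$. Summing over $i=1,\dots,I$, I would invoke \cref{lem:equivalence_norm1_norminf} to convert this to a global $L^1$-bound controlled by $\hat{C}\,\tilde{C}^*\,h\,\norm{\norm{\nabla_{\uvec{x}}(v_h^{q,m}-w_h^{q,m})}_1}_{L^1(\Omega)}$, which is dominated by $\hat{C}\,\tilde{C}^*\,h\,\norm{v_h^{q,m}-w_h^{q,m}}_{W^{1,1}(\Omega)}=\hat{C}\,\tilde{C}^*\,h\,\norm{\uvec{v}^{q,m}-\uvec{w}^{q,m}}_{W^{1,1}_I(\Omega)}$.

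For the space-residual piece, I would use \cref{ass:smooth_residual}: the smoothness of $\spacestuff$, combined with the Lipschitz-continuity of $\flux$ and $\source$ and the uniform boundedness of the basis functions, yields an $i$-local Lipschitz estimate of the form $|\spacestuff_i^q(\uvec{v}^\ell)-\spacestuff_i^q(\uvec{w}^\ell)|\leq L_\Phi \norm{v_h^{q,\ell}-w_h^{q,\ell}}_{W^{1,1}(\mathcal{K}_i)}$. Summing over $i$, using that each element contributes to a bounded number of patches $\mathcal{K}_i$ (mesh regularity) and that $|\theta^m_\ell|\leq C_\theta$, produces a bound of the form $\Delta t\, C_\theta L_\Phi (M+1)\norm{v_h^{q,\ell}-w_h^{q,\ell}}_{W^{1,1}(\Omega)}$. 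Since $\Delta t\leq Ch=C\Delta$, this piece also carries the required $\Delta$ factor.

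Combining the two pieces, pulling the supremum over the inner indices out, and taking the infinity norm over $q=1,\dots,Q$ and $m=1,\dots,M$ gives the desired estimate with $\alpha_2=\hat{C}\tilde{C}^*+C\cdot C_\theta L_\Phi (M+1)$ (possibly times the CFL constant). The main obstacle I anticipate is the space-residual step: one must carefully justify the local Lipschitz-like bound for $\spacestuff_i^q$ in the $W^{1,1}$ discrete norm, taking into account both the divergence term $\div_{\uvec x}\flux$ (which costs a derivative, but is compensated by the $W^{1,1}$ norm carrying one) and the stabilization term $\extra$, while ensuring all constants are independent of $h$. The mass-lumping part is essentially bookkeeping once \cref{finalresult} and \cref{lem:equivalence_norm1_norminf} are in hand, but combining the patch-wise $L^\infty$ control of the gradient with the global $L^1$ estimate through the mesh-regularity assumption is the structural heart of the argument.
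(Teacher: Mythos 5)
Your proposal follows essentially the same route as the paper's proof: the same cancellation of the $\uvec{c}^0$ terms, the same triangle-inequality split into a mass-lumping term (bounded via \cref{finalresult} and then \cref{lem:equivalence_norm1_norminf}) and a space-residual term carrying the $\Delta t\leq C\Delta$ factor, and the same final passage to the infinity norm over $(q,m)$. The only divergence is in the residual term, where the paper simply postulates the global bound $\sum_{i}\abs{\phi_i^q(\uvec{v}^\ell)-\phi_i^q(\uvec{w}^\ell)}\leq C_\phi\norm{\underline{\uvec{v}}-\underline{\uvec{w}}}_X$ as the content of \cref{ass:smooth_residual}, whereas you propose to derive it from local Lipschitz estimates and bounded patch overlap --- precisely the step you flag as the main obstacle, and one the paper does not carry out either.
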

\begin{proof}
Focusing on one DoF $i\in \{1,\dots,I\}$ and on one subtimenode $m\in\{1,\dots,M\}$, we have 
\begin{equation}
	\begin{split}
&\left[ \mathcal{L}^{1,m}_{\Delta,i}(\underline{\uvec{v}})- \mathcal{L}^{2,m}_{\Delta,i}(\underline{\uvec{v}}) \right]- \left[ \mathcal{L}^{1,m}_{\Delta,i}(\underline{\uvec{w}})- \mathcal{L}^{2,m}_{\Delta,i}(\underline{\uvec{w}}) \right]=\\
&  C_i\left( \uvec{v}_i^m-\uvec{w}_i^m\right)-\sum_{K\in K_i}\sum_{\uvec {x}_j \in K} \left( \uvec{v}_j^m-\uvec{w}_j^m\right) \int_K \varphi_i(\uvec{x}) \varphi_j(\uvec{x}) d \uvec{x}-\Delta t \sum_{\ell=0}^{M} \theta^m_\ell \left[\spacestuff(\uvec{v}^\ell)-\spacestuff(\uvec{w}^\ell) \right]. 
	\end{split}\label{l2declip}
\end{equation}

Just like we did when we proved the coercivity-like property of $\lopdt^1$, we will work on the single component of \eqref{l2declip} for fixed $q=1,\dots,Q$ and $m=1,\dots,M$, then we will derive the final result on the norms of $X$ and $Y$ by considering the $\infty$-norm over the indices $q$ and $m$.

Let us thus focus on
\begin{align}
	\begin{split}
&\left[ \mathcal{L}^{1,q,m}_{\Delta,i}(\underline{\uvec{v}})- \mathcal{L}^{2,q,m}_{\Delta,i}(\underline{\uvec{v}}) \right]- \left[ \mathcal{L}^{1,q,m}_{\Delta,i}(\underline{\uvec{w}})- \mathcal{L}^{2,q,m}_{\Delta,i}(\underline{\uvec{w}}) \right] =C_i\left( v_i^{q,m}-w_i^{q,m}\right)\\
-&\sum_{K\in K_i}\sum_{\uvec {x}_j \in K} \left( v_j^{q,m}-w_j^{q,m}\right) \int_K \varphi_i(\uvec{x}) \varphi_j(\uvec{x}) d \uvec{x} -\Delta t \sum_{\ell=0}^{M} \theta^m_\ell \left[\phi_i^q(\uvec{v}^\ell)-\phi_i^q(\uvec{w}^\ell) \right]
	\end{split}
	\label{nomeacase}
\end{align}
where $\phi_i^q(\cdot)$ represents the $q$-th component of the space residual $\spacestuff(\cdot).$
We want to show now that the $1$-norm, over all the indices $i$, of \eqref{nomeacase}, for fixed $q$ and $m$, is such that
\begin{align}\label{eq:lipschitz}
&\norm{\left[ \mathcal{L}^{1,q,m}_{\Delta}(\underline{\uvec{v}})- \mathcal{L}^{2,q,m}_{\Delta}(\underline{\uvec{v}}) \right]- \left[ \mathcal{L}^{1,q,m}_{\Delta}(\underline{\uvec{w}})- \mathcal{L}^{2,q,m}_{\Delta}(\underline{\uvec{w}}) \right]}_{1,I}\leq \alpha_2 \Delta  \norm{\underline{\uvec{v}}-\underline{\uvec{w}}}_{X},
\end{align}
for some $\alpha_2$ independent of $\Delta$, from which we will get the final result by taking the infinity norm of the left hand side with respect to the indices $q$ and $m$.
Thanks to the triangular inequality we have
\begin{align}
&\norm{\left[ \mathcal{L}^{1,q,m}_{\Delta}(\underline{\uvec{v}})- \mathcal{L}^{2,q,m}_{\Delta}(\underline{\uvec{v}}) \right]- \left[ \mathcal{L}^{1,q,m}_{\Delta}(\underline{\uvec{w}})- \mathcal{L}^{2,q,m}_{\Delta}(\underline{\uvec{w}}) \right]}_{1,I}\\
&=\sum_{i=1}^I \left\vert \left[ \mathcal{L}^{1,q,m}_{\Delta,i}(\underline{\uvec{v}})- \mathcal{L}^{2,q,m}_{\Delta,i}(\underline{\uvec{v}}) \right]- \left[ \mathcal{L}^{1,q,m}_{\Delta,i}(\underline{\uvec{w}})- \mathcal{L}^{2,q,m}_{\Delta,i}(\underline{\uvec{w}}) \right] \right\vert\\
&\leq \underbrace{\sum_{i=1}^I \left\vert\sum_{K\in K_i}\left( v_i^{q,m}-w_i^{q,m}\right) \int_K \varphi_i(\uvec{x})d\uvec{x}-\sum_{K\in K_i}\sum_{\uvec {x}_j \in K} \left( v_j^{q,m}-w_j^{q,m}\right)\int_K \varphi_i(\uvec{x}) \varphi_j(\uvec{x}) d \uvec{x}\right\vert}_{=:E_1} \label{useful1}\\
&+\underbrace{\sum_{i=1}^I \left\vert\Delta t \sum_{\ell=0}^{M} \theta^m_\ell \left[\phi^q_i(\uvec{v}^\ell)-\phi^q_i(\uvec{w}^\ell) \right]\right\vert}_{=:E_2}
\label{useful2}
\end{align}
recalling the definition of $C_i=\sum_{K \in K_i} \int_K \varphi(\uvec{x}) d\uvec{x}$ in \eqref{dualcell}.

Thanks to the previous inequality, we can deal separately with the two terms of the right hand side, the first one \eqref{useful1} concerning the mass matrix and the second one \eqref{useful2} involving the space residuals, and show that they can be bounded in the following way
\begin{align}
&E_1\leq C_a \Delta \norm{\underline{\uvec{v}}-\underline{\uvec{w}}}_{X},
\label{final1}\\
&
E_2\leq C_b \Delta \norm{\underline{\uvec{v}}-\underline{\uvec{w}}}_{X},
\label{final2}
\end{align}

with $C_a$ and $C_b$ independent of $\Delta$ which would give us the desired result. \\[1em]
$\bullet$ \textbf{First term concerning the mass matrix}\\
In order to bound this term, we can directly apply the preliminary result in \cref{finalresult} and we get
\begin{align}
&\left\vert \sum_{K\in K_i}  \left( v_i^{q,m}-w_i^{q,m}\right)\int_K \varphi_i(\uvec{x})d\uvec{x} -\sum_{K\in K_i}\sum_{\uvec{x}_j\in K} \left( v_j^{q,m}-w_j^{q,m}\right) \int_K \varphi_i(\uvec{x})\varphi_j(\uvec{x})d\uvec{x}   \right\vert \nonumber\\
&\leq \hat{C}\Delta C_i \norm{ \norm{\nabla_{\uvec{x}}\left( v_h^{q,m}-w_h^{q,m}\right)}_1 }_{L^\infty(\mathcal{K}_i)}, \quad \forall i=1,\dots,I,
\label{randomname1}
\end{align}
with $\hat{C}$ independent of the mesh parameter $\Delta=h$, dependent just on the number of dimensions $D$, on the degree $M$ and on the type of the elements in the mesh.
From \eqref{randomname1} we have
\begin{align}
E_1=&\sum_{i=1}^I \left\vert \sum_{K\in K_i}  \left( v_i^{q,m}-w_i^{q,m}\right)\int_K \varphi_i(\uvec{x})d\uvec{x} -\sum_{K\in K_i}\sum_{\uvec{x}_j\in K} \left( v_j^{q,m}-w_j^{q,m}\right) \int_K \varphi_i(\uvec{x})\varphi_j(\uvec{x})d\uvec{x}   \right\vert \nonumber \\
&\leq \hat{C}\Delta  \sum_{i=1}^I \norm{ \norm{\nabla_{\uvec{x}}\left( v_h^{q,m}-w_h^{q,m}\right)}_1 }_{L^\infty(\mathcal{K}_i)} C_i .
\label{randomname2}
\end{align}

Thanks to \cref{lem:equivalence_norm1_norminf} taking $z=\norm{v_h^{q,m}-w_h^{q,m}}_1$, then \eqref{randomname2} can be bounded in the following way
\begin{equation}
\hat{C}\Delta \sum_{i=1}^I\norm{ \norm{\nabla_{\uvec{x}}\left( v_h^{q,m}-w_h^{q,m}\right)}_1 }_{L^\infty(\mathcal{K}_i)}  C_i\leq \hat{C}\Delta \tilde{C}^* \norm{ \norm{\nabla_{\uvec{x}}\left( v_h^{q,m}-w_h^{q,m}\right)}_1 }_{L^{1}(\Omega)}.
\end{equation}
Hence, by definition of the $W^{1,1}(\Omega)$-norm \eqref{eq:normW11}, of the $W^{1,1}_I(\Omega)$-norm \eqref{abuseofnotationnorm} and of the $X$ norm \eqref{Xnorm}, we have
\begin{align}
\begin{split}
	E_1 &\leq 
\hat{C}\Delta \tilde{C}^* \norm{ \norm{\nabla_{\uvec{x}}\left( v_h^{q,m}-w_h^{q,m}\right)}_1 }_{L^{1}(\Omega)}\leq \hat{C}\tilde{C}^*\Delta  \norm{ v_h^{q,m}-w_h^{q,m} }_{W^{1,1}(\Omega)} \\
&\leq  C_a \Delta  \norm{ v_h^{q,m}-w_h^{q,m} }_{W^{1,1}(\Omega)} \leq C_a \Delta \norm{ \underline{\uvec{v}}-\underline{\uvec{w}} }_X,
\end{split}
\end{align}
with $C_a=\hat{C}\tilde{C}^*$ independent of $\Delta$.\\[1em]

$\bullet$ \textbf{Second term involving the space residuals}\\
By applying the triangular inequality, recalling that $\theta^m_\ell$ are fixed normalized constant coefficients, thus, bounded in absolute value by a positive constant $C_\theta$, and that $\Delta t\leq Ch=C\Delta$ for some fixed constant $C$, we have
\begin{align}
E_2=\sum_{i=1}^I \left\vert\Delta t \sum_{\ell=0}^{M} \theta^m_\ell \left[\phi^q_i(\uvec{v}^\ell)-\phi^q_i(\uvec{w}^\ell) \right]\right\vert\leq \Delta C C_\theta \sum_{i=1}^I  \sum_{\ell=0}^{M} \abs{\phi^q_i(\uvec{v}^\ell)-\phi^q_i(\uvec{w}^\ell) }.
\end{align}

From the fact that $\uvec{v}^0=\uvec{w}^0=\uvec{c}^0$, we have
\begin{align}
\Delta C C_\theta \sum_{i=1}^I  \sum_{\ell=0}^{M} \abs{\phi^q_i(\uvec{v}^\ell)-\phi^q_i(\uvec{w}^\ell) }
\leq
\Delta C C_\theta M \sum_{i=1}^I  \norm{ \left\lbrace \phi^q_i(\uvec{v}^m)-\phi^q_i(\uvec{w}^m)   \right\rbrace_{\substack{q=1,\dots,Q\\m=1,\dots,M}} }_{\infty,Q,M}.
\label{tobecontinued1}
\end{align}
Then, we use the assumption of smoothness of the space residuals $\spacestuff(\cdot)$. In particular, we assume the following Lipschitz-continuity-like condition
\begin{equation}
\sum_{i=1}^I  \norm{ \left\lbrace \phi^q_i(\uvec{v}^m)-\phi^q_i(\uvec{w}^m)   \right\rbrace_{\substack{q=1,\dots,Q\\m=1,\dots,M}} }_{\infty,Q,M}\leq C_\phi\norm{ \norm{ \underline{\uvec{v}}_h-\underline{\uvec{w}}_h }_{W^{1,1}(\Omega)} }_{\infty,Q,M}=C_\phi\norm{\underline{\uvec{v}} -\underline{\uvec{w}} }_X
\label{otherassumption}
\end{equation}
with $C_\phi$ independent of $\Delta$.
Using this, from \eqref{tobecontinued1} we get
\begin{align}
E_2 \leq C_b \Delta \norm{\underline{\uvec{v}} -\underline{\uvec{w}} }_X
\end{align}
with $C_b=C C_\theta M C_\phi$ independent of $\Delta$, obtaining \eqref{final2}.

Now, that we have proven \eqref{final1} and \eqref{final2}, the Lipschitz inequality \eqref{eq:lipschitz} is proven with $\alpha_2=C_a+C_b$ independent of $\Delta$.
Finally, we get the final result by observing that what we have proved holds for any component with fixed indices $q=1,\dots,Q$ and $m=1,\dots,M$. So, applying the infinity norm of the left hand side with respect to these indices, we get
\begin{align}
\begin{split}
\max_{q,m} \norm{\left[ \mathcal{L}^{1,q,m}_{\Delta}(\underline{\uvec{v}})- \mathcal{L}^{2,q,m}_{\Delta}(\underline{\uvec{v}}) \right]- \left[ \mathcal{L}^{1,q,m}_{\Delta}(\underline{\uvec{w}})- \mathcal{L}^{2,q,m}_{\Delta}(\underline{\uvec{w}}) \right]}_{1,I}&\\
=\norm{\left[ \mathcal{L}^{1}_{\Delta}(\underline{\uvec{v}})- \mathcal{L}^{2}_{\Delta}(\underline{\uvec{v}}) \right]- \left[ \mathcal{L}^{1}_{\Delta}(\underline{\uvec{w}})- \mathcal{L}^{2}_{\Delta}(\underline{\uvec{w}}) \right]}_{Y}
&\leq\alpha_2 \Delta \norm{\underline{\uvec{v}} -\underline{\uvec{w}} }_X,
\end{split}
\end{align}
which is the thesis.
\end{proof}

\subsection{Issues with the DeC for CG}
We discuss here a negative result seen in the numerical tests even on the monodimensional linear advection equation (LAE) reported in the main document and in many other works, e.g. \cite{Decremi,michel2022spectral,abgrall2019high}. 
The DeC formulation for PDEs with the lumping of the mass matrix does not give the expected formal order of accuracy for space discretizations of order higher than or equal to $4$ if one performs the theoretical optimal number of iterations. 
In this section, we will try to investigate the problem by numerically assessing the impact of the number of iterations $P$, of the CFL and of the CIP stabilization on higher order derivatives.
Before starting, we remark that the loss in the accuracy is not registered in the context of steady problems, indeed, in \cite{abgrall2020high} the expected order of accuracy is obtained with B3 on a nontrivial steady test for the bidimensional Euler equations. 
Further, one of the authors is involved in a project \cite{nonsubmitted} on some novel CIP stablizations for the monodimensional SW equations, soon to be submitted, in which the right order of accuracy is obtained for P3, B3 and B4 with the theoretical optimal number of iterations on all the considered steady tests. 
Therefore, we will focus on the same unsteady test for the monodimensional LAE presented in the main document and, in particular, we will consider P3, B3 and B4 as basis functions and the original formulation of the bDeC for PDEs without interpolations between the iterations as timestepping method. For P3 and B3 we will use, in the context of the CIP stabilization, the same coefficients adopted in the main document, $\delta^{\CIP}=0.00702$. As the optimal coefficient for B4 is not provided in \cite{michel2021spectral}, we will adopt the same coefficient as for B3 and P3.
The reference CFL adopted for the tests with B3 and P3 is 0.1, instead, with B4 it is 0.05. Where not specified, such values have been adopted.

\subsubsection{Impact of the number of iterations}
The numerical results for different number of iterations are reported in  \cref{fig:ITER}.
In all the cases we can see the same trend: the optimal number of iterations gives order 2, increasing the number of iterations improves the accuracy allowing to reach the formal order. Nevertheless, it is important to notice that many more iterations, with respect to the optimal number, are needed in order to achieve the right order of convergence: 10 for P3, 80 for B3, 320 for B4.

\begin{figure}
\centering
	\begin{subfigure}{0.49\textwidth}
		\includegraphics[width=\textwidth]{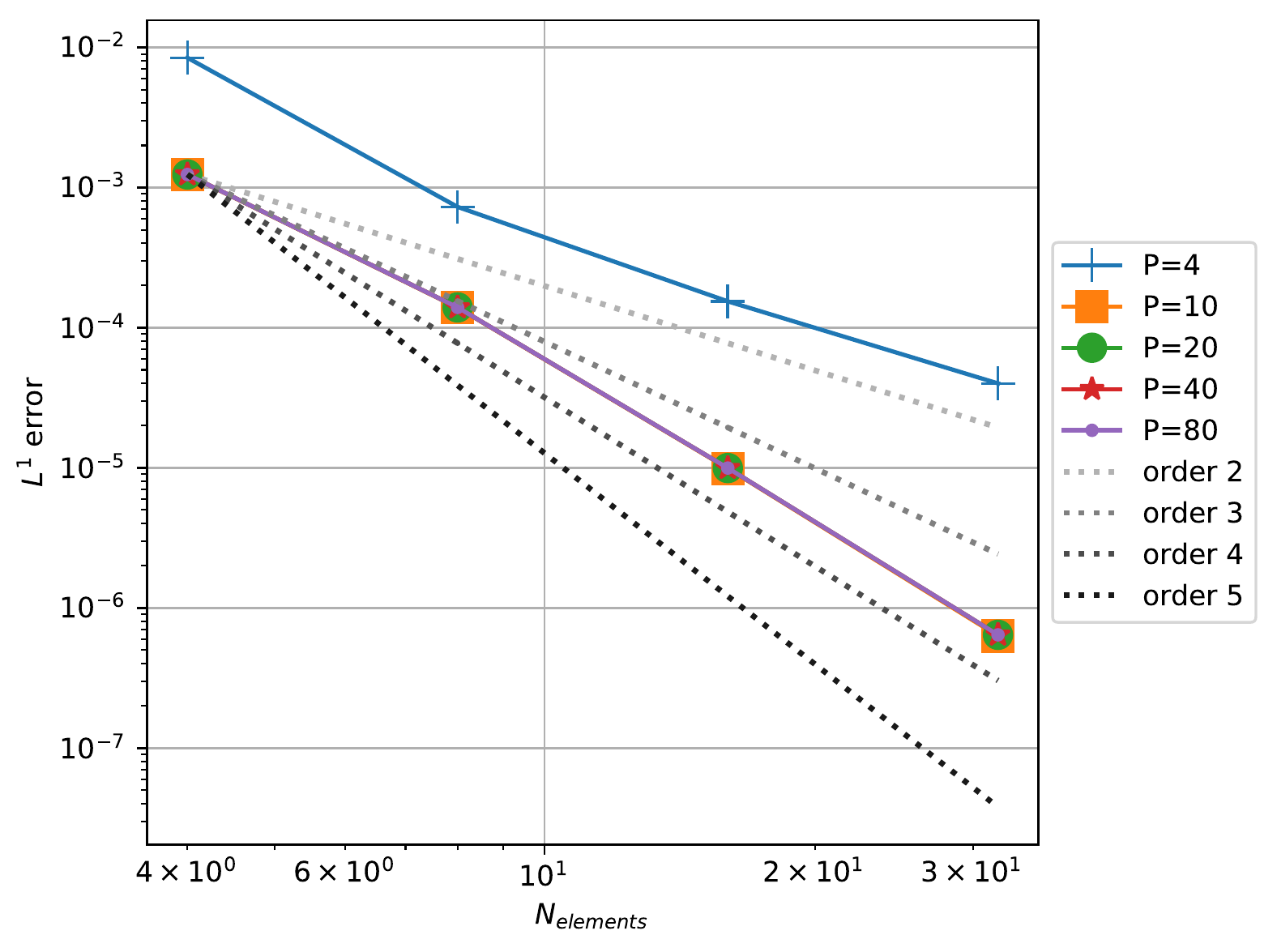}\caption{P3}\label{subfig:P3ITER}
	\end{subfigure}
	\begin{subfigure}{0.49\textwidth}
	\includegraphics[width=\textwidth]{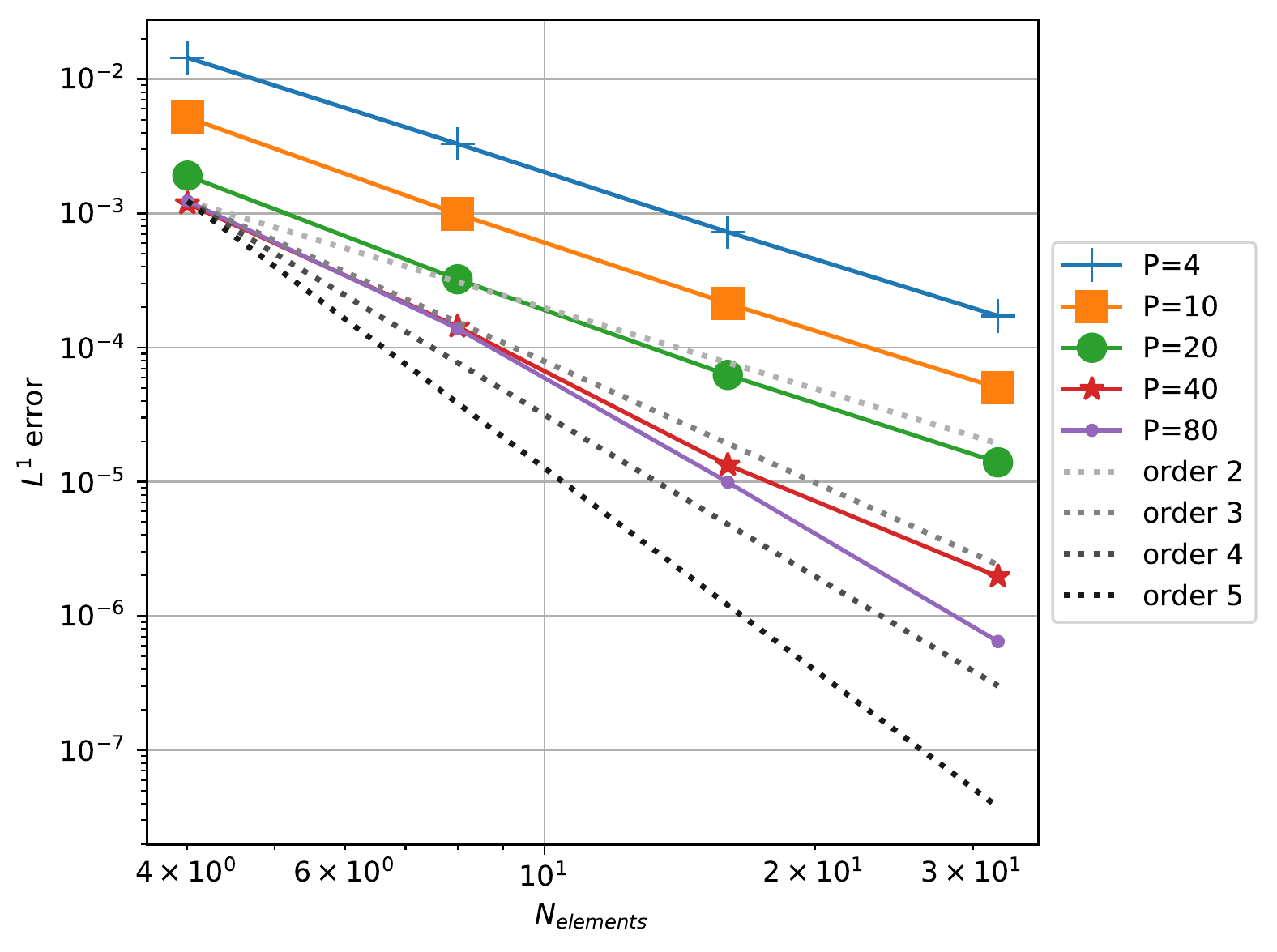}\caption{B3}\label{subfig:B3ITER}
\end{subfigure}\\
	\begin{subfigure}{0.49\textwidth}
		\includegraphics[width=\textwidth]{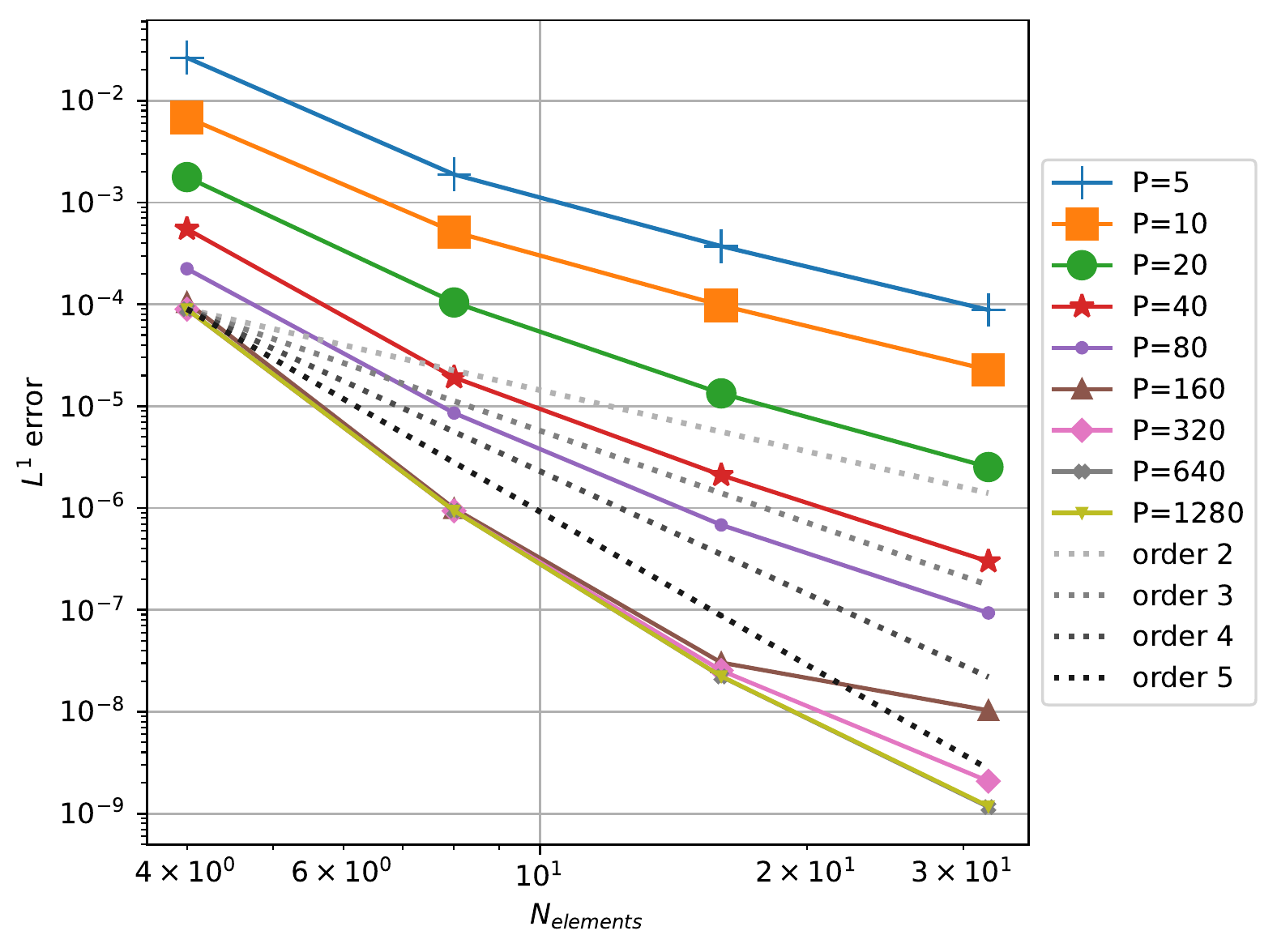}\caption{B4}\label{subfig:B4ITER}
	\end{subfigure}
\caption{1D LAE: tests with different numbers of iterations}\label{fig:ITER}
\end{figure}

\subsubsection{Impact of the CFL}
The numerical results for different values of the CFL are reported in  \cref{fig:CFL}.
Such parameter seems not to have impact on the order. For P3, $\CFL=0.1$ performs better than $\CFL=0.01$ and $\CFL=0.001$; for the other basis functions one gets similar results for the different values of the $\CFL$ meaning that spatial error is dominating with respect to the error in time.

\begin{figure}
\centering
	\begin{subfigure}{0.49\textwidth}
		\includegraphics[width=\textwidth]{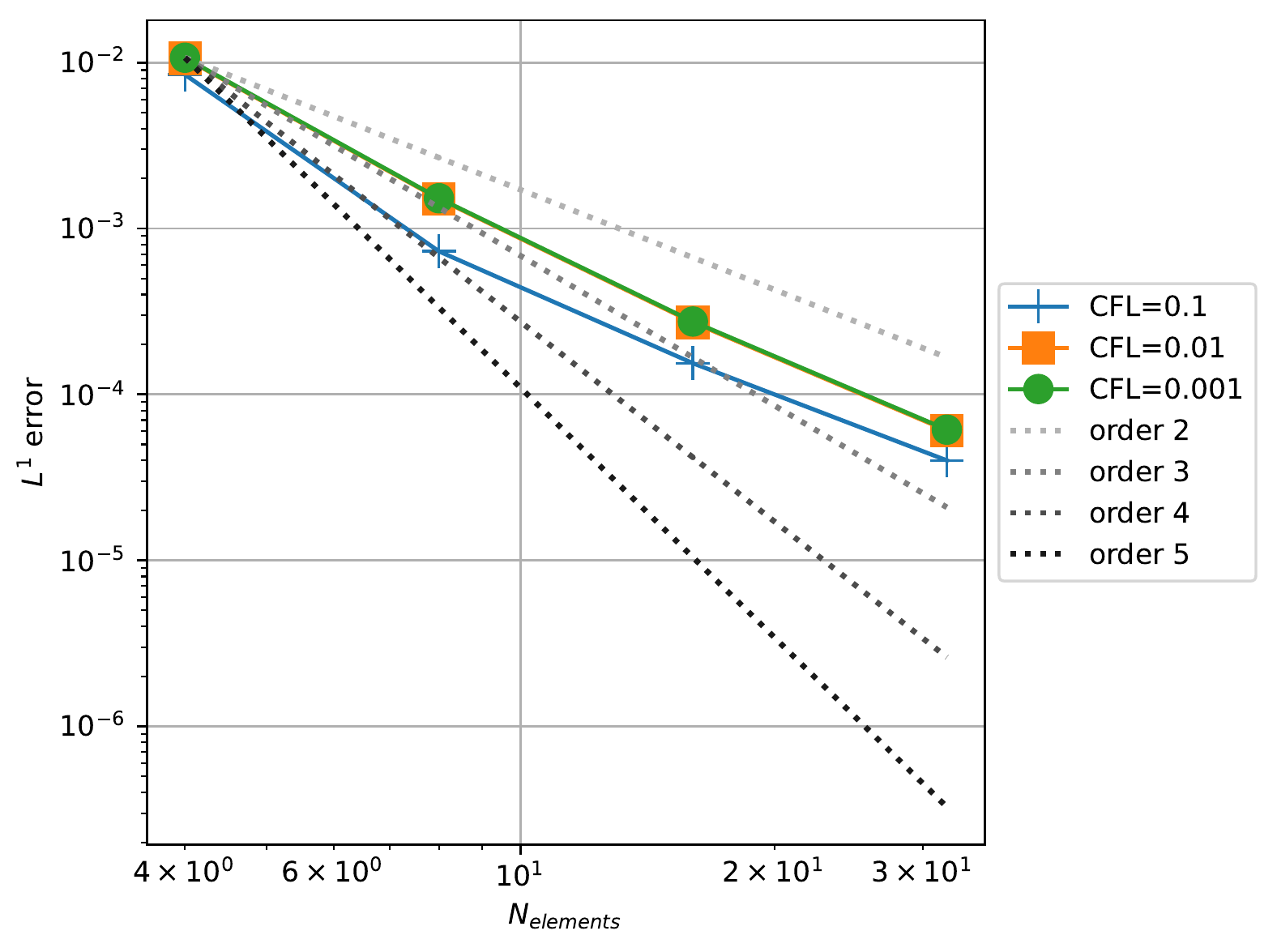}\caption{P3}\label{subfig:P3CFL}
	\end{subfigure}
	\begin{subfigure}{0.49\textwidth}
	\includegraphics[width=\textwidth]{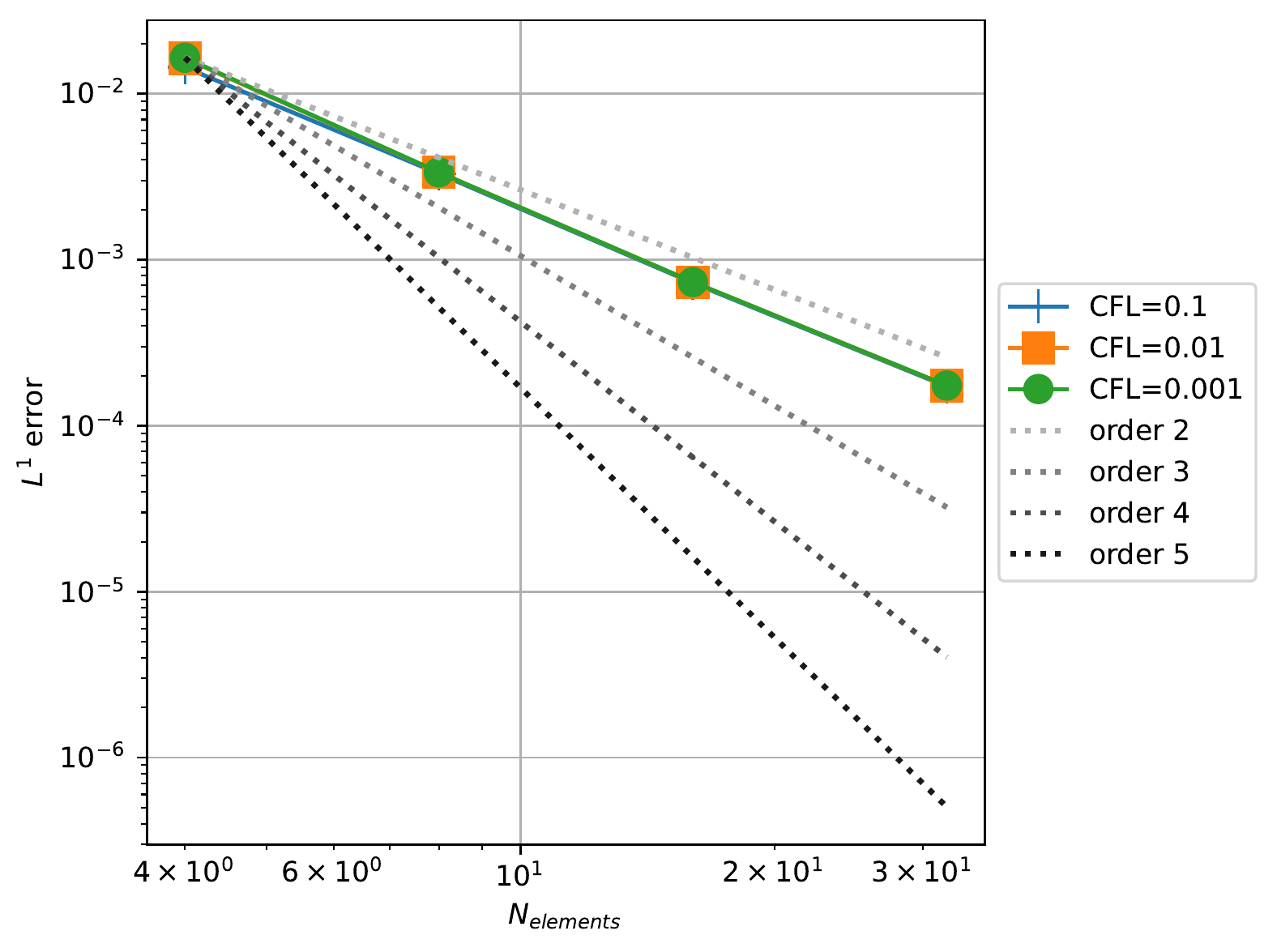}\caption{B3}\label{subfig:B3CFL}
\end{subfigure}\\
	\begin{subfigure}{0.49\textwidth}
		\includegraphics[width=\textwidth]{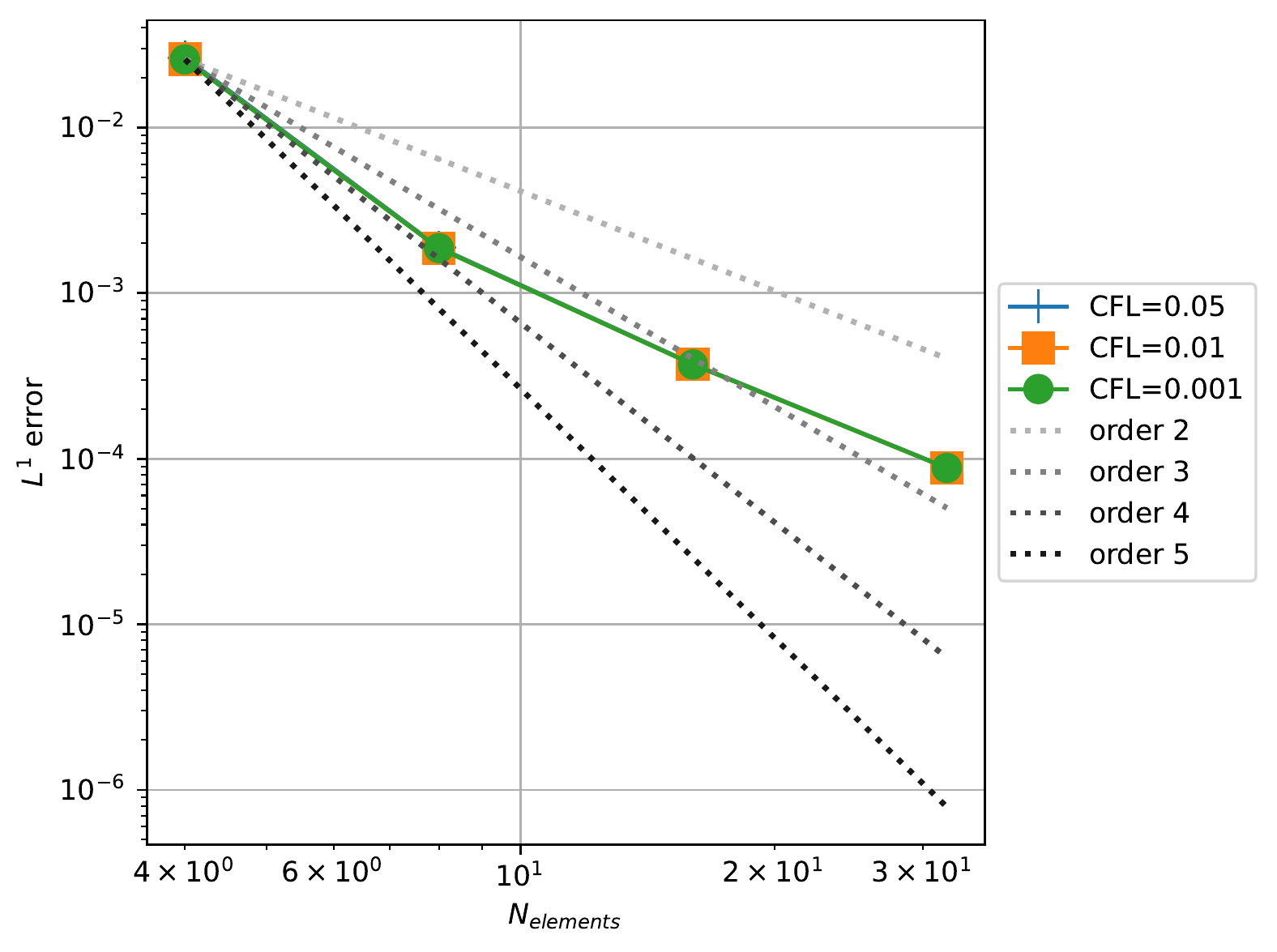}\caption{B4}\label{subfig:B4CFL}
	\end{subfigure}
\caption{1D LAE: tests with different CFLs}\label{fig:CFL}
\end{figure}

\subsubsection{Impact of the stabilization on higher order derivatives}

The CIP stabilization on the first derivative that we have presented can be actually generalized to keep into account higher order derivatives as in \cite{larson2020stabilization}

\begin{equation}
\uvec{ST}_i(\uapp)=\sum_{f\in \mathcal{F}_h}\sum_{r=1}^R \alpha^{\CIP}_{f,r} \int_f \Big\llbracket \nabla^r_{\nu_f} \varphi_i \Big\rrbracket \cdot \Big\llbracket \nabla^r_{\nu_f} \uvec{u}_h \Big\rrbracket  d\sigma(\uvec{x}),\quad \alpha^{\CIP}_{f,r}=\delta_r^{\CIP} \bar{\rho}_f h_f^{2r}     
\label{eq:CIP_HO}
\end{equation}
where $\mathcal{F}_h$ is the set of the $(D-1)$-dimensional faces shared by two elements of $\mathcal{T}_h$, $\nabla^r_{\nu_f}$ is the $r$-th partial derivative in the direction $\nu_f$ normal to the face $f$ and $\delta_r^{\CIP}$ are constant parameters which must be tuned.
We will focus on the stabilization of the first and second derivatives only, $R=2$.
The results obtained with $\delta^{\CIP}_1=0.00702$ and different values of $\delta^{\CIP}_2$ are displayed in \cref{fig:2ndDER}. For B3 and B4, the extra stabilization seems to help in decreasing the errors but still it is not sufficient to achieve the right order of accuracy.

\begin{figure}
\centering
	\begin{subfigure}{0.49\textwidth}
		\includegraphics[width=\textwidth]{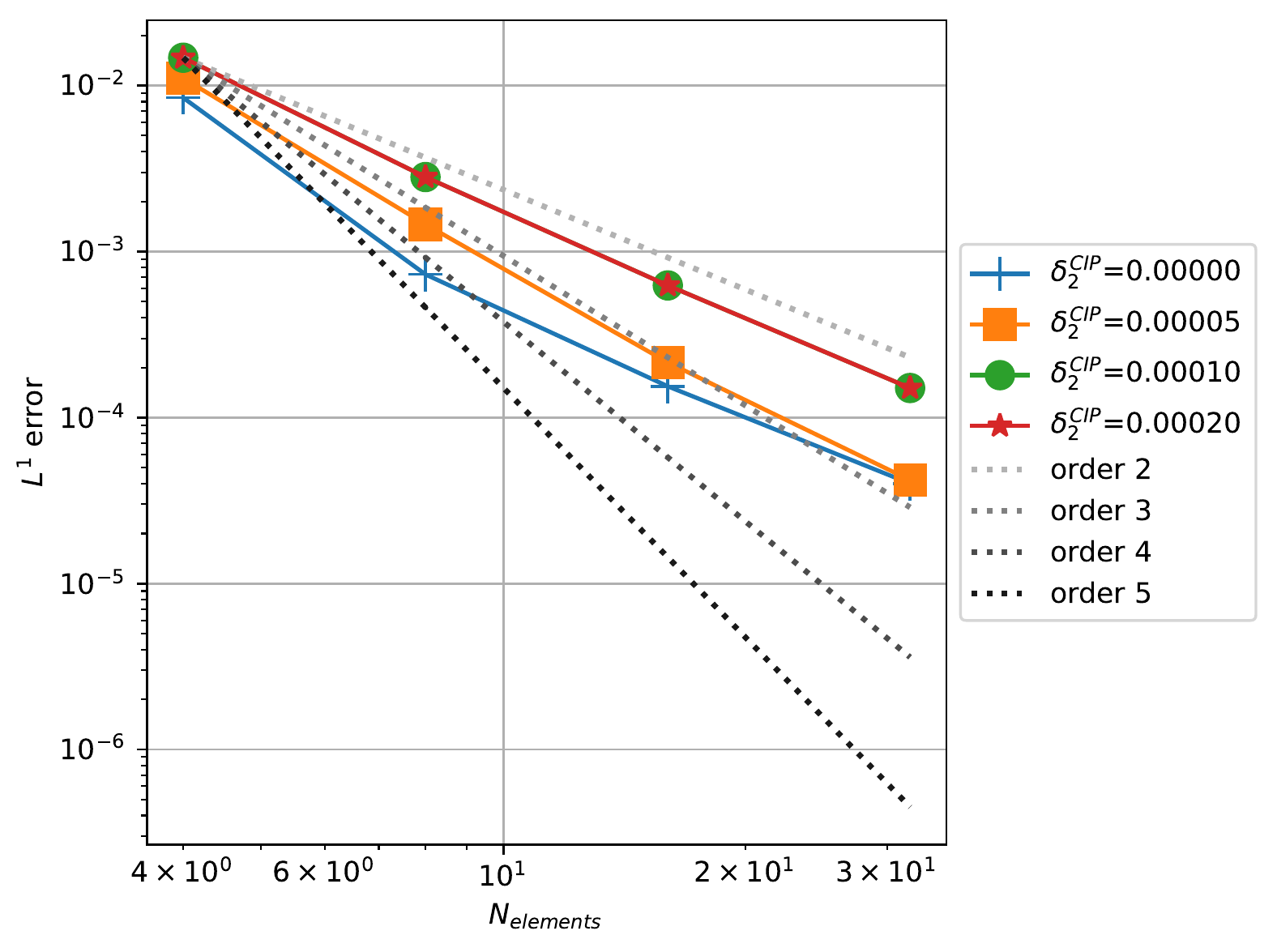}\caption{P3.}\label{subfig:P32ndDER}
	\end{subfigure}
	\begin{subfigure}{0.49\textwidth}
	\includegraphics[width=\textwidth]{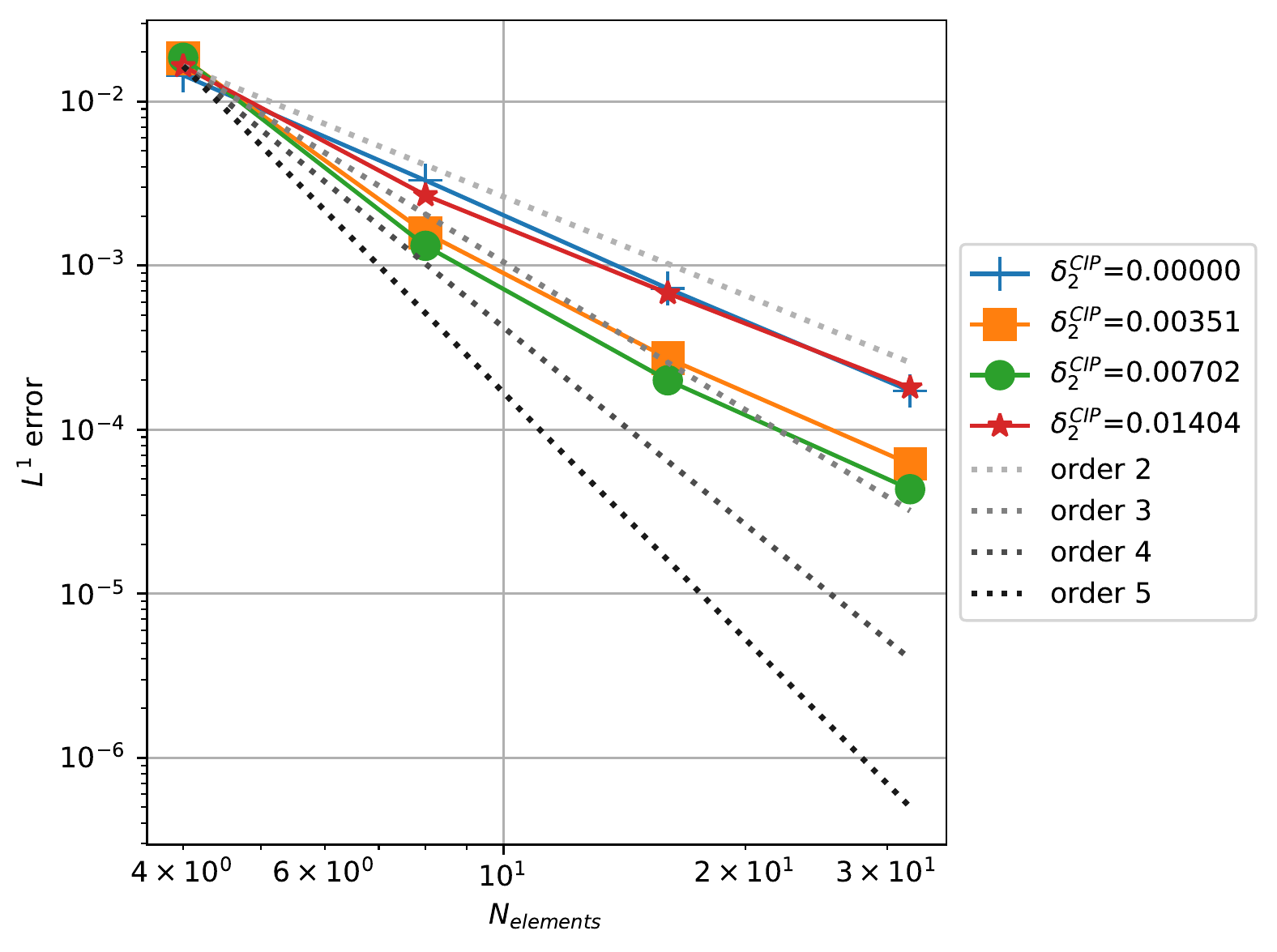}\caption{B3.}\label{subfig:B32ndDER}
\end{subfigure}\\
	\begin{subfigure}{0.49\textwidth}
		\includegraphics[width=\textwidth]{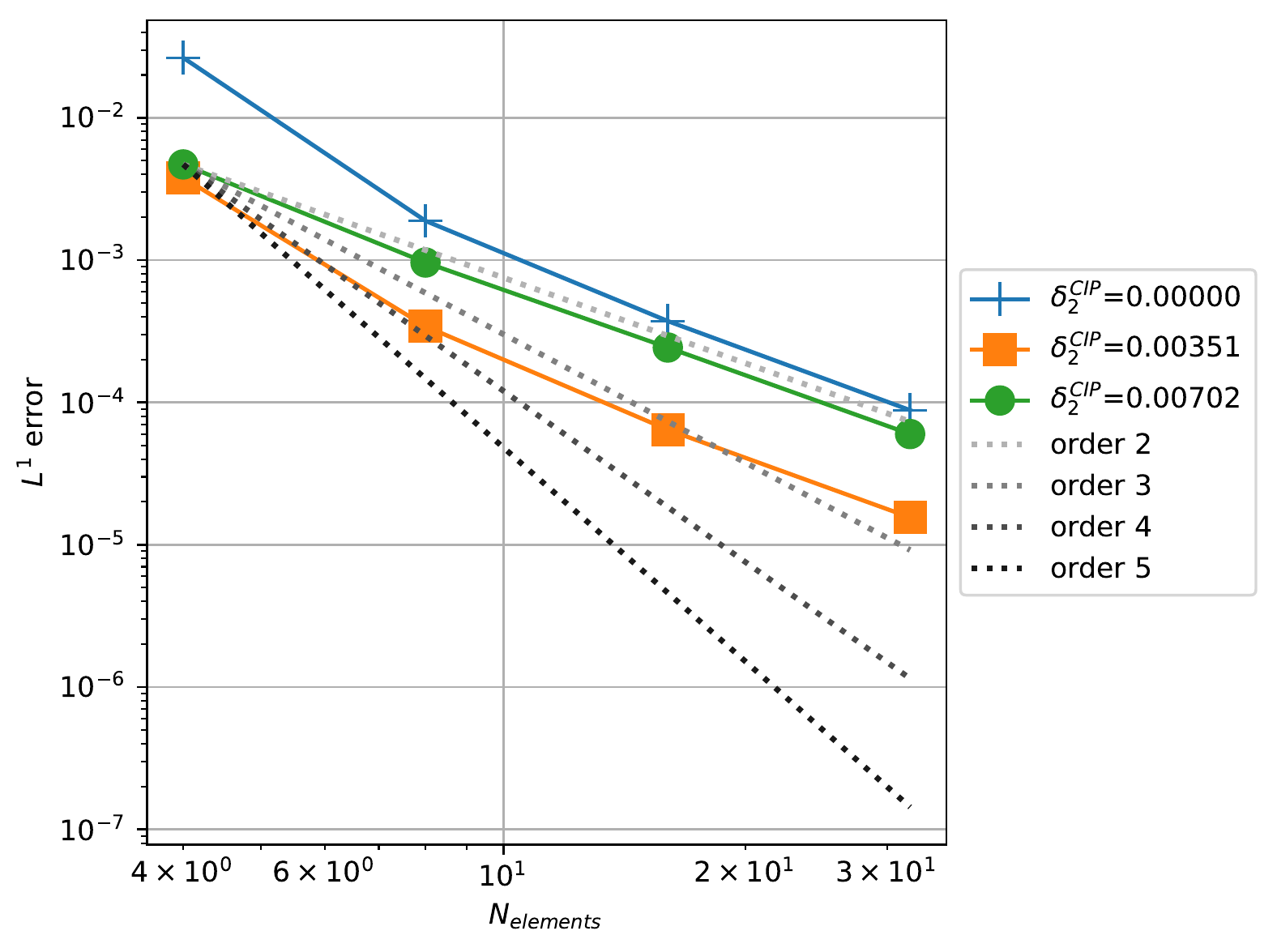}\caption{B4.}\label{subfig:B42ndDER}
	\end{subfigure}
\caption{1D LAE: tests with different stabilization parameters on the second derivative.}\label{fig:2ndDER}
\end{figure}

\subsubsection{Final remarks}
The conclusion of the previous analysis is that, in the context of unsteady problems, with the optimal number of iterations one obtains second order accuracy. 
Among the three aspects numerically analyzed, only the first one seems to have an effect on the order of accuracy; in particular, many more iterations than expected are needed to reach the formal order.
For the moment we do not further investigate this issue, but we have other ideas on how to proceed.
First of all, an analysis of the combination of the three parameters studied above could give better results and a linear stability/dispersion analysis, in the style of \cite{michel2021spectral,michel2022spectral}, can help in determining the optimal setting to achieve the best possible results. 
Further, higher order derivatives stabilization terms could be taken in consideration hoping for a better stabilization, this has been suggested also in \cite{abgrall2019high,michel2022spectral}. More in general, other stabilizations other than CIP and OSS could be considered.
Moreover, the authors suspect that \cref{ass:poincare} is not verified by the approximations and a stronger estimate on the $H^1$ norm of the solution of the discrete problem should be provided with weaker hypotheses to guarantee the accuracy results.

We conclude this section remarking that the mentioned problem does not occur with cubature elements also in the DeC framework, which provide accurate and fast results.

\section{Vibrating system}\label{sec:vibrating}
Let us consider a general sinusoidal function
\begin{equation}
x(t)=X\cos{(\Omega t +\varphi)},
\label{eq:singeneral}
\end{equation}
then we refer to $X\in \mathbb{R}^+_0$ as the amplitude, to $\Omega\in \mathbb{R}^+$ as the frequency and to $\varphi \in [0,2\pi[$ modulo $2\pi n$ with $n\in \mathbb{Z}$ as the phase.

Let us introduce two general sinusoidal functions
\begin{align}
	x_j(t)=X_j\cos{(\Omega t +\varphi_j)},\quad  \text{ for } j=1,2,
	\label{eq:sinf}
\end{align}
characterized by the same frequency $\Omega>0$, amplitudes $X_1,X_2\geq 0$ and phases $\varphi_1,\varphi_2 \in [0,2\pi[$ modulo $2n\pi$ with $n\in \mathbb{Z}$.

\begin{proposition}
The sum $x_s(t)=x_1(t)+x_2(t)$ between two sinusoidal functions with the same frequency $\Omega$ is another sinusoidal function with the same frequency.
\end{proposition}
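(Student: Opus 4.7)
The plan is to reduce the identity $x_1(t)+x_2(t) = X_s\cos(\Omega t + \varphi_s)$ to the existence of suitable $X_s \geq 0$ and $\varphi_s\in[0,2\pi[$ through the cosine addition formula. First, I would expand each $x_j(t)$ as
\[
x_j(t) = X_j\cos\varphi_j \cos(\Omega t) - X_j\sin\varphi_j \sin(\Omega t),\quad j=1,2,
\]
and, by summing, rewrite
\[
x_s(t) = A\cos(\Omega t) - B\sin(\Omega t),\quad A:=X_1\cos\varphi_1+X_2\cos\varphi_2,\quad B:=X_1\sin\varphi_1+X_2\sin\varphi_2,
\]
where $A$ and $B$ are constants that do not depend on $t$. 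Notice that the frequency $\Omega$ is already preserved: both sinusoids share the same argument $\Omega t$, so no new frequency can be produced by a linear combination.

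Next, I would look for $X_s\geq 0$ and $\varphi_s$ such that $X_s\cos\varphi_s = A$ and $X_s\sin\varphi_s = B$, which, after expanding $X_s\cos(\Omega t + \varphi_s)$ with the cosine addition formula, would match the previous expression termwise. Squaring and summing these two equations gives the natural candidate $X_s = \sqrt{A^2+B^2}$, and, when $X_s>0$, the phase $\varphi_s$ is uniquely determined modulo $2\pi$ by the prescribed pair $(\cos\varphi_s,\sin\varphi_s) = (A/X_s, B/X_s)$, which lies on the unit circle by construction. Substituting back, I obtain $x_s(t) = X_s\cos(\Omega t + \varphi_s)$, which is of the form \eqref{eq:singeneral} with the same frequency $\Omega$, proving the thesis.

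The only genuinely delicate point, and arguably the main obstacle to a clean statement, is the degenerate situation $A=B=0$, which can occur when $X_1=X_2$ and the two phases differ by $\pi$. In that case $x_s\equiv 0$, and I would handle it by allowing $X_s=0$ with an arbitrary choice of $\varphi_s\in[0,2\pi[$, consistently with the convention adopted for \eqref{eq:singeneral}. A slicker alternative that I would mention, avoiding any trigonometric manipulation, is the complex-exponential route: write $x_j(t) = \mathrm{Re}\bigl(X_j e^{i\varphi_j} e^{i\Omega t}\bigr)$, so that $x_s(t) = \mathrm{Re}(Z e^{i\Omega t})$ with $Z := X_1 e^{i\varphi_1}+X_2 e^{i\varphi_2}$; then $X_s = |Z|$ and $\varphi_s = \arg Z$ (with the same caveat when $Z=0$) deliver the desired sinusoidal form with unchanged frequency.
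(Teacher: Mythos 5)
Your proof is correct and follows essentially the same route as the paper: the same expansion into $A\cos(\Omega t)-B\sin(\Omega t)$ with the identical $A$ and $B$, the same recovery of $X_s=\sqrt{A^2+B^2}$ and $\varphi_s$ from the point $(A,B)$, and the same separate treatment of the degenerate case $x_s\equiv 0$. The complex-exponential alternative you mention is, in effect, the phasor formalism the paper develops immediately after this proposition.
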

\begin{proof}
If $x_1(t)+x_2(t) = 0$ or at least one between $X_1$ or $X_2$ is zero, then the proof is straightforward so let us focus on the case in which $x_1(t)+x_2(t) \neq 0$ and both $X_1$ and $X_2$ are different from $0.$

From basic trigonometry, we have
\begin{align}
x_j(t)=X_j\cos{(\Omega t +\varphi_j)}=X_j[\cos{(\Omega t)}\cos{(\varphi_j)}-\sin{(\Omega t)}\sin{(\varphi_j)}],\quad \text{for }j=1,2,
\label{eq:sinfnew}
\end{align}
then
\begin{align}
&x_s(t) =x_1(t)+x_2(t)
=A\cos{(\Omega t)}-B\sin{(\Omega t)},\label{eq:sum_2}\\
&\text{with }A:=X_1\cos{(\varphi_1)}+X_2\cos{(\varphi_2)},\qquad 
B:=X_1\sin{(\varphi_1)}+X_2\sin{(\varphi_2)}.
\label{eq:AB}
\end{align}
We consider now the point $(A,B)\in \mathbb{R}^2$, different from $(0,0)$ by assumption, and the induced vector of length $X_s = \sqrt{A^2+B^2} $ and phase $\varphi_s  =\angle(A,B)$, so that $A=X_s\cos{(\varphi_s)}$ and $B=X_s \sin{(\varphi_s)}$.
By definition of such vector, \eqref{eq:sum_2} can be recast as
\begin{align}
x_1(t)+x_2(t)=X_s \cos{(\varphi_s)} \cos{(\Omega t)}-X_s \sin{(\varphi_s)} \sin{(\Omega t)}=X_s \cos{(\Omega t+\varphi_s)},
\end{align}
which completes the proof.
\end{proof}

We introduce now a bijection $\mathcal{S}$ from the quotient set of the sinusoidal functions with a fixed frequency $\Omega$ defined by $(X,\varphi)$, in which we identify all the functions characterized by $X=0$, onto the complex plane
\begin{align}
\mathcal{S}(x(t)) = \mathcal{S}(X,\varphi) = \begin{cases} Xe^{i\varphi} \quad &\text{if} \quad  X\neq 0\\
0 \quad &\text{if} \quad X= 0
\end{cases}.
\end{align}
The complex number $\overline{X}:=\mathcal{S}(x(t))$ is called phasor associated to the sinusoidal function $x(t)$.
\begin{proposition}
If we have two sinusoidal functions $x_1(t),x_2(t)$ with the same frequency $\Omega$ then the phasor $\overline{X}_s$ associated to the sum $x_s(t)$ of the two sinusoidal functions is the sum of the phasors $\overline{X}_1,\overline{X}_2$ associated to the single sinusoidal functions.
\end{proposition}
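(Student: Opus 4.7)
The plan is to reduce everything to the explicit formulas obtained in the preceding proposition and then to recognize the result via Euler's formula. First, I would dispatch the degenerate cases: if $X_1=0$ then $x_1\equiv 0$ and $\overline{X}_1=0$, so $x_s=x_2$ and $\overline{X}_s=\overline{X}_2=\overline{X}_1+\overline{X}_2$, and symmetrically for $X_2=0$. If instead $X_1\neq 0$ and $X_2\neq 0$ but $x_1+x_2\equiv 0$, then evaluating at $t=0$ and $t=\pi/(2\Omega)$ forces $A=B=0$ (in the notation of \eqref{eq:AB}), so $\overline{X}_1+\overline{X}_2=0=\overline{X}_s$.

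For the main case, in which $x_s\not\equiv 0$ and both amplitudes are strictly positive, I would invoke the previous proposition, which already constructs $x_s(t)=X_s\cos(\Omega t+\varphi_s)$ with $X_s=\sqrt{A^2+B^2}$, $A=X_s\cos\varphi_s$, $B=X_s\sin\varphi_s$, and
\begin{align*}
A=X_1\cos\varphi_1+X_2\cos\varphi_2, \qquad B=X_1\sin\varphi_1+X_2\sin\varphi_2.
\end{align*}
By definition of the phasor map $\mathcal{S}$ and Euler's formula, this gives
\begin{align*}
\overline{X}_s=X_s e^{i\varphi_s}=X_s\cos\varphi_s+i\,X_s\sin\varphi_s=A+iB.
\end{align*}

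Finally, I would rearrange the right-hand side by grouping the contributions of each sinusoid:
\begin{align*}
A+iB=\bigl(X_1\cos\varphi_1+iX_1\sin\varphi_1\bigr)+\bigl(X_2\cos\varphi_2+iX_2\sin\varphi_2\bigr)=X_1 e^{i\varphi_1}+X_2 e^{i\varphi_2}=\overline{X}_1+\overline{X}_2,
\end{align*}
which yields the thesis. The argument is essentially bookkeeping once the previous proposition is in hand; the only mildly delicate point is the treatment of the degenerate cases where the phasor is defined piecewise to be $0$, which is why I would address them explicitly at the outset rather than folding them into the general computation.
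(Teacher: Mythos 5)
Your proof is correct and follows essentially the same route as the paper: reduce to the representation $\overline{X}_1+\overline{X}_2=A+iB$ with $A,B$ from \eqref{eq:AB} and identify this with the phasor $X_s e^{i\varphi_s}$ constructed in the preceding proposition, treating the degenerate cases (a vanishing amplitude, or $x_1+x_2\equiv 0$) separately. Your handling of the case $x_1+x_2\equiv 0$ by evaluating at $t=0$ and $t=\pi/(2\Omega)$ to force $A=B=0$ is a slightly more direct justification than the paper's appeal to $X_2=X_1$, $\varphi_2=\varphi_1+\pi$, but the substance is the same.
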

\begin{proof}
The phasors related to the sinusoidal functions \eqref{eq:sinf} are given by
\begin{align}
\overline{X}_j &=X_je^{i\varphi_j}=X_j\left[\cos{(\varphi_j)}+i\sin{(\varphi_j)}\right],\text{ for }j=1,2.
\label{eq:sinfbis}
\end{align}
If one between $X_1$ or $X_2$ is zero then the proof is straightforward therefore we focus on the case in which they are both different from $0.$ Further, we assume for the moment that $x_1(t)+x_2(t)\neq 0$.
The sum of the phasors gives
\begin{align}
\begin{split}
\overline{X}_r &=\overline{X}_1+\overline{X}_2\\
               &=\left[X_1\cos{(\varphi_1)}+X_2\cos{(\varphi_2)}\right]+i\left[X_1\sin{(\varphi_1)}+X_2\sin{(\varphi_2)}\right] = A+iB
\end{split}
\label{eq:sumfas}
\end{align}
with $A$ and $B$ defined exactly as in \eqref{eq:AB} leading to
\begin{align}
\overline{X}_r=X_r e^{i\varphi_r}
\end{align}
with $X_r=X_s$ and $\varphi_r=\varphi_s$ with $X_s$ and $\varphi_s$ defined from the phasor associated to $x_s(t)$.

If $x_1(t)+x_2(t)= 0$, by simple considerations, we must have $X_2=X_1$ and $\varphi_2=\varphi_1+\pi$ modulo $2\pi$, which leads to
\begin{align}
\overline{X}_1 =X_1e^{i\varphi_1},\quad \overline{X}_2 =X_1e^{i\left(\varphi_1+\pi\right)}=-\overline{X}_1.
\label{eq:if0}
\end{align}
Then, we clearly have $\overline{X}_1+\overline{X}_2=0$. Indeed, also the phasor $\overline{X}_s$ associated to the sum is $0$ and this completes the proof.
\end{proof}

It is clear that if we have a sinusoidal function $x(t)=X\cos{(\Omega t +\varphi)}$ then its derivative in time is still a sinusoidal function with the same frequency 
\begin{align}
x'(t) &=-\Omega X\sin{(\Omega t +\varphi)} =\Omega X\cos{\left(\Omega t +\varphi+ \frac{\pi}{2}\right)}.
\label{eq:derivativesin}
\end{align}
Then the phasor $\overline{X'}$ associated to the derivative in time $x'(t)$ is 
\begin{align}
\overline{X'} &=\Omega Xe^{i\left(\varphi+ \frac{\pi}{2}\right)}=i\Omega Xe^{i\varphi}=i\Omega \overline{X}.
\label{eq:firstderivativesin}
\end{align}
By the same argument we have that the phasor $\overline{X''}$ associated to the second derivative in time $x''(t)$ is
\begin{align}
\overline{X''} &=i\Omega \overline{X'}=i\Omega(i\Omega \overline{X})=-\Omega^2 \overline{X}.
\label{eq:secondderivativesin}
\end{align}
We consider the scalar ODE
\begin{align}
\begin{cases}
& my''+ry'+ky=F\cos(\Omega t +\varphi),\quad t \in \mathbb{R}^+_0\\
& y(0)=A, \\
& y'(0)=B,
\end{cases}
\label{eq:1DoF}
\end{align}
with the real nonnegative constants $m,k,\Omega>0$ and $r,F \geq 0$ with $\varphi\in [0,2\pi[$ modulo $2\pi n$ with $n\in \mathbb{Z}$.
The solution to \eqref{eq:1DoF} is given by
\begin{align}
y(t)=y_h(t)+y_p(t)
\label{eq:1DoFsol}
\end{align}
where $y_h(t)$ is a solution to the homogeneus equation and $y_p(t)$ is a solution to the whole equation.

We first focus on the homogeneus problem
\begin{equation}
 my''+ry'+ky=0
\end{equation}
and we look for a solution in the form $y(t)=Ae^{\lambda t}$ which is nontrivial and so we assume $A\neq 0$. We substitute it in the homogeneus equation and we get 
\begin{equation}
\left(m\lambda^2+r\lambda+k\right)Ae^{\lambda t}=0
\end{equation}
and since $Ae^{\lambda t}\neq 0$ $\forall t \in \mathbb{R}^+_0$ because $A\neq 0$ then we get the characteristic equation
\begin{equation}
\lambda^2+\alpha\lambda+\beta=0
\label{eq:charequ}
\end{equation}
with $\alpha=\frac{r}{m}\geq 0$ and $\beta=\frac{k}{m}> 0.$ 
The roots are given by
\begin{equation}
\lambda_{1,2}=\frac{1}{2}\left(-\alpha\pm\sqrt{\alpha^2-4\beta}\right)
\end{equation}
and, depending on the parameters of the problem, we have three possibilities
\begin{enumerate}
\item $\lambda_{1}\neq \lambda_{2}$, real, negative and different if $\alpha>2\sqrt{\beta} \Leftrightarrow r> 2\sqrt{km};$
\item $\lambda_{1}=\lambda_{2}=\lambda$, real, negative and coincident if $\alpha=2\sqrt{\beta} \Leftrightarrow r= 2\sqrt{km};$
\item $\lambda_{1,2}=\alpha\pm i \omega$, complex and conjugate with negative real part if $\alpha<2\sqrt{\beta} \Leftrightarrow r< 2\sqrt{km}.$
\end{enumerate}
Thus, the solution to our homogeneous ODE is
\begin{align}
y_h(t)=\begin{cases}
C_1e^{\lambda_1 t}+C_2e^{\lambda_2 t}, & \text{if} \quad \alpha>2\sqrt{\beta} \Leftrightarrow r> 2\sqrt{km},\\ 
C_1e^{\lambda t}+C_2te^{\lambda t}, & \text{if} \quad \alpha=2\sqrt{\beta} \Leftrightarrow r= 2\sqrt{km},\\
e^{-\frac{\alpha}{2} t}\left(C_1cos(\omega t)+C_2sin(\omega t)\right), & \text{if} \quad \alpha<2\sqrt{\beta} \Leftrightarrow r< 2\sqrt{km}.
\end{cases} 
\label{eq:solhomo}
\end{align}
Now, we focus on the whole ODE \eqref{eq:1DoF}
and we assume a sinusoidal solution of the type $y_p=Y_p\cos(\Omega t + \psi)$, we substitute it in \eqref{eq:1DoF} and we solve the equation in the space of the phasors.
Recalling the expression of the phasors associated to the first and the second derivatives of a sinusoidal function given by \eqref{eq:firstderivativesin} and \eqref{eq:secondderivativesin} we have
\begin{equation}
-m\Omega^2\overline{Y}_p +i\Omega r \overline{Y}_p+k \overline{Y}_p=F e^{i\varphi}.
\label{eq:phasorscomplete1DoF}
\end{equation}
Then
\begin{equation}
\overline{Y}_p=\frac{F e^{i\varphi}}{-m\Omega^2+k+i\Omega r},
\end{equation}
from which we get
\begin{align}\label{eq:sol}
Y_p &=\frac{F}{\sqrt{(-m\Omega^2+k)^2+\Omega^2 r^2}} ,\qquad 
\psi =\varphi-\arg{(-m\Omega^2+k+i\Omega r)},
\end{align}
where by $\arg{(\cdot)}$ we denote the phase of the argument up to $2n\pi$ with $n\in \mathbb{Z}$.
Once we compute $\overline{Y}_p$, we automatically get the unique associated sinusoidal function $y_p(t)=Y_p \cos{(\Omega t +\psi)}$.

So, the final solution to our ODE \eqref{eq:1DoF} is $y(t)=y_h(t)+y_p(t)$, where $y_h(t)$ is given by \eqref{eq:solhomo} and $y_p(t)$ is a sinusoidal function whose amplitude and phase are given by  \eqref{eq:sol}.

The two constants $C_1$ and $C_2$ in $y_h(t)$ are computed by imposing the initial conditions $y(0)=A$ and $y'(0)=B$ and solving the resulting $2$ by $2$ linear system.

\bibliography{sn-bibliography}

\begin{thebibliography}{10}

\bibitem{abgrall2006residual}
R{\'e}mi Abgrall.
\newblock Residual distribution schemes: current status and future trends.
\newblock {\em Computers \& Fluids}, 35(7):641--669, 2006.

\bibitem{Decremi}
R{\'e}mi Abgrall.
\newblock High order schemes for hyperbolic problems using globally continuous
  approximation and avoiding mass matrices.
\newblock {\em J. Sci. Comput.}, 73(2-3):461--494, 2017.

\bibitem{abgrall2019high}
R{\'e}mi Abgrall, Paola Bacigaluppi, and Svetlana Tokareva.
\newblock High-order residual distribution scheme for the time-dependent
  {Euler} equations of fluid dynamics.
\newblock {\em Computers \& Mathematics with Applications}, 78(2):274--297,
  2019.

\bibitem{abgrall2022staggered}
R{\'{e}}mi Abgrall and Ksenya Ivanova.
\newblock Staggered residual distribution scheme for compressible flow.
\newblock {\em arXiv}, 2111.10647, 2022.

\bibitem{abgrall2021relaxation}
R\'emi Abgrall, \'Elise Le~M\'el\'edo, Philipp \"Offner, and Davide Torlo.
\newblock Relaxation {Deferred} {Correction} {Methods} and their {Applications}
  to {Residual} {Distribution} {Schemes}.
\newblock {\em The SMAI Journal of computational mathematics}, 8:125--160,
  2022.

\bibitem{abgrall2020high}
R{\'e}mi Abgrall and Davide Torlo.
\newblock High order asymptotic preserving deferred correction
  implicit-explicit schemes for kinetic models.
\newblock {\em SIAM Journal on Scientific Computing}, 42(3):B816--B845, 2020.

\bibitem{Bacigaluppi2023}
Paola Bacigaluppi, R{\'{e}}mi Abgrall, and Svetlana Tokareva.
\newblock ``{A posteriori}'' limited high order and robust schemes for
  transient simulations of fluid flows in gas dynamics.
\newblock {\em Journal of Computational Physics}, 476:111898, 2023.

\bibitem{boscarino2016error}
Sebastiano Boscarino and Jing-Mei Qiu.
\newblock Error estimates of the integral deferred correction method for stiff
  problems.
\newblock {\em ESAIM: Mathematical Modelling and Numerical Analysis},
  50(4):1137--1166, 2016.

\bibitem{boscarino2018implicit}
Sebastiano Boscarino, Jing-Mei Qiu, and Giovanni Russo.
\newblock Implicit-explicit integral deferred correction methods for stiff
  problems.
\newblock {\em SIAM Journal on Scientific Computing}, 40(2):A787--A816, 2018.

\bibitem{butcher2016numerical}
John~Charles Butcher.
\newblock {\em Numerical methods for ordinary differential equations}.
\newblock John Wiley \& Sons, Auckland, 2016.

\bibitem{cheli2015advanced}
Federico Cheli and Giorgio Diana.
\newblock {\em Advanced dynamics of mechanical systems}.
\newblock Springer, Cham, 2015.

\bibitem{christlieb2009comments}
Andrew Christlieb, Benjamin Ong, and Jing-Mei Qiu.
\newblock Comments on high-order integrators embedded within integral deferred
  correction methods.
\newblock {\em Communications in Applied Mathematics and Computational
  Science}, 4(1):27--56, 2009.

\bibitem{christlieb2010integral}
Andrew Christlieb, Benjamin Ong, and Jing-Mei Qiu.
\newblock Integral deferred correction methods constructed with high order
  {Runge--Kutta} integrators.
\newblock {\em Mathematics of Computation}, 79(270):761--783, 2010.

\bibitem{CIALLELLA2022105630}
M.~Ciallella, L.~Micalizzi, P.~\"{O}ffner, and D.~Torlo.
\newblock An arbitrary high order and positivity preserving method for the
  shallow water equations.
\newblock {\em Computers \& Fluids}, page 105630, 2022.

\bibitem{cohen:hal-01010373}
Gary Cohen, Patrick Joly, Jean~E. Roberts, and Nathalie Tordjman.
\newblock {Higher order triangular finite elements with mass lumping for the
  wave equation.}
\newblock {\em {SIAM Journal on Numerical Analysis}}, 38(6):2047--2078, 2001.

\bibitem{Decoriginal}
Alok Dutt, Leslie Greengard, and Vladimir Rokhlin.
\newblock Spectral deferred correction methods for ordinary differential
  equations.
\newblock {\em BIT}, 40(2):241--266, 2000.

\bibitem{fox1949some}
Leslie Fox and ET~Goodwin.
\newblock Some new methods for the numerical integration of ordinary
  differential equations.
\newblock In {\em Mathematical Proceedings of the Cambridge Philosophical
  Society}, volume~45, pages 373--388. Cambridge University Press, 1949.

\bibitem{han2021dec}
Maria Han~Veiga, Philipp {\"O}ffner, and Davide Torlo.
\newblock {Dec} and {Ader}: similarities, differences and a unified framework.
\newblock {\em Journal of Scientific Computing}, 87(1):1--35, 2021.

\bibitem{huang2006accelerating}
Jingfang Huang, Jun Jia, and Michael Minion.
\newblock Accelerating the convergence of spectral deferred correction methods.
\newblock {\em Journal of Computational Physics}, 214(2):633--656, 2006.

\bibitem{jund2007arbitrary}
S{\'e}bastien Jund and St{\'e}phanie Salmon.
\newblock {Arbitrary High-Order Finite Element Schemes and High-Order Mass
  Lumping}.
\newblock {\em International Journal of Applied Mathematics \& Computer
  Science}, 17(3):375--393, 2007.

\bibitem{ketcheson2014comparison}
David Ketcheson and Umair bin Waheed.
\newblock A comparison of high-order explicit {Runge--Kutta}, extrapolation,
  and deferred correction methods in serial and parallel.
\newblock {\em Communications in Applied Mathematics and Computational
  Science}, 9(2):175--200, 2014.

\bibitem{layton2004conservative}
Anita~T Layton and Michael~L Minion.
\newblock Conservative multi-implicit spectral deferred correction methods for
  reacting gas dynamics.
\newblock {\em Journal of Computational Physics}, 194(2):697--715, 2004.

\bibitem{layton2005implications}
Anita~T Layton and Michael~L Minion.
\newblock Implications of the choice of quadrature nodes for {Picard} integral
  deferred corrections methods for ordinary differential equations.
\newblock {\em BIT Numerical Mathematics}, 45(2):341--373, 2005.

\bibitem{liu2008strong}
Yuan Liu, Chi-Wang Shu, and Mengping Zhang.
\newblock Strong stability preserving property of the deferred correction time
  discretization.
\newblock {\em Journal of Computational Mathematics}, 26(5):633--656, 2008.

\bibitem{micalizzi2022efficient}
Lorenzo Micalizzi, Davide Torlo, and Walter Boscheri.
\newblock Efficient iterative arbitrary high order methods: an adaptive bridge
  between low and high order.
\newblock {\em arXiv}, 2212.07783, 2022.

\bibitem{michel2021spectral}
Sixtine Michel, Davide Torlo, Mario Ricchiuto, and R{\'e}mi Abgrall.
\newblock Spectral analysis of continuous {FEM} for hyperbolic {PDEs}:
  influence of approximation, stabilization, and time-stepping.
\newblock {\em Journal of Scientific Computing}, 89(2):1--41, 2021.

\bibitem{michel2022spectral}
Sixtine Michel, Davide Torlo, Mario Ricchiuto, and R{\'e}mi Abgrall.
\newblock Spectral analysis of high order continuous fem for hyperbolic pdes on
  triangular meshes: influence of approximation, stabilization, and
  time-stepping.
\newblock {\em Journal of Scientific Computing}, 94(3):49, 2023.

\bibitem{minion2011hybrid}
Michael Minion.
\newblock A hybrid parareal spectral deferred corrections method.
\newblock {\em Communications in Applied Mathematics and Computational
  Science}, 5(2):265--301, 2011.

\bibitem{minion2003semi}
Michael~L Minion.
\newblock Semi-implicit spectral deferred correction methods for ordinary
  differential equations.
\newblock {\em Communications in Mathematical Sciences}, 1(3):471--500, 2003.

\bibitem{minion2004semi}
Michael~L Minion.
\newblock Semi-implicit projection methods for incompressible flow based on
  spectral deferred corrections.
\newblock {\em Applied numerical mathematics}, 48(3-4):369--387, 2004.

\bibitem{offner2020arbitrary}
Philipp {\"O}ffner and Davide Torlo.
\newblock Arbitrary high-order, conservative and positivity preserving
  {Patankar}-type deferred correction schemes.
\newblock {\em Applied Numerical Mathematics}, 153:15--34, 2020.

\bibitem{mPDeC}
Philipp \"{O}ffner and Davide Torlo.
\newblock Arbitrary high-order, conservative and positivity preserving
  {P}atankar-type deferred correction schemes.
\newblock {\em Appl. Numer. Math.}, 153:15--34, 2020.

\bibitem{pasquetti2018cubature}
Richard Pasquetti and Francesca Rapetti.
\newblock Cubature points based triangular spectral elements: An accuracy
  study.
\newblock {\em Journal of Mathematical Study}, 51(1):15--25, 2018.

\bibitem{ricchiuto2010explicit}
Mario Ricchiuto and Remi Abgrall.
\newblock Explicit {Runge--Kutta} residual distribution schemes for time
  dependent problems: second order case.
\newblock {\em Journal of Computational Physics}, 229(16):5653--5691, 2010.

\bibitem{vort}
Mario Ricchiuto and Davide Torlo.
\newblock Analytical travelling vortex solutions of hyperbolic equations for
  validating very high order schemes.
\newblock {\em arXiv}, 2109.10183, 2021.

\bibitem{speck2015multi}
Robert Speck, Daniel Ruprecht, Matthew Emmett, Michael Minion, Matthias Bolten,
  and Rolf Krause.
\newblock A multi-level spectral deferred correction method.
\newblock {\em BIT Numerical Mathematics}, 55(3):843--867, 2015.

\bibitem{torlo2020hyperbolic}
Davide Torlo.
\newblock {\em Hyperbolic problems: high order methods and model order
  reduction}.
\newblock PhD thesis, University Zurich, 2020.

\bibitem{wanner1996solving}
Gerhard Wanner and Ernst Hairer.
\newblock {\em Solving ordinary differential equations II: Stiff and
  Differential-Algebraic Problems}, volume 375.
\newblock Springer Berlin Heidelberg, Berlin, 1996.

\end{thebibliography}


\begin{thebibliography}{1}

\bibitem{Decremi}
R{\'e}mi Abgrall.
\newblock High order schemes for hyperbolic problems using globally continuous
  approximation and avoiding mass matrices.
\newblock {\em J. Sci. Comput.}, 73(2-3):461--494, 2017.

\bibitem{abgrall2019high}
R{\'e}mi Abgrall, Paola Bacigaluppi, and Svetlana Tokareva.
\newblock High-order residual distribution scheme for the time-dependent
  {Euler} equations of fluid dynamics.
\newblock {\em Computers \& Mathematics with Applications}, 78(2):274--297,
  2019.

\bibitem{abgrall2020high}
R{\'e}mi Abgrall and Davide Torlo.
\newblock High order asymptotic preserving deferred correction
  implicit-explicit schemes for kinetic models.
\newblock {\em SIAM Journal on Scientific Computing}, 42(3):B816--B845, 2020.

\bibitem{larson2020stabilization}
Mats~G Larson and Sara Zahedi.
\newblock Stabilization of high order cut finite element methods on surfaces.
\newblock {\em IMA Journal of Numerical Analysis}, 40(3):1702--1745, 2020.

\bibitem{nonsubmitted}
Lorenzo Micalizzi, Mario Ricchiuto, and R{\'e}mi Abgrall.
\newblock Novel well-balanced arbitrary high order continuous interior penalty
  stabilization techniques for continuous galerkin fem and residual
  distribution.
\newblock {\em in preparation}, 2022.

\bibitem{michel2021spectral}
Sixtine Michel, Davide Torlo, Mario Ricchiuto, and R{\'e}mi Abgrall.
\newblock Spectral analysis of continuous {FEM} for hyperbolic {PDEs}:
  influence of approximation, stabilization, and time-stepping.
\newblock {\em Journal of Scientific Computing}, 89(2):1--41, 2021.

\bibitem{michel2022spectral}
Sixtine Michel, Davide Torlo, Mario Ricchiuto, and R{\'e}mi Abgrall.
\newblock Spectral analysis of high order continuous fem for hyperbolic pdes on
  triangular meshes: influence of approximation, stabilization, and
  time-stepping.
\newblock {\em Journal of Scientific Computing}, 94(3):49, 2023.

\end{thebibliography}
\bibliographystyle{plain}

\end{document}